\documentclass{article}
\usepackage{graphicx}
\usepackage{cite}
\IfFileExists{authblk.sty}{\usepackage{authblk}}{}
\usepackage{amsthm}
\usepackage{amsmath}
\usepackage{amssymb}
\usepackage{mathrsfs}
\usepackage{enumitem}
\usepackage[left=3cm, right=3cm, top=1.0in, bottom=1.0in]{geometry}
\usepackage[colorlinks=true]{hyperref}

\allowdisplaybreaks[4]
\newtheorem{theorem}{Theorem}[section]
\newtheorem{lemma}[theorem]{Lemma}
\newtheorem{proposition}[theorem]{Proposition}

\newtheorem{definition}[theorem]{Definition}
\newtheorem{remark}[theorem]{Remark}

\newcommand{\R}{\mathbb R}

\newcommand{\Sym}{\operatorname{Sym}}

\newcommand{\tr}{\operatorname{tr}}
\newcommand{\rank}{\operatorname{rank}}
\newcommand{\diag}{\operatorname{diag}}

\newcommand{\dd}{\,\mathrm d}

\numberwithin{equation}{section}

\date{}
\title{Strict Convexity and Sharp Power Concavity for a Graphical
$\sigma_2$-Curvature Equation}
\author{Shuning Xu}
\begin{document}

\maketitle
\begin{abstract}
We prove strict convexity of the square-root transformation
\(v=-\sqrt{-u}\) for admissible solutions of a graphical
\(\sigma_2\)-curvature Dirichlet problem on smooth uniformly convex
domains. A key ingredient is a constant-rank theorem for \(D^2v\),
proved by a direct Ma--Xu type argument in dimension three and by
the Bian--Guan microscopic convexity principle together with
inverse-convexity methods in arbitrary dimensions. Combined with
boundary strict convexity and a domain-deformation argument, the
constant-rank theorem yields \(D^2v>0\) throughout the domain.
\end{abstract}

\noindent\textbf{Keywords:}
strict convexity; power concavity; graphical \(\sigma_2\)-curvature equation;
constant rank theorem; inverse convexity; optimal exponent.

\section{Introduction}
Convexity and concavity properties of solutions to elliptic equations
form a classical theme in nonlinear analysis. A central question is to
identify a transformation $\Phi(u)$ of the solution which enjoys a
global convexity or concavity property in a convex domain. One of the
earliest sharp results is the theorem of Makar--Limanov \cite{ML71},
who proved the square-root concavity of the torsion function in planar
convex domains. Brascamp--Lieb \cite{BrascampLieb1976} subsequently
established the log-concavity of the first Dirichlet eigenfunction on
convex domains. Caffarelli--Spruck \cite{CS82} developed PDE methods
for proving convexity properties of solutions to classical variational
problems, while Korevaar \cite{Korevaar1983} introduced a general
concavity maximum principle for nonlinear elliptic and parabolic
equations. Kennington \cite{Ken85} further developed this approach into
a systematic theory of power concavity for semilinear Dirichlet
problems. These results established the viewpoint that the natural
convexity of a solution is often revealed only after a nonlinear
transformation such as a logarithm or a fractional power.

A complementary route to strict convexity is to combine an initial
weak convexity property with a rigidity principle that prevents
degeneration of the Hessian in the interior. Constant-rank theorems
provide precisely such a mechanism and, when combined with deformation
arguments, allow weak convexity to be upgraded to strict convexity.
Caffarelli--Friedman \cite{CF85} developed this approach for semilinear
elliptic equations in dimension two, using a deformation argument
together with the strong maximum principle and a constant-rank property
of the Hessian to obtain convexity results. Korevaar--Lewis \cite{KL87}
subsequently extended the constant-rank theory to higher dimensions and
to a broader class of elliptic equations, further establishing
constant-rank rigidity as an effective tool in convexity problems.
From a different direction, Alvarez--Lasry--Lions \cite{ALL97}
developed a viscosity and convex-envelope approach to convexity for
fully nonlinear equations and introduced an inverse-matrix convexity
condition that later became fundamental in the structural theory of
constant-rank and microscopic convexity principles.

For fully nonlinear geometric equations, the interaction between
convexity, constant rank, and deformation became particularly
effective. Guan--Ma \cite{GM03} developed this strategy for the
Christoffel--Minkowski Hessian equation. It was subsequently extended
to prescribed Weingarten curvature equations by Guan--Lin--Ma
\cite{GLM06} and to related admissible solutions by Guan--Ma--Zhou
\cite{GMZ06}. Caffarelli--Guan--Ma \cite{CGM07} established a general
constant-rank theorem for fully nonlinear elliptic equations and, in
particular, for the second fundamental form of convex hypersurfaces
satisfying prescribed curvature equations. For the three-dimensional
$\sigma_2$-Hessian equation, Ma--Xu \cite{MX08} obtained a direct
constant-rank argument and combined it with a deformation procedure to
prove convexity. Bian--Guan \cite{BG09,BG10} later formulated the
microscopic convexity principle under a general structural
inverse-convexity condition, providing a unified framework for many
such constant-rank results.

More recently, Li–Ma–Salani \cite{LMS26} developed a
hyperbolic-polynomial approach to the inverse-convexity structure
arising in the $\sigma_2$ problem. Their result provides an
algebraic mechanism that allows the Bian–Guan principle to be applied
without the dimension-three third-order calculation.  These developments provide the
structural tools for the constant-rank argument used
below.

The graphical $\sigma_2$-curvature equation considered here lies beyond the standard Hessian setting, since the curvature operator depends nonlinearly on both $D^2u$ and $Du$. Consequently, the existing constant-rank and convexity results for Hessian equations do not apply directly. Our goal is to establish global strict convexity for the natural square-root transformation and to determine the corresponding optimal power-concavity exponent.

A geometric model particularly relevant to the present work is the
mean-curvature-type equation
$H[u]=\omega^{-3}$, where
$H[u]=\operatorname{div}(Du/\sqrt{1+|Du|^2})$ and
$\omega=\sqrt{1+|Du|^2}$. The corresponding constant-rank phenomenon belongs
to the classical framework of Korevaar--Lewis and its later extensions by
Bian--Guan \cite{KL87,BG09,BG10}. This equation also has a natural geometric
interpretation. If
$\nu=(-Du,1)/\omega$ is the upward unit normal to the graph, then
$\omega^{-1}=\langle\nu,e_{n+1}\rangle$ is its vertical angle function, and
hence the equation can be written, up to the choice of orientation, as
$H=\langle\nu,e_{n+1}\rangle^3$. Translating graphs for flows by powers of
mean curvature satisfy an equation of this type; in particular, the exponent
$-3$ corresponds to the translating equation for the $H^{1/3}$-flow
\cite{JJ11}. This mean-curvature model motivates the question considered
below: whether an analogous constant-rank structure persists when the first
elementary symmetric curvature $\sigma_1(\kappa)=H$ is replaced by the
fully nonlinear curvature $\sigma_2(\kappa)$.

Let $\Omega\subset\R^n$ and let
\[
\Sigma_u=\{(x,u(x)):x\in\Omega\}\subset\R^{n+1}
\]
be the graph of a smooth function. Set
\[
\omega=\sqrt{1+|Du|^2}.
\]
Let $\kappa[u]=(\kappa_1[u],\ldots,\kappa_n[u])$ denote the principal curvatures of the graph. We study
\begin{equation}\label{eq:real-main}
\sigma_2(\kappa[u])=(1+|Du|^2)^{-2}=\omega^{-4},\,u<0 \,in\,\Omega,
\, u=0 \, on\,\partial\Omega.
\end{equation}
The graph is called $\Gamma_2$-admissible when its principal-curvature vector belongs to the G\aa rding cone $\Gamma_2$ at every point.

Our square-root transformation is
\begin{equation}\label{eq:transform-intro}
v=-\sqrt{-u}<0,\qquad u=-v^2.
\end{equation}
The exponent $-4$ is distinguished by this transformation. For the more general family
\[
\sigma_2(\kappa[u])=\omega^b,
\]
we shall prove that the transformed equation has right-hand side
\[
\frac14\bigl(1+4v^2|Dv|^2\bigr)^{(b+4)/2}.
\]
Thus $b=-4$ is precisely the exponent for which the right-hand side becomes constant. This parallels the distinguished exponent $-3$ in the mean-curvature equation, and the constant-rank results below may be viewed as a fully nonlinear $\sigma_2$ extension of the corresponding mean-curvature-type constant-rank theory \cite{KL87,BG09,BG10}; see Remark~\ref{rem:mean-curvature}.

Our main result is a strict-convexity theorem for the square-root transformation \(v=-\sqrt{-u}\).
\begin{theorem}\label{thm:real-strict-convexity}
Let \(n\ge 3\), and let \(\Omega\subset\mathbb R^n\) be a bounded,
smooth, uniformly convex domain. Let
$u\in C^\infty(\Omega)\cap C^{1,1}(\overline{\Omega})$
be such that its graph is \(\Gamma_2\)-admissible in \(\Omega\),
\(u\) satisfies \eqref{eq:real-main} in \(\Omega\), and
$u=0\,\,\text{on }\partial\Omega.$
Set   
$v=-\sqrt{-u},$
then
$u<0\,\,\text{in }\Omega,$
and \(v\) is strictly convex in \(\Omega\).
\end{theorem}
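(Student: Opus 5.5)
The plan follows the abstract: continuity/deformation combined with a constant-rank theorem and boundary strict convexity.

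\emph{Step 1 (sign and transformed equation).} Admissibility gives $\sigma_1(\kappa)>0$, so $u$ is subharmonic-type: the mean-curvature operator is elliptic with positive right-hand side. The strong maximum principle with $u=0$ on $\partial\Omega$ then gives $u<0$ in $\Omega$, and $v=-\sqrt{-u}$ is smooth in $\Omega$. Next I substitute $u=-v^2$, so that $Du=-2vDv$ and $D^2u=-2vD^2v-2Dv\otimes Dv$. Writing the graph curvature operator in terms of $v$ reduces \eqref{eq:real-main} to a fully nonlinear equation
\[
F(D^2v,Dv,v)=\tfrac14\bigl(1+4v^2|Dv|^2\bigr)^{(b+4)/2}\Big|_{b=-4}=\tfrac14 .
\]
Its right-hand side is constant. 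I will check that $F$ is elliptic on the admissible set and that the relevant inverse-type function $A\mapsto F(A^{-1},p,v)$ satisfies the Bian--Guan inverse-convexity condition (locally convex in $(A,v)$ for fixed $p$). This is where the hyperbolic-polynomial structure of Li--Ma--Salani enters.

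\emph{Step 2 (boundary strict convexity).} Near $\partial\Omega$ we have $-u\sim d(x)\,|Du|$ with $|Du|>0$ by the Hopf lemma. Hence $v\approx -\sqrt{c\,d}$, and $D^2v$ blows up positively in the normal direction. Tangentially, $v_{\tau\tau}$ involves the boundary curvature multiplied by $|Dv|\to\infty$. Uniform convexity of $\Omega$ therefore gives $D^2v>0$ in a collar $\{d<\delta\}$. This needs the $C^{1,1}$ bound and a Hopf-type estimate $|Du|\ge c>0$ on $\partial\Omega$.

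\emph{Step 3 (constant rank and deformation).} Deform $\Omega_t$, $t\in[0,1]$, through smooth uniformly convex domains from a ball $\Omega_0$ to $\Omega_1=\Omega$. On the ball, the rotationally symmetric solution is obtained by an ODE and is checked directly to have $v$ strictly convex. I assume existence and uniqueness of admissible solutions $u_t$ depending continuously in $C^2_{loc}$, with uniform boundary collar estimates from Step 2. Let $T$ be the set of $t$ for which $D^2v_t>0$ in $\Omega_t$; it is nonempty and open. For closedness, the limit $v_{t^*}$ is convex with $D^2v_{t^*}\ge0$. The constant-rank theorem (Ma--Xu computation for $n=3$, Bian--Guan microscopic convexity in general) shows that $D^2v_{t^*}$ has constant rank in $\Omega_{t^*}$. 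Since it is positive definite in the boundary collar, the rank is $n$ everywhere, so $t^*\in T$. Hence $T=[0,1]$.

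\emph{Main obstacle.} The hardest part is verifying the structural inverse-convexity hypothesis for the transformed $\sigma_2$ curvature operator, which depends nonlinearly on $Dv$ through the metric $\delta_{ij}+u_iu_j$. I would first diagonalize at a point with $Dv$ aligned to $e_1$. I would then write $F$ as $\sigma_2$ of $\omega^{-1}(I-\frac{Du\otimes Du}{\omega^2})^{1/2}D^2u(\cdots)^{1/2}$, and show that, after the $v$-substitution, the map $A\mapsto -F(A^{-1})$ is convex via the hyperbolicity of $\sigma_2$ in the direction of the identity. The constancy of the right-hand side at $b=-4$ is exactly what prevents lower-order terms in $v$ from breaking convexity. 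Uniqueness and continuity along the deformation is a secondary concern, to be handled by the comparison principle for admissible solutions.
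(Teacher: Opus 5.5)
Your overall architecture matches the paper's. It consists of the transformed equation with constant right-hand side at $b=-4$, Korevaar-type boundary convexity via the Hopf lemma, the radial solution on the ball, and a deformation whose closedness step combines constant rank with the boundary collar. The genuine gap is in the step you yourself call the main obstacle.

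Write the equation as $F(D^2v,Dv,v)=0$ with $F(R,p,z)=z^2\sigma_2(R)+zT_1(R)[p,p]+4z^4T_2(R)[p,p]-\tfrac14$. The Bian--Guan structure condition cannot be obtained by proving that $A\mapsto F(A^{-1},p,z)$, or its negative, is convex. In general this function is neither convex nor concave.

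\begin{itemize}
\item Already for $p=0$ and $A=aI$ it equals $\binom{n}{2}z^2a^{-2}-\tfrac14$. Its Hessian in $(a,z)$ has determinant $-4{\binom{n}{2}}^{2}z^2a^{-6}<0$. So the joint convexity in $(A,v)$ that you set as the goal fails for the function itself.
\item For $p\neq0$, the term $zT_1(A^{-1})[p,p]=-|z|\,|p|^2\tr(QA^{-1})$, with $Q=I-\alpha\otimes\alpha$ and $\alpha=p/|p|$, is concave in $A$. It can destroy convexity even at fixed $z$. For instance, take $n=3$, $z=-1$, $p=e_3$ and $A=\diag(10/53,10,9/5)$. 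This is an admissible configuration on the zero level set, yet along the slice $\diag(a_1,10,a_3)$ the Hessian of $F(A^{-1},p,z)$ has negative determinant.
\end{itemize}

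Hyperbolicity of $\sigma_2$ in the identity direction controls neither the rank-one shift $-p\otimes p$ nor the graphical $T_2$-term, and it says nothing about the $z$-dependence. Since the Ma--Xu reduction you cite is specific to $n=3$, for $n\ge4$ everything rests on this step.

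What is true, and what the paper proves, is convexity of the \emph{sublevel set} $\{(A,z):F(A^{-1},p,z)\le0\}$. This is the level-set form of the Bian--Guan condition, and it needs ideas your plan lacks.

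\begin{enumerate}
\item Because the right-hand side is constant, the scaling $B=A/(-z)$ removes $z$ from the $\sigma_2$ and $T_1$ terms.
\item For $p\neq0$, dividing by $\tr(QB^{-1})>0$ turns the condition into $\mathcal H_p(B,z)\le|p|^2$.
\item $\mathcal H_p$ is the Li--Ma--Salani quotient $\frac{\sigma_2(B^{-1})-1/4}{\tr(QB^{-1})}$, which is convex, plus $4|p|^2z^2\Theta(B)$.
\item Here $\Theta=\frac{T_2(B^{-1})[\alpha,\alpha]}{\tr(QB^{-1})}=\frac12\frac{D_Qg}{g}$ with $g=D_Q\det$. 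Thus $\Theta$ is convex by the hyperbolic log-derivative principle and homogeneous of degree $-1$.
\item By the perspective lemma, $a^2\Theta(B)$ is then jointly convex.
\item Convexity is transported back to $(A,z)$ by the reweighting $\mu_i=\theta a_i/a_\theta$, together with monotonicity of $\mathcal H_p$ in $a=-z$.
\end{enumerate}

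This quotient-plus-perspective mechanism is the missing idea, and it cannot be reached from $\sigma_2$-hyperbolicity alone.

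A secondary gap is that you assume solvability along the deformation, continuous dependence, and a collar width uniform in $t$. Your Step 2 gives a collar only for each fixed solution, with width depending on its Hopf constant and its $C^2$ norm up to the boundary. Making these uniform is the content of Proposition~\ref{prop:real-solvability}. It uses barriers, the gradient maximum principle, boundary $C^2$ estimates, and the Guan--Ren--Wang curvature estimate. That last estimate is needed because the right-hand side $\langle\nu,e_{n+1}\rangle^4$ depends on the normal. These uniform bounds also supply the $C^2$ convergence used in both the openness and closedness steps.
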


The proof of Theorem~\ref{thm:real-strict-convexity} combines a new
constant-rank argument with a global deformation procedure. Compared
with the standard $\sigma_2$-Hessian problem of Ma--Xu \cite{MX08},
the transformed graphical equation contains additional
gradient-dependent Newton-tensor terms. In dimension three these terms can still be handled by a Ma--Xu type reduction to a positive quadratic form, whereas in higher dimensions we use the Bian--Guan microscopic convexity principle. The latter requires extending the inverse-convexity structure used by Li--Ma--Salani \cite{LMS26} to accommodate the additional graphical $T_2$-term.

The constant-rank theorem provides only the interior rigidity needed
for the global argument. We first obtain strict convexity near the
boundary and construct a strictly convex radial solution on the unit
ball, and then deform the ball to the prescribed domain through a
Minkowski family with uniform a priori estimates. At the closedness
step, the constant-rank theorem prevents degeneration of the limiting weakly convex solution, while boundary strict convexity forces the Hessian to have full rank. This yields
$D^2(-\sqrt{-u})>0$ throughout the domain.

The remainder of the paper is organized as follows. Section~\ref{sec:prelim} collects the basic definitions and structural tools used in the proofs. In Section~\ref{sec:proofs}, we derive the transformed equations and prove the constant-rank results in dimension three and in arbitrary dimensions, and we also discuss the corresponding mean-curvature equation and complete the domain-deformation proof of strict convexity. In Section~\ref{sec:square-root-optimality}, we construct counterexamples to show the power concavity exponent is optimal.

\section*{Acknowledgments}
The author thanks Professor Xi-Nan Ma for bringing this question to her attention. This work was supported by the National Natural Science Foundation of China [grant number 2025YFA1017601].

\section{Preliminaries and structural tools}\label{sec:prelim}

\subsection{Elementary symmetric functions and Newton tensors}

\begin{definition}
\label{def:sigma-garding}
Let $M$ be a real symmetric $N\times N$ matrix with eigenvalues
$\lambda(M)=(\lambda_1,\ldots,\lambda_N)$. For $0\le k\le N$, the $k$-th
elementary symmetric function is defined by
\begin{equation}\label{eq:sigmak}
\sigma_0(M)=1,
\qquad
\sigma_k(M)
=
\sum_{1\le i_1<\cdots<i_k\le N}
\lambda_{i_1}\cdots\lambda_{i_k}.
\end{equation}
For $1\le k\le N$, the $k$-th G\aa rding cone is
\begin{equation}\label{eq:Gammak}
\Gamma_k
=
\left\{
\lambda\in\R^N:
\sigma_j(\lambda)>0,\quad 1\le j\le k
\right\}.
\end{equation}
We write $M\in\Gamma_k$ if $\lambda(M)\in\Gamma_k$.

In particular,
\begin{equation}\label{eq:sigma2trace}
\sigma_1(M)=\tr M,
\qquad
\sigma_2(M)
=
\frac12\left((\tr M)^2-\tr(M^2)\right),
\end{equation}
and
\begin{equation}\label{eq:Gamma2}
\Gamma_2
=
\left\{
\lambda\in\R^N:
\sigma_1(\lambda)>0,\ 
\sigma_2(\lambda)>0
\right\}.
\end{equation}
\end{definition}

\begin{definition}
\label{def:newton-tensors}
For $0\le k\le N$, the $k$-th Newton tensor associated with $M$ is
\begin{equation}\label{eq:newton-general}
T_k(M)
=
\sigma_k(M)I-\sigma_{k-1}(M)M+\cdots+(-1)^kM^k,
\end{equation}
where $T_0(M)=I$. Equivalently,
\begin{equation}\label{eq:newton-recursion}
T_k(M)=\sigma_k(M)I-MT_{k-1}(M).
\end{equation}
The Newton tensors satisfy the differential identity
\begin{equation}\label{eq:newton-derivative}
D\sigma_{k+1}(M)[H]
=
\tr\bigl(T_k(M)H\bigr).
\end{equation}
See, for example, Reilly \cite{Reilly77} for the Newton-transformation
formalism.

In the cases used below,
\begin{equation}\label{eq:T1T2}
T_1(M)=(\tr M)I-M,
\qquad
T_2(M)=\sigma_2(M)I-(\tr M)M+M^2.
\end{equation}
Hence, for a real vector $q$,
\begin{align}
T_1(M)[q,q]
&=
|q|^2\tr M-q^*Mq,
\label{eq:T1-explicit-main}\\
T_2(M)[q,q]
&=
\sigma_2(M)|q|^2
-(\tr M)q^*Mq
+q^*M^2q.
\label{eq:T2-explicit-main}
\end{align}
For $M\in\Gamma_k$, one has
\begin{equation}\label{eq:newton-positive}
T_{k-1}(M)>0.
\end{equation}
In particular,
\begin{equation}\label{eq:T1-positive}
M\in\Gamma_2
\quad\Longrightarrow\quad
T_1(M)>0.
\end{equation}
These are standard properties of the $k$-Hessian operator; see
\cite{CNS85}.
\end{definition}

\begin{lemma}
\label{lem:rank-one-perturbation}
Let $M$ be a real symmetric $N\times N$ matrix and let $q$
be a real vector. Then, for $1\le k\le N$ and every $t\in\R$,
\begin{equation}\label{eq:rankone-general}
\sigma_k(M+tqq^{T})
=
\sigma_k(M)
+t\,T_{k-1}(M)[q,q].
\end{equation}
In particular,
\begin{equation}\label{eq:rankone1}
\sigma_2(M+tqq^{T})
=
\sigma_2(M)
+tT_1(M)[q,q].
\end{equation}
Moreover,
\begin{equation}\label{eq:rankone2}
T_2(M+tqq^{T})[q,q]
=
T_2(M)[q,q].
\end{equation}
The $\sigma_2$ identity \eqref{eq:rankone1} is also used explicitly in
Chen--Li--Ma \cite[Eq.~(3.2)]{CLM26}.
\end{lemma}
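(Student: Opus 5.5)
The three claims are purely algebraic, and I would prove all of them from the characteristic polynomial. Polynomial identities in $t$ that hold on an open set of symmetric matrices extend everywhere, so we may assume at any stage that $M$ is invertible or diagonal.

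\textbf{Step 1: the general rank-one formula \eqref{eq:rankone-general}.} Start from the generating identity
\[
\det(I+sM)=\sum_{k=0}^N \sigma_k(M)s^k .
\]
Apply it to $M+tqq^T$. When $I+sM$ is invertible, the matrix determinant lemma gives
\[
\det(I+sM+stqq^T)=\det(I+sM)\bigl(1+st\,q^T(I+sM)^{-1}q\bigr).
\]
Next I would use the adjugate $\operatorname{adj}(I+sM)=\det(I+sM)(I+sM)^{-1}$. The identity to establish is
\[
\operatorname{adj}(I+sM)=\sum_{j\ge0}T_j(M)s^j .
\]
Check it by multiplying the right-hand side by $(I+sM)$ and using the recursion \eqref{eq:newton-recursion}. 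The coefficient of $s^{j}$ is $T_j+MT_{j-1}=\sigma_j I$, so the product equals $\det(I+sM)I$, as required. The series terminates because $T_N(M)=0$ by Cayley--Hamilton. Substituting back,
\[
\det(I+s(M+tqq^T))=\sum_k\sigma_k(M)s^k+t\sum_k T_{k-1}(M)[q,q]s^k .
\]
Comparing coefficients of $s^k$ gives \eqref{eq:rankone-general}. The linearity in $t$ is exactly the rank-one structure. Taking $k=2$ gives \eqref{eq:rankone1}.

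\textbf{Step 2: the invariance \eqref{eq:rankone2}.} Write $M_t=M+tqq^T$. By \eqref{eq:T2-explicit-main},
\[
T_2(M_t)[q,q]=\sigma_2(M_t)|q|^2-(\tr M_t)\,q^TM_tq+|M_tq|^2 .
\]
Set $a=|q|^2$, $b=q^TMq$, $c=|Mq|^2$, and expand each term in $t$.
\begin{itemize}
\item By Step 1, $\sigma_2(M_t)=\sigma_2(M)+t(a\tr M-b)$.
\item From $\tr M_t=\tr M+ta$ and $q^TM_tq=b+ta^2$, the middle term contributes $-(\tr M)b-t(a^2\tr M+ab)-t^2a^3$.
\item Since $M_tq=Mq+taq$, we get $|M_tq|^2=c+2tab+t^2a^3$.
\end{itemize}
Collecting powers of $t$:
\begin{itemize}
\item the $t^2$ terms are $-a^3+a^3=0$;
\item the $t$ terms are $a^2\tr M-ab-a^2\tr M-ab+2ab=0$.
\end{itemize}
Only the $t$-independent part survives, namely $T_2(M)[q,q]$.

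\textbf{Main obstacle.} The only step that takes care is the adjugate expansion in Step 1, including the sign and index conventions. I would double-check it against the definition \eqref{eq:newton-general} in the diagonal case $M=\diag(\lambda_i)$. There $T_{k-1}(M)$ is diagonal with entries $\sigma_{k-1}(\lambda|i)$, the elementary symmetric function of the $\lambda_j$ with $j\ne i$. With this check, formula \eqref{eq:rankone-general} reduces to the familiar expansion of $\sigma_k$ in one distinguished variable.
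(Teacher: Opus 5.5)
Your proof is correct. Step 1 follows the paper, but for \eqref{eq:rankone2} you take a different route.

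\textbf{Step 1.} This is essentially the paper's argument. The paper writes the matrix determinant lemma directly in adjugate form,
$\det(I+s(M+tqq^T))=\det(I+sM)+st\,q^T\operatorname{adj}(I+sM)q$,
and quotes the Newton expansion $\operatorname{adj}(I+sM)=\sum_{j=0}^{N-1}T_j(M)s^j$. You go further and verify this expansion from the recursion \eqref{eq:newton-recursion} together with Cayley--Hamilton, which gives $MT_{N-1}=\sigma_N I$.

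\textbf{The invariance \eqref{eq:rankone2}.} Here the paper expands nothing. It applies \eqref{eq:rankone-general} with $k=3$ at the two base points $M$ and $M+tqq^T$. Then $\sigma_3(M+(t+h)qq^T)$ is affine in $h$ with slope $T_2(M)[q,q]$ from one expansion and slope $T_2(M+tqq^T)[q,q]$ from the other, so the two must agree. The case $N=2$ is treated separately, since $T_2\equiv0$ there.

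You instead expand $\sigma_2(M_t)|q|^2-(\tr M_t)\,q^TM_tq+|M_tq|^2$ in powers of $t$. I checked the cancellation of the $t$ and $t^2$ coefficients, and it is right.

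\textbf{Comparison.} Your computation uses only the $k=2$ case of Step 1 and works in every dimension without a case split. The cost is that it is tied to the explicit formula for $T_2$. The paper's slope argument is shorter and gives at once the general invariance $T_{k-1}(M+tqq^T)[q,q]=T_{k-1}(M)[q,q]$ for every $k\le N$. Extending your direct expansion to higher Newton tensors would need increasingly heavy bookkeeping.
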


\begin{proof}
Using the matrix determinant lemma in adjugate form,
\begin{align}
\det\bigl(I+s(M+tqq^*)\bigr)
&=
\det(I+sM)
+st\,q^*\operatorname{adj}(I+sM)q.
\label{eq:rankone-det}
\end{align}
On the other hand,
\begin{equation}\label{eq:det-generating}
\det(I+sM)
=
\sum_{j=0}^N \sigma_j(M)s^j,
\end{equation}
while the Newton expansion of the adjugate is
\begin{equation}\label{eq:adj-newton}
\operatorname{adj}(I+sM)
=
\sum_{j=0}^{N-1}T_j(M)s^j.
\end{equation}
Comparing the coefficient of $s^k$ in
\eqref{eq:rankone-det} gives
\[
\sigma_k(M+tqq^*)
=
\sigma_k(M)
+tT_{k-1}(M)[q,q],
\]
which proves \eqref{eq:rankone-general}.

For $N\ge3$, applying \eqref{eq:rankone-general} to $\sigma_3$ gives
\[
\sigma_3(M+tqq^*)
=
\sigma_3(M)
+tT_2(M)[q,q].
\]
Replacing $M$ by $M+tqq^*$ and comparing the slope in the same rank-one
direction yields
\[
T_2(M+tqq^*)[q,q]
=
T_2(M)[q,q].
\]
If $N=2$, the same identity is automatic since $T_2(M)\equiv0$ by the
Cayley--Hamilton theorem.
\end{proof}

\subsection{Hyperbolic polynomials and inverse convexity}

We next collect the hyperbolic-polynomial and inverse-convexity tools
used in the constant-rank arguments. The hyperbolic-polynomial results
below provide the algebraic mechanism behind the inverse-convexity
theorems that will be used in the constant-rank argument below.

\begin{definition}\label{def:hyperbolic}
Let $V$ be a finite-dimensional real vector space and let $p$ be a homogeneous polynomial. The polynomial $p$ is \emph{hyperbolic with respect to} $e\in V$ if $p(e)>0$ and, for every $x\in V$, the polynomial
\[
t\longmapsto p(x+te)
\]
has only real roots. The connected component of $\{p>0\}$ containing $e$ is the hyperbolicity cone of $p$.
\end{definition}

The determinant on the real symmetric matrix space is hyperbolic with respect to the identity, and its hyperbolicity cone is the positive definite cone. We write
\[
D_Ep(x)=\left.\frac{\dd}{\dd t}\right|_{t=0}p(x+tE).
\]
We use three standard facts; see \cite{Garding59,BGLS01,Renegar06}.

\begin{theorem}\label{thm:garding}
Let $p$ be hyperbolic and let $E$ lie in its hyperbolicity cone. Then
\[
x\longmapsto\frac{p(x)}{D_Ep(x)}
\]
is concave on the hyperbolicity cone.
\end{theorem}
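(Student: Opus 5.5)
The plan is to reduce the statement to a concavity property of a symmetric function of the ``hyperbolic eigenvalues'', and then to invoke the Bauschke--G\"uler--Lewis--Sendov transfer principle \cite{BGLS01}. Let $m=\deg p$ and normalize so that $p(E)=1$. For $x\in V$, the one-variable polynomial $t\mapsto p(x+tE)$ has only real roots, since $E$ lies in the hyperbolicity cone and hyperbolicity with respect to $e$ transfers to every point of that cone by G\aa rding's theorem \cite{Garding59}. Writing these roots as $-\lambda_1(x),\dots,-\lambda_m(x)$ gives
\[
p(x+tE)=\prod_{i=1}^m\bigl(t+\lambda_i(x)\bigr).
\]
The hyperbolicity cone $\Lambda$ is exactly the set where all $\lambda_i(x)>0$.

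Expanding at $t=0$ gives $p(x)=\sigma_m(\lambda(x))$ and $D_Ep(x)=\sigma_{m-1}(\lambda(x))$. Hence on $\Lambda$
\[
\frac{p(x)}{D_Ep(x)}=f(\lambda(x)),\qquad f(\lambda)=\frac{\sigma_m(\lambda)}{\sigma_{m-1}(\lambda)}=\Bigl(\sum_{i=1}^m\lambda_i^{-1}\Bigr)^{-1}.
\]
The map $f$ is symmetric and, up to the factor $1/m$, it is the harmonic mean. So $f$ is positively homogeneous of degree one and concave on the positive orthant $\Gamma_m$. I will verify the concavity directly: by homogeneity it suffices to prove superadditivity. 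By Cauchy--Schwarz, for $a,b>0$,
\[
\frac{1}{a+b}\le\frac{\alpha^2}{a}+\frac{(1-\alpha)^2}{b}
\]
for every $\alpha\in[0,1]$, with equality for the optimal $\alpha$. Summing over $i$ with the common choice $\alpha=f(\lambda)/(f(\lambda)+f(\mu))$ yields $f(\lambda+\mu)\ge f(\lambda)+f(\mu)$.

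The concluding step is the BGLS theorem: if $f$ is symmetric and concave on $\Gamma_m$, then $f\circ\lambda$ is concave on $\Lambda$. This is the step I expect to be the real content, because it encodes the hyperbolic analogue of Lidskii's majorization $\lambda(x+y)\prec\lambda(x)+\lambda(y)$. Combined with Schur-concavity and monotonicity of $f$, that majorization gives
\[
f(\lambda(x+y))\ge f(\lambda(x)+\lambda(y))\ge f(\lambda(x))+f(\lambda(y)).
\]
This is superadditivity of $p/D_Ep$ on $\Lambda$, and together with degree-one homogeneity it yields concavity.

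If I wanted to avoid citing BGLS, I would restrict to the three-dimensional subspace spanned by $E,x,y$. Restricting to a subspace containing $E$ preserves hyperbolicity, and by the Helton--Vinnikov theorem the restricted polynomial has the form $\det(aI+bA+cB)$ with $A,B$ symmetric. The claim then reduces to concavity of $M\mapsto\det M/\tr(\operatorname{adj}M)=1/\tr(M^{-1})$ on the positive definite cone. That holds by the same superadditivity argument, using the variational formula
\[
\tr(M^{-1})=\sup_{X}\bigl(2\tr X-\tr(X^{*}MX)\bigr).
\]
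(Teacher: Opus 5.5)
Your argument is correct. It is more than the paper provides: the paper gives no proof of this statement and simply lists it among ``three standard facts'' with references to \cite{Garding59,BGLS01,Renegar06}.

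Your normalization $p(E)=1$ and the factorization $p(x+tE)=\prod_i(t+\lambda_i(x))$ identify $p/D_Ep$ with $\sigma_m/\sigma_{m-1}=(\sum_i\lambda_i^{-1})^{-1}$, evaluated at the hyperbolic eigenvalues relative to $E$. This splits the theorem into two parts. One is an elementary inequality on the positive orthant, and your Cauchy--Schwarz superadditivity computation proves it correctly. The other is a single genuinely hyperbolic input, the transfer principle of \cite{BGLS01}.

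Two small corrections to how you describe that input:
\begin{enumerate}[label=\textup{(\alph*)}]
\item The majorization $\lambda(x+y)\prec\lambda(x)+\lambda(y)$ is the Ky Fan-type statement, not Lidskii's. It follows from BGLS's sublinearity of the sum of the $k$ largest roots together with linearity of $\sum_i\lambda_i$. The hyperbolic Lidskii inequality is a deeper statement, usually derived from the Helton--Vinnikov theorem, and you do not need it.
\item Because the trace is linear, the majorization is full rather than weak. So symmetry and concavity of $f$ already give Schur-concavity on the positive orthant, and the appeal to monotonicity is unnecessary.
\end{enumerate}

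Your Helton--Vinnikov alternative is also valid, and the variational formula does give the required superadditivity of $1/\tr(M^{-1})$. In the context of this paper it has a nice feature: it reduces the statement to concavity of $M\mapsto 1/\tr(M^{-1})$ on $\Sym_{++}(m)$. This is the same kind of reciprocal-trace concavity of Alvarez--Lasry--Lions \cite{ALL97} that the paper invokes next to Theorem~\ref{thm:LMS}. The price is relying on the Lax conjecture, a much heavier theorem than the BGLS input.

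The paper's citation buys brevity. Your derivation shows why $E$ must lie in the open cone: there all $\lambda_i>0$, so $D_Ep>0$ and $f$ is concave. It also isolates exactly which deep fact is being used.
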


\begin{theorem}
\label{thm:derivhyper}
Let $p$ be a homogeneous polynomial of degree $d$ which is
hyperbolic with respect to $e$, and let $\Gamma_p$ denote its
hyperbolicity cone. If $E\in \Gamma_p$, then the directional
derivative
\[
D_Ep(x)
=
\left.\frac{d}{dt}\right|_{t=0}p(x+tE)
\]
is a homogeneous hyperbolic polynomial of degree $d-1$.
Moreover, $D_Ep$ may be taken to be hyperbolic with respect to
$e$, and its hyperbolicity cone contains the original one:
\[
\Gamma_p\subset \Gamma_{D_Ep}.
\]

More generally, if $E\in\overline{\Gamma_p}$ and
$D_Ep\not\equiv0$, the same conclusion holds by approximation:
one may take
\[
E_\varepsilon=E+\varepsilon e\in\Gamma_p,
\qquad \varepsilon>0,
\]
apply the preceding statement to $D_{E_\varepsilon}p$, and then
let $\varepsilon\downarrow0$.
\end{theorem}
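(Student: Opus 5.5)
We prove Theorem~\ref{thm:derivhyper} by reducing to one-variable real-rootedness along lines and interlacing of roots. Fix $x\in V$ and set $q(t)=p(x+te)$. Up to the positive factor $p(e)$, $q$ has only real roots:
\[
q(t)=p(e)\prod_{i=1}^d (t+\lambda_i(x)),
\]
where $\lambda_i(x)$ are the hyperbolic eigenvalues of $x$. The hyperbolicity cone $\Gamma_p$ is exactly the set where all $\lambda_i(x)>0$; this is G\aa rding's standard description, which we take from \cite{Garding59}. It makes $\Gamma_p$ an open convex cone.

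The first step treats the case $E=e$. Homogeneity gives $p(x+se)$ with derivative in $s$ equal to $(D_ep)(x+se)$, so
\[
D_ep(x+te)=q'(t).
\]
By Rolle's theorem $q'$ is real-rooted. Its degree is $d-1$, and its leading coefficient is $d\,p(e)>0$, so $D_ep(e)=dp(e)>0$. Hence $D_ep$ is hyperbolic with respect to $e$. Its roots interlace those of $q$. If all $\lambda_i(x)>0$, the roots of $q$ are all negative, and then so are the roots of $q'$. This gives $\Gamma_p\subset\Gamma_{D_ep}$.

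For general $E\in\Gamma_p$ I would use the known fact that $p$ is hyperbolic with respect to every $E\in\Gamma_p$, with the same cone. I would derive it from G\aa rding's theorem via a continuity and root-counting argument: move $e$ to $E$ inside the cone; real-rootedness persists because nonreal roots would have to appear in conjugate pairs crossing through a real double root, which can be excluded. Applying the case $E=e$ with $E$ as the base direction then shows that $D_Ep$ is hyperbolic with respect to $E$, with $\Gamma_p\subset\Gamma_{D_Ep}$. Since $e\in\Gamma_p\subset\Gamma_{D_Ep}$ and the cone is convex, the same transfer fact shows that $D_Ep$ is also hyperbolic with respect to $e$.

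For $E\in\overline{\Gamma_p}$ with $D_Ep\not\equiv0$, take $E_\varepsilon=E+\varepsilon e\in\Gamma_p$. Then
\[
D_{E_\varepsilon}p\to D_Ep
\]
coefficientwise. Real-rootedness of $t\mapsto D_{E_\varepsilon}p(x+te)$ passes to the limit by Hurwitz's theorem, since the limit polynomial is nonzero. The positivity $D_Ep(e)>0$ should come from
\[
D_Ep(e)=\sum_i \mu_i\prod_{j\ne i}\cdots,
\]
where $\mu_i\ge0$ are the eigenvalues of $E$ and not all $\mu_i$ vanish. A cleaner route is to note that $D_Ep(e)=D_ep(E)$ by symmetry of the polarization, and that $D_ep(E)>0$ because $E\in\overline{\Gamma_p}\subset\overline{\Gamma_{D_ep}}$ and $D_ep$ does not vanish there unless $D_Ep\equiv0$.

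The main obstacle is this last step, together with the transfer of the hyperbolicity direction within the cone. If the boundary positivity $D_Ep(e)>0$ resists the argument above, I would simply cite \cite{Renegar06}, where both facts are proved.
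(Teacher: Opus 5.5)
For $E\in\Gamma_p$ your argument is sound. It is the standard argument behind the references the paper cites; the paper itself gives no proof beyond citing G\aa rding, Bauschke--G\"uler--Lewis--Sendov and Renegar, plus the approximation remark inside the statement. You use Rolle's theorem for $E=e$, then G\aa rding's theorem that $p$ is hyperbolic in every direction of $\Gamma_p$ with the same cone. Citing G\aa rding there is fine. Your sketched justification of it would not work, though, because real multiple roots cannot be excluded: for $x=0$ the polynomial $t\mapsto p(tE')=t^dp(E')$ has a $d$-fold real root for every direction $E'$. The usual proof instead tracks the roots of $t\mapsto p(x+tE+ise)$ for $s>0$. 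These never meet the real axis because $p$ is hyperbolic in direction $e$.

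The genuine gap is in the boundary case, at the positivity $D_Ep(e)>0$. This is needed for hyperbolicity. It must also come \emph{before} your Hurwitz step: the leading coefficient of $t\mapsto D_Ep(x+te)$ is $D_Ep(e)$, so $D_Ep\not\equiv0$ alone does not make these one-variable limits nonzero.

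Your ``cleaner route'' fails for two reasons. First, write $\lambda=\lambda(E)$. Homogeneity gives $p(e+tE)=p(e)\prod_i(1+t\lambda_i)$ and $p(E+te)=p(e)\prod_i(t+\lambda_i)$. Hence $D_Ep(e)=p(e)\sigma_1(\lambda)$ but $D_ep(E)=p(e)\sigma_{d-1}(\lambda)$. In polarized form these are $dP(E,e,\dots,e)$ and $dP(e,E,\dots,E)$, which coincide in general only for $d=2$. Second, $D_ep$ can vanish on $\overline{\Gamma_p}$ while $D_Ep\not\equiv0$. For $p=x_1x_2x_3$, $e=(1,1,1)$, $E=(1,0,0)$ one has $D_ep(E)=0$, yet $D_Ep=x_2x_3$ and $D_Ep(e)=1$.

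Your first formula is the correct one: $D_Ep(e)=p(e)\sum_i\lambda_i(E)$ with all $\lambda_i(E)\ge0$. But you give no reason why $D_Ep\not\equiv0$ forces some $\lambda_i(E)>0$, and this is the missing idea. One way to supply it: if $\lambda(E)=0$, then also $\lambda(-E)=-\lambda(E)=0$. So $\pm E\in\overline{\Gamma_p}$, and $x+sE\in\Gamma_p$ for every $x\in\Gamma_p$ and $s\in\R$. By G\aa rding's concavity of $p^{1/d}$ on $\Gamma_p$, the map $s\mapsto p(x+sE)^{1/d}$ is then a positive concave function on all of $\R$, hence constant. Therefore $D_Ep$ vanishes on the open set $\Gamma_p$, and so identically.

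Finally, you never check $\Gamma_p\subset\Gamma_{D_Ep}$ in the boundary case; the limit alone gives only $D_Ep\ge0$ on $\Gamma_p$. To get it, use that $x-\delta e\in\Gamma_p$ for $x\in\Gamma_p$ and small $\delta>0$. Then the roots of $t\mapsto D_{E_\varepsilon}p(x+te)$ stay below $-\delta$ uniformly in $\varepsilon$. Hence so do the roots of the limit, whose degree is preserved once $D_Ep(e)>0$ is known.
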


\begin{remark}
\label{rem:boundary-hyperbolic-direction}
The approximation in Theorem~\ref{thm:derivhyper}
is particularly useful for the determinant polynomial. Indeed,
$\det$ is hyperbolic on the positive definite cone, while
positive semidefinite matrices occur naturally as differentiation
directions. If $E\geq0$ and $E\neq0$, then
\[
E+\varepsilon I>0,
\]
and hence $D_{E+\varepsilon I}\det$ is hyperbolic. Passing to the
limit gives the corresponding statement for $D_E\det$.
See \cite{BGLS01,Renegar06} for the general hyperbolic-polynomial
framework.
\end{remark}

\begin{theorem}\label{thm:logderivative}
Let $p$ be positive and hyperbolic on its cone and let $E$ lie in the cone. Then
\[
x\longmapsto\frac{D_Ep(x)}{p(x)}
\]
is convex. The same non-strict convexity conclusion remains valid for $E\in\overline{\Gamma_p}$, provided $D_Ep\not\equiv0$, by
approximating $E$ with interior directions.
\end{theorem}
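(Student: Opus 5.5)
The plan is to deduce the statement from Theorem~\ref{thm:garding} by taking reciprocals. On the hyperbolicity cone $\Gamma_p$ I will show that the ratio $p/D_Ep$ is concave and strictly positive. Then $D_Ep/p$ is its reciprocal, and the reciprocal of a positive concave function is convex.

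\emph{Step 1: positivity of $D_Ep$ on $\Gamma_p$.} Fix $x\in\Gamma_p$ and $E\in\Gamma_p$. By G\aa rding's theory, $p$ is hyperbolic with respect to every direction in its cone, and the cone is the same for each such direction; I take this as part of the standard facts cited from \cite{Garding59,Renegar06}. Hence I can factor
\[
p(x+tE)=p(E)\prod_{i=1}^d\bigl(t+\lambda_i(x)\bigr)
\]
with real $\lambda_i(x)$. Since $x+tE\in\Gamma_p$ for all $t\ge0$, the polynomial has no roots in $t\ge0$, so every $\lambda_i(x)>0$. Differentiating at $t=0$ gives
\[
\frac{D_Ep(x)}{p(x)}=\sum_{i=1}^d\frac{1}{\lambda_i(x)}>0 .
\]
In particular $D_Ep>0$ on $\Gamma_p$, so the function $f=p/D_Ep$ is well defined and positive there.

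\emph{Step 2: convexity for interior directions.} By Theorem~\ref{thm:garding}, $f$ is concave on the convex cone $\Gamma_p$. Restrict to a segment $s\mapsto x+sy$ lying in $\Gamma_p$ and write $g(s)=f(x+sy)>0$, which is concave in $s$. The map $\phi(\tau)=1/\tau$ is convex and nonincreasing on $(0,\infty)$. For $s=(1-\theta)s_0+\theta s_1$, concavity gives $g(s)\ge(1-\theta)g(s_0)+\theta g(s_1)$. Applying $\phi$, using first monotonicity and then convexity,
\[
\frac{1}{g(s)}\le \frac{1}{(1-\theta)g(s_0)+\theta g(s_1)}\le \frac{1-\theta}{g(s_0)}+\frac{\theta}{g(s_1)} .
\]
So $1/g$ is convex along every segment, and therefore $D_Ep/p=1/f$ is convex on $\Gamma_p$.

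\emph{Step 3: boundary directions.} Let $E\in\overline{\Gamma_p}$ with $D_Ep\not\equiv0$, and set $E_\varepsilon=E+\varepsilon e\in\Gamma_p$. By Step~2, each $D_{E_\varepsilon}p/p$ is convex on $\Gamma_p$. Since $D_{E_\varepsilon}p=D_Ep+\varepsilon D_ep$ is linear in the direction, $D_{E_\varepsilon}p\to D_Ep$ locally uniformly. The denominator $p$ is positive and independent of $\varepsilon$. A pointwise limit of convex functions is convex, so $D_Ep/p$ is convex (not necessarily strictly).

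\emph{Main obstacle.} The only delicate point is Step~1: the positivity of $D_Ep$ and the use of $E$, rather than the original $e$, as the direction of hyperbolicity. This rests on G\aa rding's theorem that the hyperbolicity cone does not depend on the choice of interior direction. I will cite this result rather than reprove it. Once it is available, the rest is the elementary reciprocal argument of Step~2 and the limiting argument of Step~3.
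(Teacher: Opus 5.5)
Your proof is correct, but it takes a different route from the paper. The paper gives no derivation. It imports the statement as Corollary~4.7 of \cite{BGLS01} and handles boundary directions by the same $E+\varepsilon e$ approximation as your Step~3.

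You instead show that the statement is a formal corollary of Theorem~\ref{thm:garding}: once $D_Ep>0$ on $\Gamma_p$, the reciprocal of the positive concave quotient $p/D_Ep$ is convex. This is more economical and clarifies the logical structure of the preliminaries. The log-derivative convexity is not an independent ``standard fact'' but a consequence of G\aa rding's quotient concavity. That quotient concavity is the stronger statement, since convexity of $1/f$ does not imply concavity of $f$, so only one external citation is actually needed.

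The price is that you inherit whatever Theorem~\ref{thm:garding} rests on. You also rely on G\aa rding's facts that $\Gamma_p$ is convex and that $p$ is hyperbolic with respect to every $E\in\Gamma_p$ with the same cone; positivity of $D_Ep$ on $\Gamma_p$ is in any case implicit in Theorem~\ref{thm:garding} being meaningful. Your Step~1 identity $D_Ep(x)/p(x)=\sum_i\lambda_i(x)^{-1}$ also links to the framework of \cite{BGLS01}, where convex symmetric functions of the hyperbolic eigenvalues are convex. There the conclusion follows directly, because $\lambda\mapsto\sum_i\lambda_i^{-1}$ is convex and symmetric on the positive orthant, without passing through the harmonic-mean concavity.

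Two small points. In Step~3 you should say why $E+\varepsilon e\in\Gamma_p$: for an open convex cone, $\overline{\Gamma_p}+\Gamma_p\subset\Gamma_p$. Also, the hypothesis $D_Ep\not\equiv0$ plays no role in the convexity conclusion itself, since a pointwise limit of convex functions is convex. It matters only for the hyperbolicity of $D_Ep$ in Theorem~\ref{thm:derivhyper}.
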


The last statement is the application form of Corollary 4.7 of Bauschke--G\"uler--Lewis--Sendov \cite{BGLS01}; it is also the form used in \cite{LMS26}.

Fix a unit vector $\alpha\in\R^n$ and set
\begin{equation}\label{eq:Qreal}
Q=I-\alpha\otimes\alpha.
\end{equation}
We use the following theorem.

\begin{theorem}\label{thm:LMS}
For every $\lambda>0$, the function
\begin{equation}\label{eq:LMSquotient}
A\longmapsto
\frac{\sigma_2(A^{-1})-\lambda}{\tr(QA^{-1})}
\end{equation}
is convex on $\Sym_{++}(n)$.
\end{theorem}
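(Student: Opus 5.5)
The plan is to rewrite the quotient \eqref{eq:LMSquotient} entirely in terms of hyperbolic polynomials in $A$ and then apply Theorems~\ref{thm:garding}, \ref{thm:derivhyper} and \ref{thm:logderivative}.

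\textbf{Step 1: polynomial form.} For $A\in\Sym_{++}(n)$ one has
\[
\det A\;\tr(QA^{-1})=D_Q\det(A),
\qquad
\det A\;\sigma_2(A^{-1})=\sigma_{n-2}(A)=\tfrac12 D_I^2\det(A).
\]
Hence the function in \eqref{eq:LMSquotient} equals
\[
\frac{\sigma_2(A^{-1})}{\tr(QA^{-1})}-\frac{\lambda}{\tr(QA^{-1})}
=\frac{\sigma_{n-2}(A)}{D_Q\det(A)}-\lambda\,\frac{\det A}{D_Q\det(A)} .
\]

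\textbf{Step 2: the $\lambda$-term.} Here $Q\ge0$, $Q\neq0$ (since $n\ge2$), and $D_Q\det\not\equiv0$. By Remark~\ref{rem:boundary-hyperbolic-direction}, Theorem~\ref{thm:garding} applies with $E=Q$ after approximating by $Q+\varepsilon I$. So $A\mapsto \det A/D_Q\det(A)$ is concave on $\Sym_{++}(n)$. Since $\lambda>0$, the term $-\lambda\det A/D_Q\det(A)$ is convex. This is exactly why the hypothesis $\lambda>0$ is needed.

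\textbf{Step 3: the main term.} It remains to show that $A\mapsto \sigma_{n-2}(A)/D_Q\det(A)$ is convex; this is the main obstacle. Set $p=D_Q\det$. By Theorem~\ref{thm:derivhyper} and the same approximation, $p$ is hyperbolic of degree $n-1$, and its cone contains $\Sym_{++}(n)$. Theorem~\ref{thm:logderivative} then gives convexity of $D_Ep/p$ for every $E$ in the closed cone, and applying it twice controls second-order log-derivatives.

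The numerator $\sigma_{n-2}$ is not literally $D_Ep$. My first attempt is to compare the two in a basis with $\alpha=e_n$. There $D_Q\det(A)=\sum_{i<n}\det A_{\hat i\hat i}$. Expanding $\sigma_{n-2}(A)$ into principal $(n-2)$-minors and splitting according to whether the index $n$ is deleted, I expect to write
\[
\sigma_{n-2}(A)=c_1\,D_I p(A)+c_2\,\det A_{\hat n\hat n}\cdot(\text{correction}),
\]
with explicit constants $c_1,c_2$. The first piece is then convex by Theorem~\ref{thm:logderivative}. The correction should be handled by viewing $\det A_{\hat n\hat n}=D_{e_ne_n^T}\det(A)$ as another derivative direction lying in $\overline{\Sym_{++}}$.

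\textbf{Fallback.} If this decomposition does not close, I will compute the second variation directly. Write $B=A^{-1}$ and differentiate along $A+tH$, so that $\dot B=-BHB$ and $\ddot B=2BHBHB$. Let $f=\sigma_2(B)-\lambda$ and $g=\tr(QB)>0$. Convexity of $f/g$ at $t=0$ amounts to
\[
\ddot f\,g^2-2\dot f\dot g\,g-f\ddot g\,g+2f\dot g^2\ \ge0 .
\]
I would diagonalize $B$, set $K=B^{1/2}HB^{1/2}$, and reduce this inequality to a quadratic form in $K$. The aim is then to show positivity via a Cauchy--Schwarz argument in the two directions $\alpha$ and $\alpha^\perp$.

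In either route the crux is the same. One must control the cross term $\dot f\dot g$ by $\ddot f g$ and $f\ddot g$. The $\lambda$-dependent part of this estimate has the favorable sign, by the concavity established in Step 2.
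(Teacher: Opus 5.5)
Your Steps~1 and~2 are correct. They match the paper's treatment of the constant term: $-\lambda/\tr(QA^{-1})=-\lambda\det A/D_Q\det(A)$ is convex because $\det/D_Q\det$ is concave (G\aa rding with the boundary direction $Q$, i.e.\ the Alvarez--Lasry--Lions reciprocal-trace concavity).

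The gap is Step~3, which carries essentially all the content of the theorem. It rests on a false premise: $\sigma_{n-2}$ \emph{is} literally a directional derivative of $p=D_Q\det$, and in a positive definite direction. For any symmetric $R$,
$D_Rp(A)=\det A\,\bigl[\tr(RA^{-1})\tr(QA^{-1})-\tr(RA^{-1}QA^{-1})\bigr]$.
For $R_0=\tfrac12(I+\alpha\otimes\alpha)$, the terms involving $\alpha^TA^{-1}\alpha$ cancel and the bracket equals $\sigma_2(A^{-1})$. Hence
$\sigma_{n-2}(A)=\det A\,\sigma_2(A^{-1})=D_{R_0}p(A)$.
Since $R_0>0$ lies in the hyperbolicity cone of $p$, Theorem~\ref{thm:logderivative} gives convexity of $\sigma_{n-2}/p$ at once. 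The paper phrases the same fact as: $p/D_{R_0}p$ is concave by Theorem~\ref{thm:garding} and positive, so its reciprocal is convex. This identification is the missing idea, and without it your Step~3 establishes nothing.

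Your minor-splitting attempt would recover the identity, but only if the correction term is read correctly. Take $\alpha=e_n$ and let $A_{\widehat{ij}}$ be the principal submatrix with rows and columns $i,j$ deleted. Then
$D_Ip(A)=2\sum_{i<j<n}\det A_{\widehat{ij}}+\sum_{i<n}\det A_{\widehat{in}}$
and
$D_{e_ne_n^T}p(A)=\sum_{i<n}\det A_{\widehat{in}}=\det A_{\hat n\hat n}\,\tr\bigl(A_{\hat n\hat n}^{-1}\bigr)$.
So $c_1=c_2=\tfrac12$, and the correction is $D_{e_ne_n^T}p$: a derivative of $p$, not of $\det$. Your plan to handle it through $\det A_{\hat n\hat n}=D_{e_ne_n^T}\det(A)$ does not fit any of the quoted tools. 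Theorem~\ref{thm:logderivative} only controls quotients $D_Ep/p$ with the same hyperbolic polynomial in numerator and denominator, not mixed quotients of a derivative of $\det$ over $D_Q\det$. The remark that applying the theorem ``twice'' controls second-order log-derivatives plays no role here.

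Your fallback does not fill the gap either. It only restates the second-variation inequality to be proved and leaves the control of the cross term $\dot f\dot g$ open, which is exactly the difficulty the hyperbolic-polynomial identity resolves.
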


The hyperbolic-polynomial representation behind Theorem~\ref{thm:LMS} is worth recording because the same polynomial will control an additional term in our equation. Put
\[
b_\alpha=\alpha\otimes\alpha,
\qquad
R_0=\frac12(I+b_\alpha),
\qquad
g(A)=D_Q\det A.
\]
Then
\begin{equation}\label{eq:g-real}
g(A)=\det A\,\tr(QA^{-1}),
\end{equation}
and
\begin{equation}\label{eq:D_R_g-real}
D_{R_0}g(A)=\det A\,\sigma_2(A^{-1}).
\end{equation}
Thus
\[
\frac{\tr(QA^{-1})}{\sigma_2(A^{-1})}
=\frac{g(A)}{D_{R_0}g(A)}.
\]
By G\aa rding quotient concavity the last quotient is concave. The remaining constant term in \eqref{eq:LMSquotient} is handled using the reciprocal-trace concavity of Alvarez--Lasry--Lions \cite{ALL97}.

In the transformed equation, the preceding inverse-convexity term is
coupled with an additional scalar variable. To preserve convexity under
the rescaling used later, we shall use the following elementary
perspective principle.

\begin{lemma}\label{lem:perspective}
Let $\Theta$ be convex on a positive definite cone and homogeneous of degree $-1$:
\[
\Theta(tB)=t^{-1}\Theta(B),\qquad t>0.
\]
Then
\[
(B,a)\longmapsto a^2\Theta(B),\qquad a>0,
\]
is jointly convex.
\end{lemma}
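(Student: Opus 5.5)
The plan is to reduce joint convexity to convexity of a one-variable restriction along segments, using the homogeneity of $\Theta$ to identify $a^2\Theta(B)$ as a perspective-type function. The standard fact is that if $f$ is convex on a cone $K$, then its perspective $(x,s)\mapsto s f(x/s)$ is jointly convex on $K\times(0,\infty)$. So the first step is to rewrite the function in perspective form:
\[
a^2\Theta(B)=a\cdot a\,\Theta(B)=a\,\Theta(B/a),
\]
because degree $-1$ homogeneity gives $\Theta(B/a)=a\,\Theta(B)$. Hence
\[
a^2\Theta(B)=a\,\Theta\!\left(\frac{B}{a}\right),
\]
which is exactly the perspective of $\Theta$ evaluated at $(B,a)$. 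The positive definite cone is invariant under division by $a>0$, so this expression is well defined on the whole domain.

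The second step is to prove perspective convexity directly, so the argument stays self-contained. Take $(B_1,a_1),(B_2,a_2)$ with $a_i>0$ and $t\in[0,1]$, and set $a=ta_1+(1-t)a_2$ and $B=tB_1+(1-t)B_2$. Then
\[
\frac{B}{a}=\frac{ta_1}{a}\,\frac{B_1}{a_1}+\frac{(1-t)a_2}{a}\,\frac{B_2}{a_2}
\]
is a convex combination of points of the cone, with weights $ta_1/a$ and $(1-t)a_2/a$ summing to $1$. Applying convexity of $\Theta$ and multiplying through by $a$ gives
\[
a\,\Theta(B/a)\le ta_1\Theta(B_1/a_1)+(1-t)a_2\Theta(B_2/a_2).
\]
By the first step this inequality is exactly the desired convexity of $(B,a)\mapsto a^2\Theta(B)$.

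There is no real obstacle here. The only point needing care is the bookkeeping with the weights: they must be the rescaled weights $ta_i/a$, not $t$ itself. One should also check that $B/a$ stays in the (convex) positive definite cone, so that convexity of $\Theta$ can be applied.
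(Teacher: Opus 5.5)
Your proof is correct and essentially the paper's own argument: you rewrite $a^2\Theta(B)=a\,\Theta(B/a)$ using degree $-1$ homogeneity, then prove perspective convexity directly by writing $B/a$ as a convex combination of the points $B_i/a_i$ with weights $ta_i/a$. The paper uses the same reweighting ($\mu_i$ in its notation), applies convexity of $\Theta$, multiplies through by $a_\theta$, and uses homogeneity to return to $a^2\Theta(B)$.
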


\begin{proof}
Define
\[
\Phi(B,a)=a^2\Theta(B),\qquad a>0.
\]
Since $\Theta$ is homogeneous of degree $-1$,
\begin{equation}\label{eq:perspective-identity}
\Phi(B,a)
=
a\,\Theta(B/a).
\end{equation}

Let $(B_1,a_1)$ and $(B_2,a_2)$ be two points in the domain, and let
$0\le\theta\le1$. Set
\begin{equation}\label{eq:perspective-combination}
B_\theta
=
\theta B_1+(1-\theta)B_2,
\qquad
a_\theta
=
\theta a_1+(1-\theta)a_2.
\end{equation}
Since $a_1,a_2>0$, also $a_\theta>0$. Define
\begin{equation}\label{eq:perspective-weights}
\mu_1
=
\frac{\theta a_1}{a_\theta},
\qquad
\mu_2
=
\frac{(1-\theta)a_2}{a_\theta}.
\end{equation}
Then $\mu_1,\mu_2\ge0$, $\mu_1+\mu_2=1$, and
\begin{equation}\label{eq:perspective-scaled-combination}
\frac{B_\theta}{a_\theta}
=
\mu_1\frac{B_1}{a_1}
+
\mu_2\frac{B_2}{a_2}.
\end{equation}
By the convexity of $\Theta$,
\begin{align}
\Theta\left(\frac{B_\theta}{a_\theta}\right)
&\le
\mu_1\Theta\left(\frac{B_1}{a_1}\right)
+
\mu_2\Theta\left(\frac{B_2}{a_2}\right).
\end{align}
Multiplying by $a_\theta$ and using
\eqref{eq:perspective-weights}, we obtain
\begin{align}
a_\theta
\Theta\left(\frac{B_\theta}{a_\theta}\right)
&\le
\theta a_1
\Theta\left(\frac{B_1}{a_1}\right)
+
(1-\theta)a_2
\Theta\left(\frac{B_2}{a_2}\right).
\end{align}
Finally, by the homogeneity of $\Theta$,
\[
a\,\Theta(B/a)=a^2\Theta(B).
\]
Hence
\[
\Phi(B_\theta,a_\theta)
\le
\theta\Phi(B_1,a_1)
+
(1-\theta)\Phi(B_2,a_2),
\]
which proves that $(B,a)\mapsto a^2\Theta(B)$ is jointly convex.
\end{proof}


We use the following application form of the microscopic convexity principle \cite{BG09,BG10}. The preceding convexity results will be used to verify the structural
condition in the microscopic convexity principle. We record below the
form of the Bian--Guan constant-rank theorem that will be applied in
the real graphical equation.

\begin{theorem}\label{thm:BG}
Let $D\subset\R^N$ be connected and let $v\in C^{3,1}(D)$ be convex. Suppose
\[
F(D^2v,Dv,v,x)=0,
\]
where $F\in C^{2,1}$. Assume:
\begin{enumerate}[label=\textup{(\roman*)}]
\item $F$ is elliptic along the solution, i.e.
\[
(F^{\alpha\beta})=\left(\frac{\partial F}{\partial r_{\alpha\beta}}\right)>0;
\]
\item $F(0,Dv,v,x)\neq0$ along the solution;
\item for every fixed relevant gradient $p$, the set
\begin{equation}\label{eq:BGlevel}
\{(A,z,x):A\in\Sym_{++}(N),\ F(A^{-1},p,z,x)\le0\}
\end{equation}
is locally convex near the relevant points.
\end{enumerate}
Then $\rank D^2v$ is constant in $D$.
\end{theorem}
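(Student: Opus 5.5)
The statement is the application form of the Bian--Guan microscopic convexity principle \cite{BG09,BG10}. I would prove it by reproducing the strong-maximum-principle argument for a rank function of $D^2v$, checking that hypotheses (i)--(iii) are exactly what it uses.

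\textbf{Setup.} Let $l=\min_D\rank D^2v$ and fix $x_0\in D$ with $\rank D^2v(x_0)=l$. Hypothesis (ii) excludes $D^2v\equiv0$ near points where the rank degenerates completely. If $l=N$ there is nothing to prove. Otherwise, in a small neighbourhood $\mathcal O$ of $x_0$ I would set
\[
\phi=\sigma_{l+1}(D^2v)+\frac{\sigma_{l+2}(D^2v)}{\sigma_{l+1}(D^2v)},
\]
with the usual convention when $\sigma_{l+1}=0$. Since $v\in C^{3,1}$, $\phi\in C^{1,1}$, $\phi\ge0$ and $\phi(x_0)=0$. The goal is the differential inequality
\[
F^{\alpha\beta}\phi_{\alpha\beta}\le C\bigl(\phi+|D\phi|\bigr)\quad\text{in }\mathcal O
\]
in the viscosity/a.e.\ sense. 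By (i) the strong maximum principle then gives $\phi\equiv0$ near $x_0$. So the set $\{\rank D^2v=l\}$ is open; it is closed by lower semicontinuity of the rank and minimality of $l$. Connectedness of $D$ then gives constant rank.

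\textbf{Computing $F^{\alpha\beta}\phi_{\alpha\beta}$.} At each point of $\mathcal O$ I rotate coordinates so that $D^2v=\diag(\lambda_1,\dots,\lambda_N)$. I split the indices into a good set $G=\{1,\dots,l\}$, where $\lambda_i\ge c>0$, and a bad set $B$, where $\lambda_i\lesssim\phi$. The standard Bian--Guan computation gives, modulo $O(\phi+|D\phi|)$,
\[
F^{\alpha\beta}\phi_{\alpha\beta}
\approx\sum_{i\in B}\Bigl[F^{\alpha\beta}v_{ii\alpha\beta}
-2\sum_{j\in G}\frac{F^{\alpha\beta}v_{ij\alpha}v_{ij\beta}}{\lambda_j}\Bigr]
-\text{(quotient terms in }v_{ij\alpha},\ i,j\in B).
\]
The quotient terms come from the $\sigma_{l+2}/\sigma_{l+1}$ part and carry the right sign, which is precisely why that part is included. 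I then differentiate the equation twice in a bad direction $e_i$. This gives
\[
F^{\alpha\beta}v_{ii\alpha\beta}=-F^{\alpha\beta,\gamma\delta}v_{\alpha\beta i}v_{\gamma\delta i}
-2F^{\alpha\beta,p_k}v_{\alpha\beta i}v_{ki}
-2F^{\alpha\beta,z}v_{\alpha\beta i}v_i
-2F^{\alpha\beta,x_i}v_{\alpha\beta i}-\cdots
\]
Since $v_{ki}=\lambda_i\delta_{ki}=O(\phi)$ for $i\in B$, all terms involving second derivatives in $p$, and the mixed $p$-terms, are controlled. This is why condition (iii) only needs $p$ frozen. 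The terms in $(z,x)$ remain, because $v_i$ and $x$ vary at first order along $e_i$.

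\textbf{Main obstacle: extracting the sign from (iii).} One must show that, for each bad index $i$,
\[
-F^{\alpha\beta,\gamma\delta}X_{\alpha\beta}X_{\gamma\delta}-2\sum_{j\in G}\frac{F^{\alpha\beta}X_{j\alpha}X_{j\beta}}{\lambda_j}
-(\text{$z,x$ second-order terms with }X)\le C(\phi+|D\phi|),
\]
where $X_{\alpha\beta}=v_{\alpha\beta i}$. The $v_{\alpha\beta i}$ with $\alpha,\beta\in B$ are themselves $O(\phi+|D\phi|)$ by the vanishing of $D\phi$-type relations. The plan is to write $\tilde F(A,z,x)=F(A^{-1},p,z,x)$ and expand its second variation at $A=(D^2v)^{-1}$. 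This expansion is made on the good block, after a perturbation $D^2v+\varepsilon I$ to invert and letting $\varepsilon\to0$. The chain rule for $A\mapsto A^{-1}$ produces exactly the term $2F^{\alpha\beta}X_{j\alpha}X_{j\beta}/\lambda_j$.

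Local convexity of the sublevel set in (iii) is equivalent to
\[
D^2\tilde F[\,\cdot,\cdot\,]\ge0\quad\text{on the tangent space }\{D\tilde F=0\}.
\]
It is combined with the admissible freedom to add a multiple of the equation's gradient, which is allowed because the first variation is pinned down by the once-differentiated equation. This yields the required inequality. Getting this bookkeeping exactly right, with the tangential restriction and the $(z,x)$ directions coupled through $v_i$, is where I expect the real work to lie. I would follow the corresponding lemma of \cite{BG10} essentially verbatim, checking that freezing $p$ costs only $O(\phi)$ errors.
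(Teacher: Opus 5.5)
The paper does not prove this statement. It quotes it as the application form of the Bian--Guan microscopic convexity principle \cite{BG09,BG10}. So your plan of re-running their argument is the same route, and much of your bookkeeping is right: the test function $\phi=\sigma_{l+1}+\sigma_{l+2}/\sigma_{l+1}$, the strong maximum principle, and freezing $p$ because $v_{ki}=O(\phi)$ for $i\in B$.

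The step that fails as written is your reduction to the good block. It is not true that the derivatives $v_{\alpha\beta i}$ with $\alpha,\beta\in B$ are $O(\phi+|D\phi|)$. The relation coming from $D\phi$ controls only a weighted trace $\sum_{j\in B}v_{jj\alpha}$, and individual entries can be of size $\sqrt\phi$. For instance, take a bad block $\diag\bigl((x_1-a)^2,(x_1+a)^2\bigr)$. At $x_1=0$ one has $\phi+|D\phi|=O(a^2)$, while the entries have $x_1$-derivatives $\mp2a$.

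Your claim also does not touch the mixed entries $X_{jk}=v_{jki}$ with $j\in B$, $k\in G$. These are exactly the entries multiplied by $(\lambda_j+\varepsilon)^{-1}$ when you invert $D^2v+\varepsilon I$. If you keep them, those terms blow up as $\varepsilon\to0$. If you drop them, the restricted variation $X_{GG}$ is no longer tangent to the level set. Moreover, $F^{\alpha\beta,\gamma\delta}X_{\alpha\beta}X_{\gamma\delta}$ still contains cross terms between $X_{GG}$ and these entries.

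The missing idea is what the quotient part of $\phi$ is for. In the Bian--Guan computation, the second variation of $\sigma_2/\sigma_1$ on the bad block contributes, to leading order, the negative terms
\[
-\frac{1}{\sigma_1(B)}\sum_{j\neq k\in B}F^{\alpha\beta}v_{jk\alpha}v_{jk\beta}
-\frac{1}{\sigma_1(B)^3}\sum_{j\in B}F^{\alpha\beta}K^j_\alpha K^j_\beta,
\qquad
K^j_\alpha=\sigma_1(B)v_{jj\alpha}-v_{jj}\sum_{k\in B}v_{kk\alpha}.
\]
Since $\sigma_1(B)\le C\phi$, these carry coefficients of order $\phi^{-1}$. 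Combined with the trace control from $D\phi$, Cauchy--Schwarz lets them absorb every bounded linear or quadratic term containing a third derivative with two indices in $B$. This covers:
\begin{itemize}
\item the non-$G\times G$ part of $F^{\alpha\beta,\gamma\delta}XX$;
\item its cross terms with $X_{GG}$ and with the $(z,x)$-variations;
\item the tangency defect.
\end{itemize}
Only after this absorption should you apply (iii), at $\diag(\lambda_G,\varepsilon I_B)^{-1}$ and in directions supported on $G\times G$. There the inverse chain rule produces exactly $\sum_{j\in G}\lambda_j^{-1}F^{\alpha\beta}X_{j\alpha}X_{j\beta}$, and $\varepsilon\to0$ is harmless.

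Relatedly, weak level-set convexity, i.e.\ $D^2\tilde F\ge0$ on $\ker D\tilde F$, does not yield $D^2\tilde F+\mu\,D\tilde F\otimes D\tilde F\ge0$ for any finite $\mu$. The obstruction is cross terms with null tangential directions. Instead, split the variation into a tangential part plus a normal part proportional to the tangency defect. The resulting error is linear in the two-bad-index derivatives plus $O(\phi)$, and the same $\sigma_1(B)^{-1}$ terms absorb it.
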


%


\section{Proof of Theorem~\ref{thm:real-strict-convexity}}\label{sec:proofs}

\subsection{The constant-rank theorem: two proofs}\label{real-const-thm}

To prove Theorem~\ref{thm:real-strict-convexity}, we first need  the constant rank theorem. For $n=3$, we can adopt a Ma–Xu type proof from \cite{MX08}. For $n\geq 3$, we build on the inverse-convexity method developed in \cite{LMS26}. The additional graphical term involving the second Newton tensor can be handled by the same hyperbolic-polynomial structure underlying the inverse-convexity argument in \cite{LMS26}, together with a perspective argument.

\begin{theorem}\label{thm:realn}
Let $n\ge3$, let $\Omega\subset\R^n$ be connected, and let $u\in C^4(\Omega)$ satisfy $u<0$. Assume that the graph of $u$ is $\Gamma_2$-admissible and that \eqref{eq:real-main} holds. Set $v=-\sqrt{-u}$. If
$D^2v\ge0\,\,\text{in }\Omega$, then $\rank D^2v$ is constant in $\Omega$.
\end{theorem}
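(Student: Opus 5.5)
Our plan is to apply the Bian--Guan principle, Theorem~\ref{thm:BG}, to the equation satisfied by $v$, with the inverse-convexity hypothesis (iii) checked through Theorem~\ref{thm:LMS}, the hyperbolic polynomial $g=D_Q\det$, and the perspective Lemma~\ref{lem:perspective}.

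\textbf{Step 1: the transformed equation.} With $u=-v^2$ we have $Du=-2vDv$ and $D^2u=-2(vD^2v+Dv\otimes Dv)$. The graph curvature matrix is $a=\omega^{-1}P^{-1/2}D^2uP^{-1/2}$ with $P=I+Du\otimes Du$. Since $P^{-1/2}$ is explicit for a rank-one perturbation, we write $\sigma_2(\kappa)$ as $\sigma_2$ of the symmetric matrix $\omega^{-1}(I+Du\otimes Du)^{-1}D^2u$, conjugated appropriately. Substituting, and expanding the rank-one piece $Dv\otimes Dv$ by Lemma~\ref{lem:rank-one-perturbation}, we aim for a form
\[
\sigma_2(vW)+\beta\,T_1(vW)[q,q]+\gamma\,T_2\text{-type term}=\tfrac14\bigl(1+4v^2|Dv|^2\bigr)^{(b+4)/2}.
\]
Here $W$ is $D^2v$ conjugated by $P^{-1/2}$, $q$ is proportional to $Dv$, and $\beta,\gamma$ depend only on $(v,Dv)$. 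Sign conventions are chosen so that $-v>0$ multiplies $W$. For $b=-4$ the right side is the constant $\tfrac14$, which is why \eqref{eq:real-main} is special. Homogeneity in $v$ should show that $v^2$ multiplies the $\sigma_2(W)$ term and $v$ multiplies the $T_1$ term. The $T_2$ term comes from identity \eqref{eq:rankone2} after the metric conjugation.

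\textbf{Step 2: hypotheses (i) and (ii) of Theorem~\ref{thm:BG}.} Ellipticity (i) follows from $\Gamma_2$-admissibility and \eqref{eq:T1-positive}, since $D^2u$ and the curvature matrix are related by a positive congruence plus a fixed rank-one shift. For (ii), set $D^2v=0$. The left side then reduces to $\sigma_2$ of a rank-one matrix, which vanishes, so $F(0,\cdot)=-\tfrac14\neq0$.

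\textbf{Step 3: hypothesis (iii), the main obstacle.} Fix $p=Dv$ and replace $D^2v$ by $A^{-1}$ with $A>0$. Conjugation by the fixed matrix $P^{-1/2}$ preserves convexity in $A$, and we rotate so that $q\parallel\alpha$. Dividing the sublevel inequality by the positive quantity $T_1(A^{-1})[\alpha,\alpha]=\tr(QA^{-1})$ (up to $|q|^2$), the condition becomes convexity in $(A,z)$, with $z=-v>0$, of a sum of three pieces:
\[
z^2\frac{\sigma_2(A^{-1})-\lambda}{\tr(QA^{-1})}\,,\qquad
\frac{T_2(A^{-1})[\alpha,\alpha]}{\tr(QA^{-1})}\,,\qquad
\text{a term linear in } z .
\]
Here $\lambda$ absorbs the constant $\tfrac14$ divided by $z^2$, so it depends on $z$ and is not fixed.

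We handle this by writing the first piece as $\bigl(z^2\sigma_2(A^{-1})-\tfrac14\bigr)/\tr(QA^{-1})$. The part $z^2\,\sigma_2(A^{-1})/\tr(QA^{-1})$ equals $z^2\,D_{R_0}g/g$. By Theorem~\ref{thm:logderivative} the map $A\mapsto D_{R_0}g/g$ is convex, and it is homogeneous of degree $-1$, so Lemma~\ref{lem:perspective} makes $z^2 D_{R_0}g/g$ jointly convex. The part $-\tfrac14/\tr(QA^{-1})$ is convex by the reciprocal-trace concavity of \cite{ALL97}.

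For the $T_2$ piece, note that $T_2(A^{-1})[\alpha,\alpha]$ equals $\sigma_2$ of the restriction of $A^{-1}$ to $\alpha^\perp$, which is $\det A\cdot D_{R}g$ for a suitable positive semidefinite direction $R$ supported on $\alpha^\perp$. The ratio $D_Rg/g$ is then again convex by Theorem~\ref{thm:logderivative}, using the boundary-direction approximation of Remark~\ref{rem:boundary-hyperbolic-direction}. If its coefficient carries a factor $z^2$, Lemma~\ref{lem:perspective} is applied once more.

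The delicate points are the signs of $\beta$ and $\gamma$, and whether each coefficient depends on $z$ homogeneously enough for the perspective argument to apply. If a sign is wrong, we would combine the $T_2$ piece with the $\sigma_2$ piece into a single $D_{R'}g/g$, where $R'=R_0+cR$ stays positive semidefinite. Once (iii) holds, Theorem~\ref{thm:BG} gives constant rank of $D^2v$ on $\Omega$.
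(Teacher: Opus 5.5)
Your strategy works, and at the key step it differs from the paper's Method II. The paper rescales $B=A/(-z)$. This turns $z^2\sigma_2(A^{-1})$ into $\sigma_2(B^{-1})$ with the constant $\tfrac14$ unchanged, so Theorem~\ref{thm:LMS} applies with fixed $\lambda=\tfrac14$, and the $T_2$-coefficient becomes $a^2$. But $(A,z)\mapsto A/(-z)$ is not affine, so the paper then needs monotonicity in $a$ and the reweighting argument \eqref{eq:weightedB-real}--\eqref{eq:return-real} to bring convexity back to $(A,z)$, plus a separate $p=0$ case. You stay in $(A,z)$ and open up the proof of Theorem~\ref{thm:LMS} instead. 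The piece $z^2\sigma_2(A^{-1})/\tr(QA^{-1})=z^2D_{R_0}g/g$ is jointly convex by Lemma~\ref{lem:perspective}, and $-\tfrac14/\tr(QA^{-1})$ is convex and independent of $z$. Hence $F(A^{-1},p,-z)/\tr(QA^{-1})$ is itself jointly convex, and its zero sublevel set is the Bian--Guan set. No rescaling or reweighting is needed, and $p=0$ is covered by the same formula with any unit $\alpha$. This removes exactly the obstacle you noticed, that $\lambda=1/(4z^2)$ varies with $z$. The cost is that you use the ingredients of Theorem~\ref{thm:LMS} (G\aa rding/log-derivative convexity and reciprocal-trace concavity) rather than the theorem as a black box.

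Two points in your write-up need repair.

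\textbf{The conjugation step.} $P=I+Du\otimes Du=I+4z^2p\otimes p$ depends on $z$. So conjugation by $P^{-1/2}$ is not a fixed linear map on $(A,z)$-space and does not preserve joint convexity; the conjugated form you aim for in Step~1 is not the one to feed into Theorem~\ref{thm:BG}. What you need is the determinant identity \eqref{eq:real-metric-sigma2}, namely $(1+4v^2|Dv|^2)\,\sigma_2(g^{-1}H)=\sigma_2(H)+4v^2T_2(H)[Dv,Dv]$. This identity removes the metric, and it is the actual source of the $T_2$-term. By contrast, \eqref{eq:rankone2} only shows that the shift $-Dv\otimes Dv$ in $K=(-v)D^2v-Dv\otimes Dv$ leaves $T_2(\cdot)[Dv,Dv]$ unchanged. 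Together these give $v^2\sigma_2(D^2v)+vT_1(D^2v)[Dv,Dv]+4v^4T_2(D^2v)[Dv,Dv]=\tfrac14$. Your Step~3 pieces are already written in this unconjugated form, so this is what you are implicitly using, but it has to be derived.

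\textbf{The power of $z$ in the $T_2$-piece.} With the correct equation, the sign of the $T_1$-coefficient does not matter: after division it is exactly $-z|p|^2$. The $T_2$-piece, however, is $4|p|^2z^4\Theta(A)$, where $\Theta=T_2(A^{-1})[\alpha,\alpha]/\tr(QA^{-1})=\tfrac12 D_Qg/g$ (your $R$ is $Q/2$). The power is $z^4$, not $z^2$, and Lemma~\ref{lem:perspective} alone only gives joint convexity of $a^2\Theta(A)$. You must add that $\Theta\ge0$, since the compression of $A^{-1}$ to $\alpha^\perp$ is positive definite. Then $a^2\Theta(A)$ is nondecreasing in $a$, and composing with the convex map $z\mapsto z^2$ makes $z^4\Theta(A)$ jointly convex.

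With these two repairs the fallback $R'=R_0+cR$ is unnecessary, and the argument closes.
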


We first derive the exact equation satisfied by $v=-\sqrt{-u}$. This calculation also explains why the exponent $-4$ is singled out.

\begin{lemma}\label{lem:real-transform}
Let $u<0$, set $v=-\sqrt{-u}$, and let $\omega=\sqrt{1+|Du|^2}$. If
\begin{equation}\label{eq:b-family}
\sigma_2(\kappa[u])=\omega^b,
\end{equation}
then $v$ satisfies
\begin{align}
&v^2\sigma_2(D^2v)
+v\bigl(|Dv|^2\Delta v-v_iv_jv_{ij}\bigr)\notag\\
&\quad
+4v^4\bigl[
\sigma_2(D^2v)|Dv|^2
-(\Delta v)v_iv_jv_{ij}
+v_iv_{ik}v_{kj}v_j
\bigr]\notag\\
&\hspace{30mm}
=\frac14\bigl(1+4v^2|Dv|^2\bigr)^{(b+4)/2}.
\label{eq:v-general-b}
\end{align}
Equivalently,
\begin{align}\label{eq:v-general-Newton}
&v^2\sigma_2(D^2v)
+vT_1(D^2v)[Dv,Dv]
+4v^4T_2(D^2v)[Dv,Dv]\notag\\
&\hspace{35mm}=\frac14\bigl(1+4v^2|Dv|^2\bigr)^{(b+4)/2}.
\end{align}
\end{lemma}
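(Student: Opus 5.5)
The plan is to write $\sigma_2(\kappa[u])$ explicitly in terms of $D^2u$ and $Du$, using the rank-one identities of Lemma~\ref{lem:rank-one-perturbation} twice, and then substitute $u=-v^2$.

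\emph{Step 1: curvature formula.} Write $p=Du$, $U=D^2u$, and $g=I+pp^{T}$ for the induced metric. The second fundamental form is $h=U/\omega$, so the principal curvatures are the eigenvalues of $g^{-1}h=\omega^{-1}g^{-1}U$. This gives $\sigma_2(\kappa[u])=\omega^{-2}\sigma_2(g^{-1}U)$. To compute $\sigma_2(g^{-1}U)$, I use the generating function
\[
\det(I+sg^{-1}U)=\frac{\det(g+sU)}{\det g}=\omega^{-2}\det(I+sU+pp^{T}).
\]
By the adjugate form of the determinant lemma, \eqref{eq:det-generating} and \eqref{eq:adj-newton},
\[
\det(I+sU+pp^{T})=\det(I+sU)+p^{T}\operatorname{adj}(I+sU)p=\sum_j\bigl(\sigma_j(U)+T_{j-1}(U)[p,p]\bigr)s^j .
\]
Reading off the coefficient of $s^2$ gives the key formula
\[
\sigma_2(\kappa[u])=\omega^{-4}\bigl(\sigma_2(D^2u)+T_2(D^2u)[Du,Du]\bigr).
\]
Hence \eqref{eq:b-family} is equivalent to
\[
\sigma_2(D^2u)+T_2(D^2u)[Du,Du]=\omega^{b+4}.
\]
This already shows why $b=-4$ is special: it is exactly the exponent for which the right-hand side becomes $1$.

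\emph{Step 2: substitution.} Put $q=Dv$ and $M=vD^2v$. From $u=-v^2$ we get
\[
Du=-2vq,\qquad D^2u=-2(M+qq^{T}),\qquad \omega^2=1+4v^2|q|^2 .
\]
Using the homogeneity $\sigma_2(cA)=c^2\sigma_2(A)$ and $T_2(cA)=c^2T_2(A)$, together with \eqref{eq:rankone1} and \eqref{eq:rankone2}, I obtain
\[
\sigma_2(D^2u)=4\sigma_2(M+qq^{T})=4\bigl(\sigma_2(M)+T_1(M)[q,q]\bigr),
\]
\[
T_2(D^2u)[Du,Du]=16v^2\,T_2(M+qq^{T})[q,q]=16v^2\,T_2(M)[q,q].
\]
Homogeneity in $M=vD^2v$ then gives
\[
\sigma_2(M)=v^2\sigma_2(D^2v),\qquad T_1(M)=vT_1(D^2v),\qquad T_2(M)=v^2T_2(D^2v).
\]
Substituting these and dividing by $4$ yields \eqref{eq:v-general-Newton}. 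Expanding the Newton tensors via \eqref{eq:T1-explicit-main} and \eqref{eq:T2-explicit-main} gives \eqref{eq:v-general-b}.

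\emph{Main obstacle.} The only delicate point is Step~1: getting the curvature formula with the correct Newton tensor and the correct power of $\omega$. It is easy to mistakenly pick up $T_1$ instead of $T_2$, since the $pp^{T}$ perturbation enters with $s^0$ while $U$ carries $s$. I would double-check the result in two ways: against the rotationally symmetric case, and by verifying that the final equation has the expected homogeneity, with the $T_1$ term of degree $1$ and the $T_2$ term of degree $4$ in $v$. The sign and orientation conventions for $\kappa$ drop out, because $\sigma_2$ is even under $\kappa\mapsto-\kappa$.
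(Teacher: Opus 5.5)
Your argument is correct and uses essentially the same approach as the paper: the adjugate/Newton expansion of a rank-one-perturbed determinant handles the metric factor, and \eqref{eq:rankone1}, \eqref{eq:rankone2} and homogeneity handle $u=-v^2$. The paper substitutes first via $g=I+4v^2Dv\otimes Dv$ and $K=(-v)D^2v-Dv\otimes Dv$, while you first isolate $\omega^4\sigma_2(\kappa[u])=\sigma_2(D^2u)+T_2(D^2u)[Du,Du]$ (the identity the paper later records as $(1+|Du|^2)^2\sigma_2(\kappa[u])=\mathcal E(D^2u,Du)$) and then substitute, which is only a reordering.

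One slip needs fixing. Since $pp^{T}$ carries no factor of $s$, the displayed series should read $\sum_j\bigl(\sigma_j(U)+T_j(U)[p,p]\bigr)s^j$, not $T_{j-1}$. The $s^2$-coefficient $\sigma_2(U)+T_2(U)[p,p]$ that you actually use is the correct one.
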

\begin{proof}
Since $u=-v^2$,
\begin{equation}\label{eq:u-derivative}
Du=-2vDv,
\qquad
D^2u=-2vD^2v-2Dv\otimes Dv.
\end{equation}
Set
\begin{equation}
W=1+4v^2|Dv|^2=\omega^2,
\qquad
g=I+4v^2Dv\otimes Dv,
\qquad
K=(-v)D^2v-Dv\otimes Dv.
\end{equation}
Then $D^2u=2K$. For a graph $u$, the Weingarten map is similar to the
symmetric matrix
\begin{equation}
\frac{1}{\omega}
\bigl(I+Du\otimes Du\bigr)^{-1/2}
D^2u
\bigl(I+Du\otimes Du\bigr)^{-1/2}.
\end{equation}
Therefore, using \eqref{eq:u-derivative}, the symmetric representative
of the shape operator is
\begin{equation}\label{eq:real-shape-v}
\frac{2}{\sqrt{W}}\,g^{-1/2}Kg^{-1/2}.
\end{equation}

We next compute the $\sigma_2$ term. For any symmetric matrix $H$,
\begin{equation}\label{eq:3.8}
\det(I+t g^{-1}H)
=
\frac{\det(g+tH)}{\det g}.
\end{equation}
Since $g=I+4v^2Dv\otimes Dv$, the rank-one determinant formula gives
\begin{equation}
\det(g+tH)
=
\det(I+tH)
+
4v^2Dv^T\operatorname{adj}(I+tH)Dv.
\end{equation}
Since both sides of \eqref{eq:3.8} are polynomials in \(t\), we only need the
coefficient of $t^2$ for the computation of $\sigma_2$. In \(\det(I+tH)\), this coefficient is \(\sigma_2(H)\), whereas the Newton expansion of the adjugate shows that the contribution of the rank-one term to the same coefficient is \(4v^2T_2(H)[Dv,Dv]\). Since \(\det g=W\), comparison of the \(t^2\)-coefficients gives \eqref{eq:real-metric-sigma2}.
\begin{equation}\label{eq:real-metric-sigma2}
W\sigma_2(g^{-1}H)
=
\sigma_2(H)
+
4v^2T_2(H)[Dv,Dv].
\end{equation}

Taking $H=K$ and applying the rank-one perturbation formulas from
Lemma~\ref{lem:rank-one-perturbation}, together with the homogeneity of
the Newton tensors, gives
\begin{align}
\sigma_2(K)
&=
v^2\sigma_2(D^2v)
+
vT_1(D^2v)[Dv,Dv],
\label{eq:real-Ksigma2}\\
T_2(K)[Dv,Dv]
&=
v^2T_2(D^2v)[Dv,Dv].
\label{eq:real-KT2}
\end{align}
Since multiplication of a matrix by $2/\sqrt W$ multiplies
$\sigma_2$ by $4/W$, equations
\eqref{eq:real-shape-v}--\eqref{eq:real-KT2} yield
\begin{equation}\label{eq:real-sigma2-final}
\sigma_2(\kappa[u])
=
\frac{4}{W^2}
\left[
v^2\sigma_2(D^2v)
+
vT_1(D^2v)[Dv,Dv]
+
4v^4T_2(D^2v)[Dv,Dv]
\right].
\end{equation}
Substituting this identity into \eqref{eq:b-family} gives
\eqref{eq:v-general-Newton}, and expanding the Newton contractions gives
\eqref{eq:v-general-b}.
\end{proof}

For $b=-4$, the right-hand side becomes the constant $1/4$. We therefore set
\begin{equation}\label{eq:Freal}
F(R,p,z)
=z^2\sigma_2(R)+zT_1(R)[p,p]+4z^4T_2(R)[p,p]-\frac14.
\end{equation}
Then $v$ satisfies
\begin{equation}\label{eq:Freal-eqn}
F(D^2v,Dv,v)=0.
\end{equation}
In expanded form,
\begin{align}
0={}&v^2\sigma_2(D^2v)
+v\bigl(|Dv|^2\Delta v-v_iv_jv_{ij}\bigr)\notag\\
&+4v^4\bigl[
\sigma_2(D^2v)|Dv|^2-(\Delta v)v_iv_jv_{ij}+v_iv_{ik}v_{kj}v_j
\bigr]-\frac14.
\label{eq:Freal-expanded}
\end{align}

For later use we record ellipticity and exclude ranks zero and one.

\begin{lemma}\label{lem:real-ellipticity}
Assume the graph is $\Gamma_2$-admissible. Then the linearization of \eqref{eq:Freal-eqn} with respect to $D^2v$ is positive definite. If moreover $D^2v\ge0$, then $\rank D^2v\ge2$.
\end{lemma}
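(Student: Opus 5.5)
The plan is to use the formula \eqref{eq:real-sigma2-final} and the rank-one structure behind it, rather than differentiating $F$ in \eqref{eq:Freal} term by term.

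\textbf{Ellipticity.} Write the left side of \eqref{eq:Freal-eqn} as
\[
\Psi(D^2v)=\frac{W^2}{4}\,\sigma_2(\kappa[u]).
\]
Only $D^2v$ varies in the linearization; $v$ and $Dv$ are frozen, so $W$ and $g$ are fixed. By \eqref{eq:real-shape-v}, the shape operator equals
\[
S=\frac{2}{\sqrt W}\,g^{-1/2}Kg^{-1/2},\qquad K=(-v)D^2v-Dv\otimes Dv .
\]
It is therefore an affine function of $D^2v$ with linear part
\[
H\longmapsto \frac{2(-v)}{\sqrt W}\,g^{-1/2}Hg^{-1/2}.
\]
Since $-v>0$, this linear map is an injective, order-preserving congruence. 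By \eqref{eq:newton-derivative},
\[
D\Psi[H]=\frac{W^2}{4}\cdot\frac{2(-v)}{\sqrt W}\,\tr\!\bigl(T_1(S)\,g^{-1/2}Hg^{-1/2}\bigr)
=c\,\tr\!\bigl(g^{-1/2}T_1(S)g^{-1/2}H\bigr),
\]
with $c>0$. Admissibility gives $S\in\Gamma_2$, and then \eqref{eq:T1-positive} gives $T_1(S)>0$. Hence the coefficient matrix $c\,g^{-1/2}T_1(S)g^{-1/2}$ is positive definite, which is the ellipticity claim.

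\textbf{Rank at least two.} Assume $D^2v\ge0$. Suppose first that $\rank D^2v=0$ at some point. Then every term on the left of \eqref{eq:Freal-expanded} vanishes, while the right side is $1/4$, a contradiction.

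Suppose next that $\rank D^2v=1$, say $D^2v=\mu\,e\otimes e$ with $\mu>0$ and $|e|=1$. Then $\sigma_2(D^2v)=0$ and $T_1(D^2v)=\mu(I-e\otimes e)$. Using \eqref{eq:T1T2},
\[
T_2(D^2v)=0-\mu^2e\otimes e+\mu^2 e\otimes e=0 .
\]
The equation therefore reduces to
\[
v\,\mu\bigl(|Dv|^2-(Dv\cdot e)^2\bigr)=\tfrac14 .
\]
The left side is $\le0$ because $v<0$, $\mu>0$ and $|Dv|^2\ge (Dv\cdot e)^2$. This contradicts the right side being $1/4$.

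So in both cases the constant term $-\tfrac14$ cannot be balanced, and $\rank D^2v\ge2$. The only delicate point is confirming that $T_2$ vanishes identically for rank-one matrices. This follows directly from the explicit form \eqref{eq:T1T2}, or alternatively from Cayley--Hamilton applied to the nonzero $1\times1$ block.
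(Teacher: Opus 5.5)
Your proof is correct and follows the paper's argument. Your ellipticity step writes the left side as $\frac{W^2}{4}\sigma_2(S)$, which equals the paper's $W\sigma_2(g^{-1/2}Kg^{-1/2})$, so you arrive at the same positive-definite coefficient matrix $W(-v)\,g^{-1/2}T_1(g^{-1/2}Kg^{-1/2})g^{-1/2}$. Your rank step excludes rank $\le 1$ the same way the paper does, using $\sigma_2(D^2v)=0$, $T_2(D^2v)=0$ and $vT_1(D^2v)[Dv,Dv]\le 0$.
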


\begin{proof}
With $g$ and $K$ from the proof of Lemma~\ref{lem:real-transform},
\begin{equation}\label{eq:Freal-symmetric}
F(D^2v,Dv,v)+\frac14
=W\sigma_2(g^{-1/2}Kg^{-1/2}).
\end{equation}
The matrix $g^{-1/2}Kg^{-1/2}$ is a positive scalar multiple of the shape operator, hence lies in $\Gamma_2$. For a symmetric variation $\xi$ of $D^2v$,
\begin{align}
D_RF[\xi]
={}&W(-v)T_1(g^{-1/2}Kg^{-1/2}):
(g^{-1/2}\xi g^{-1/2}).
\label{eq:real-linearization}
\end{align}
Because $-v>0$ and $T_1>0$ on $\Gamma_2$, ellipticity follows.

If $D^2v\ge0$ has rank at most one, then $\sigma_2(D^2v)=0$, $T_2(D^2v)=0$, and $T_1(D^2v)[Dv,Dv]\ge0$. Since $v<0$, the left-hand side of
\[
v^2\sigma_2(D^2v)+vT_1(D^2v)[Dv,Dv]+4v^4T_2(D^2v)[Dv,Dv]=\frac14
\]
is nonpositive, a contradiction.
\end{proof}

Now we can prove Theorem~\ref{thm:realn}.

\medskip
\noindent\textbf{Method I: the direct argument in dimension three.}
We first give a direct proof in the three-dimensional case, following the
Ma--Xu approach \cite{MX08}. This method is specific to $n=3$ in its present
form: at a minimum-rank point the Hessian has rank two, and the differentiated
equation can be reduced to a two-variable quadratic form. The maximum
principle then yields the constant-rank conclusion.

By Lemma~\ref{lem:real-ellipticity}, only ranks two and three can occur. Suppose the minimum rank is two. At a minimum-rank point, rotate coordinates so that
\begin{equation}\label{eq:rank2}
D^2v=\diag(\lambda_1,\lambda_2,0),
\qquad \lambda_1,\lambda_2>0.
\end{equation}
Take
\begin{equation}\label{eq:Pdet}
P=\det D^2v.
\end{equation}
As in \cite{MX08}, write $X\sim Y$ when $X-Y$ is controlled by $C(P+|\nabla P|)$ in a minimum-rank neighborhood. Then
\begin{equation}\label{eq:Pderivatives}
P\sim\lambda_1\lambda_2v_{33},
\qquad
P_i\sim\lambda_1\lambda_2v_{33i},
\end{equation}
and
\begin{equation}\label{eq:Pij-main}
P_{ij}\sim
\lambda_1\lambda_2v_{33ij}
-2\lambda_1v_{23i}v_{23j}
-2\lambda_2v_{13i}v_{13j}.
\end{equation}

At \eqref{eq:rank2}, the equation reads
\begin{align}
&v^2\lambda_1\lambda_2
+v\bigl[
\lambda_2v_1^2+\lambda_1v_2^2+(\lambda_1+\lambda_2)v_3^2
\bigr]
+4v^4\lambda_1\lambda_2v_3^2
=\frac14.
\label{eq:rank2eq}
\end{align}
The Hessian derivatives of $F$ needed in the $x_1,x_2$ block are
\begin{align}
F^{11}
&=(-v)\left[(-v)\lambda_2(1+4v^2v_3^2)-(v_2^2+v_3^2)\right],
\label{eq:F11-main}\\
F^{22}
&=(-v)\left[(-v)\lambda_1(1+4v^2v_3^2)-(v_1^2+v_3^2)\right],
\label{eq:F22-main}\\
F^{12}&=-vv_1v_2.
\label{eq:F12-main}
\end{align}
The scalar derivative is
\begin{align}
F_v
={}&2v\lambda_1\lambda_2
+\lambda_2v_1^2+\lambda_1v_2^2+(\lambda_1+\lambda_2)v_3^2
+16v^3\lambda_1\lambda_2v_3^2.
\label{eq:Fv-main}
\end{align}

Set
\begin{equation}\label{eq:thirdvars}
\xi=v_{113},
\qquad
\eta=v_{223},
\qquad
\zeta=v_{123}.
\end{equation}
Since $v_{13}=v_{23}=v_{33}=0$ at the diagonal point, differentiating \eqref{eq:Freal-eqn} in the $x_3$ direction gives
\begin{equation}\label{eq:firstdiff-main}
F^{11}\xi+F^{22}\eta+2F^{12}\zeta+F_vv_3=0.
\end{equation}
Ellipticity gives $F^{22}>0$, hence
\begin{equation}\label{eq:eta-main}
\eta=-\frac{F^{11}\xi+2F^{12}\zeta+F_vv_3}{F^{22}}.
\end{equation}
This is the first elimination.

Differentiating once more in the $x_3$ direction and using $v_{i3}=0$ together with $v_{i33}=v_{33i}\sim0$, we obtain
\begin{equation}\label{eq:seconddiff-main}
F^{ij}v_{ij33}
+F_{RR}[v_{ij3},v_{kl3}]
+2v_3F_{Rv}[v_{ij3}]
+F_{vv}v_3^2\sim0.
\end{equation}
Let us explain which terms survive in the second differentiation. At the diagonal minimum-rank point we have
\(v_{13}=v_{23}=v_{33}=0\), while
\(v_{i33}=v_{33i}\sim0\) in the sense introduced above.
Therefore all terms containing a first variation of the gradient
variable \(Dv\), as well as the terms containing \(v_{33i}\), are
absorbed into \(C(P+|\nabla P|)\). Since \(F\) has no explicit
\(x\)-dependence, the only quadratic terms which remain at the
principal level are the second variation with respect to the Hessian,
the mixed Hessian--\(v\) variation, and the pure \(v\)-variation.
This gives precisely the three terms displayed in \eqref{eq:seconddiff-main}.
A direct differentiation of \eqref{eq:Freal} yields
\begin{equation}\label{eq:Frr-main}
F_{RR}[v_{ij3},v_{kl3}]
=2v^2(1+4v^2v_3^2)(\xi\eta-\zeta^2),
\end{equation}
\begin{align}
F_{Rv}[v_{ij3}]
={}&\bigl(v_2^2+v_3^2+2v\lambda_2+16v^3\lambda_2v_3^2\bigr)\xi
\notag\\
&+\bigl(v_1^2+v_3^2+2v\lambda_1+16v^3\lambda_1v_3^2\bigr)\eta
-2v_1v_2\zeta,
\label{eq:Frv-main}
\end{align}
and
\begin{equation}\label{eq:Fvv-main}
F_{vv}=2\lambda_1\lambda_2(1+24v^2v_3^2).
\end{equation}
We next substitute the twice differentiated equation into the
second derivative of the test function \(P=\det D^2v\).
By \eqref{eq:Pij-main}, the term containing \(v_{33ij}\) appears with the factor
\(\lambda_1\lambda_2\), and contracting it with the linearized
coefficients \(F^{ij}\) allows us to replace it using  \eqref{eq:seconddiff-main}.
The remaining two terms in \eqref{eq:Pij-main}, involving
\(v_{13i}\) and \(v_{23i}\), contribute the quadratic expressions
coming from the two positive eigenvalue directions.
After collecting all principal quadratic terms and discarding only
terms controlled by \(C(P+|\nabla P|)\), we have
\begin{equation}\label{eq:Q3-main}
-F^{ij}P_{ij}\sim\mathcal Q(\xi,\eta,\zeta),
\end{equation}
where
\begin{align}
\mathcal Q(\xi,\eta,\zeta)
={}&\lambda_1\lambda_2
\Bigl[
2v^2(1+4v^2v_3^2)(\xi\eta-\zeta^2)
+2v_3F_{Rv}[v_{ij3}]+F_{vv}v_3^2
\Bigr]
\notag\\
&+2\lambda_1\left(F^{11}\zeta^2+2F^{12}\zeta\eta+F^{22}\eta^2\right)
\notag\\
&+2\lambda_2\left(F^{11}\xi^2+2F^{12}\xi\zeta+F^{22}\zeta^2\right).
\label{eq:Q3-expanded-main}
\end{align}

We now eliminate $\lambda_2$ using the original equation. To keep the final polynomial readable, set
\begin{align}
D&=(-v)\lambda_1(1+4v^2v_3^2)-(v_1^2+v_3^2),
\label{eq:Dmain}\\
U&=1+4(-v)\lambda_1(v_2^2+v_3^2),
\label{eq:Umain}\\
N_0&=1+4v_3^2\bigl(v^2+v_1^2+v_2^2+v_3^2\bigr),
\label{eq:N0main}\\
N_1&=N_0+4v_1^2v_2^2.
\label{eq:N1main}
\end{align}
By \eqref{eq:F22-main}, $F^{22}=(-v)D$, hence
\begin{equation}\label{eq:Dpositive-main}
D>0.
\end{equation}
Collecting the $\lambda_2$ terms in \eqref{eq:rank2eq} gives
\begin{equation}\label{eq:lambda2-main}
\lambda_2=\frac{U}{4(-v)D}.
\end{equation}
Substitute \eqref{eq:eta-main} and \eqref{eq:lambda2-main} into \eqref{eq:Q3-expanded-main}. After collecting terms, the reduced two-variable polynomial is
\begin{align}
\mathcal Q_{\mathrm{red}}(\xi,\zeta)
={}&\frac{N_1}{8D^2}\xi^2
+\frac{v_1v_2}{D}\xi\zeta
+\frac12\zeta^2
\notag\\
&-\frac{\lambda_1v_3\bigl(N_1+4v^2v_3^2U\bigr)}{4(-v)D^2}\xi
-\frac{\lambda_1v_1v_2v_3}{(-v)D}\zeta
\notag\\
&+\frac{\lambda_1^2v_3^2}{8v^2D^2}
\Bigl[
1+4(v_1^2+v_3^2)(v_2^2+v_3^2)
+4v^2v_3^2(3U^2+2U+1)
\Bigr].
\label{eq:Qred-main}
\end{align}
Its quadratic coefficient matrix is
\begin{equation}\label{eq:Hmatrix-main}
\begin{pmatrix}
\dfrac{N_1}{8D^2}&\dfrac{v_1v_2}{2D}\\[2mm]
\dfrac{v_1v_2}{2D}&\dfrac12
\end{pmatrix},
\end{equation}
whose determinant is
\begin{equation}\label{eq:Hdet-main}
\frac{N_1}{16D^2}-\frac{v_1^2v_2^2}{4D^2}
=\frac{N_0}{16D^2}>0.
\end{equation}
Thus the quadratic part is positive definite.  In particular, for the fixed lower-order quantities
\(v\), \(Dv\), and \(\lambda_1\), the polynomial
\(Q_{\mathrm{red}}(\xi,\zeta)\) has a unique global minimum.
The value in \eqref{eq:Qmin-main} is obtained by solving the corresponding
two-dimensional linear system for the minimizing pair
\((\xi,\zeta)\) and substituting it back into \eqref{eq:Qred-main}.
Notice that \(D>0\) by \eqref{eq:Dpositive-main} and \(N_0>0\) by its definition, so
the denominator in \eqref{eq:Qmin-main}  is positive. The numerator is manifestly
nonnegative, and hence the minimum is nonnegative. Completing the square gives the exact minimum
\begin{equation}\label{eq:Qmin-main}
\min_{\xi,\zeta}\mathcal{Q}_{\mathrm{red}}(\xi,\zeta)
=
\frac{
\lambda_1^2v_3^4U^2
\left[3+4v_3^2\left(3|Dv|^2+2v^2\right)\right]
}{
2D^2N_0
}
\ge 0.
\end{equation}

Therefore
\[
F^{ij}P_{ij}\le C(P+|\nabla P|).
\]
The strong minimum principle implies that the zero set of $P$ is open in a minimum-rank neighborhood. It is closed as well; connectedness yields constant rank. This proves Theorem~\ref{thm:realn} for n=3.

\medskip
\noindent\textbf{Method II: the inverse-convexity argument in arbitrary
dimension.}
We now give the proof in arbitrary dimension. Instead of relying on the
dimension-three third-derivative reduction, we verify the structural
inverse-matrix convexity condition in the Bian--Guan theorem. The main
algebraic input is the hyperbolic-polynomial inverse-convexity theorem of
Li--Ma--Salani \cite{LMS26}. This argument applies for every $n\ge3$ and, in
particular, also provides an alternative proof in dimension three. Ellipticity follows from Lemma~\ref{lem:real-ellipticity}, while
\begin{equation}\label{eq:F0real}
F(0,p,z)=-\frac14\neq0.
\end{equation}
We verify the level-set condition in Theorem~\ref{thm:BG}.

Fix $p\neq0$ and put
\begin{equation}\label{eq:alpha-real-method2}
\alpha=\frac{p}{|p|},
\qquad
Q=I-\alpha\otimes\alpha.
\end{equation}
Let $A\in\Sym_{++}(n)$ be the inverse-matrix variable and let $z<0$. Set
\begin{equation}\label{eq:Bscale-real}
B=\frac{A}{-z}.
\end{equation}
Then
\[
A^{-1}=\frac1{-z}B^{-1}.
\]
By the homogeneity of $\sigma_2$, $T_1$, and $T_2$, the inequality $F(A^{-1},p,z)\le0$ is equivalent to
\begin{equation}\label{eq:real-level1}
\sigma_2(B^{-1})
-|p|^2\tr(QB^{-1})
+4|p|^2z^2T_2(B^{-1})[\alpha,\alpha]
\le\frac14.
\end{equation}
Since $\tr(QB^{-1})>0$, this is equivalent to
\begin{equation}\label{eq:Hreal-level}
\mathcal H_p(B,z)\le|p|^2,
\end{equation}
where
\begin{align}
\mathcal H_p(B,z)
={}&
\frac{\sigma_2(B^{-1})-\frac14}{\tr(QB^{-1})}
+4|p|^2z^2
\frac{T_2(B^{-1})[\alpha,\alpha]}{\tr(QB^{-1})}.
\label{eq:Hreal}
\end{align}
The first term is convex by Theorem~\ref{thm:LMS} with $\lambda=1/4$.

For the second term define the same hyperbolic polynomial as in the Li--Ma--Salani proof,
\begin{equation}\label{eq:g-real-second}
g(B)=D_Q\det B.
\end{equation}
Then
\begin{equation}\label{eq:g-real-first-id}
g(B)=\det B\,\tr(QB^{-1}).
\end{equation}
Differentiating once more in the direction $Q$ gives
\begin{align}
D_Qg(B)
&=D_Q^2\det B
\notag\\
&=\det B\left[(\tr(QB^{-1}))^2-\tr(QB^{-1}QB^{-1})\right].
\label{eq:DQg-real}
\end{align}
To identify the bracket in \eqref{eq:DQg-real}, choose orthonormal
coordinates so that $\alpha=e_n$. Then
\[
Q=\operatorname{diag}(1,\ldots,1,0).
\]
Writing
\[
B^{-1}=
\begin{pmatrix}
A & b\\
b^T & c
\end{pmatrix},
\]
we have
\[
\operatorname{tr}(QB^{-1})=\operatorname{tr}A,
\qquad
\operatorname{tr}(QB^{-1}QB^{-1})=\operatorname{tr}(A^2).
\]
Moreover, from
$T_2(M)=\sigma_2(M)I-\sigma_1(M)M+M^2$,
\[
T_2(B^{-1})[\alpha,\alpha]
=(T_2(B^{-1}))_{nn}
=\sigma_2(A).
\]
Hence
\[
\bigl(\operatorname{tr}(QB^{-1})\bigr)^2
-\operatorname{tr}(QB^{-1}QB^{-1})
=2T_2(B^{-1})[\alpha,\alpha].
\]
Since the equation holds regardless of the choice of coordinates, the bracket in \eqref{eq:DQg-real} equals $2T_2(B^{-1})[\alpha,\alpha]$. Hence
\begin{equation}\label{eq:Theta-real}
\Theta(B):=
\frac{T_2(B^{-1})[\alpha,\alpha]}{\tr(QB^{-1})}
=\frac12\frac{D_Qg(B)}{g(B)}.
\end{equation}
The polynomial $g=D_Q\det$ is hyperbolic by Theorem~\ref{thm:derivhyper}. By Theorem~\ref{thm:logderivative}, $\Theta$ is convex. Moreover
\begin{equation}\label{eq:Theta-real-hom}
\Theta(tB)=t^{-1}\Theta(B).
\end{equation}
By Lemma~\ref{lem:perspective}, with $a=-z>0$,
\[
(B,a)\longmapsto a^2\Theta(B)
\]
is jointly convex. Therefore $\mathcal H_p(B,-a)$ is jointly convex in $(B,a)$. Since $B^{-1}>0$, one has $T_2(B^{-1})[\alpha,\alpha]\ge0$; hence
\begin{equation}\label{eq:Hreal-monotone}
\frac{\partial}{\partial a}\mathcal H_p(B,-a)
=8|p|^2a\Theta(B)\ge0.
\end{equation}

We now transfer this convexity back to the original variables. Let two points $(A_i,z_i)$ belong to the level set and put
\begin{equation}\label{eq:aiBi-real}
a_i=-z_i>0,
\qquad
B_i=\frac{A_i}{a_i},
\qquad i=1,2.
\end{equation}
For $0\le\theta\le1$, set
\begin{equation}\label{eq:Atheta-real}
A_\theta=\theta A_1+(1-\theta)A_2,
\qquad
a_\theta=\theta a_1+(1-\theta)a_2.
\end{equation}
Then
\begin{equation}\label{eq:weightedB-real}
\frac{A_\theta}{a_\theta}
=\mu_1B_1+\mu_2B_2,
\end{equation}
where
\begin{equation}\label{eq:mureal}
\mu_1=\frac{\theta a_1}{a_\theta},
\qquad
\mu_2=\frac{(1-\theta)a_2}{a_\theta},
\qquad
\mu_1+\mu_2=1.
\end{equation}
Define
\begin{equation}\label{eq:ahat-real}
\widehat a=\mu_1a_1+\mu_2a_2
=\frac{\theta a_1^2+(1-\theta)a_2^2}{a_\theta}.
\end{equation}
A direct calculation gives
\begin{equation}\label{eq:ahat-diff-real}
\widehat a-a_\theta
=\frac{\theta(1-\theta)(a_1-a_2)^2}{a_\theta}\ge0.
\end{equation}
Using monotonicity in $a$, then joint convexity, we obtain
\begin{align}
\mathcal H_p\left(\frac{A_\theta}{a_\theta},-a_\theta\right)
&\le
\mathcal H_p(\mu_1B_1+\mu_2B_2,-\widehat a)
\notag\\
&\le
\mu_1\mathcal H_p(B_1,-a_1)
+\mu_2\mathcal H_p(B_2,-a_2)
\le |p|^2.
\label{eq:return-real}
\end{align}
This proves convexity of the original $(A,z)$ level set for $p\neq0$.

If $p=0$, the level-set condition becomes
\[
z^2\sigma_2(A^{-1})\le\frac14.
\]
After the same scaling $B=A/(-z)$ it becomes
\[
\sigma_2(B^{-1})\le\frac14.
\]
Fixing any unit vector $\alpha$ and $Q=I-\alpha\otimes\alpha$, this is the zero sublevel set of the convex function
\[
B\longmapsto\frac{\sigma_2(B^{-1})-\frac14}{\tr(QB^{-1})}.
\]
Thus the $B$-set is convex, and \eqref{eq:weightedB-real} transfers convexity back to $(A,z)$.

All hypotheses of Theorem~\ref{thm:BG} are verified, and Theorem~\ref{thm:realn} follows.
\begin{remark}\label{rem:mean-curvature}
The same square-root and inverse-convexity mechanism explains the exponent in the mean-curvature equation. With the non-normalized convention
\[
H[u]=\operatorname{div}\left(\frac{Du}{\sqrt{1+|Du|^2}}\right),
\]
one has
\[
H[u]
=\frac{(1+|Du|^2)\Delta u-u_iu_ju_{ij}}{(1+|Du|^2)^{3/2}}.
\]
Consider $H[u]=\omega^b$ and set $u=-v^2$. The constant rank theorem has been proved for $b=-3$ in \cite{KL87}.
Since
\[
Du=-2vDv,
\qquad
D^2u=-2vD^2v-2Dv\otimes Dv,
\]
a direct substitution gives
\begin{align}
&(-v)\left[\Delta v+4v^2\bigl(|Dv|^2\Delta v-v_iv_jv_{ij}\bigr)\right]-|Dv|^2
\notag\\
&\hspace{35mm}
=\frac12\bigl(1+4v^2|Dv|^2\bigr)^{(b+3)/2}.
\label{eq:mean-general}
\end{align}
Hence the right-hand side is constant if and only if $b=-3$. For this exponent the equation is
\begin{equation}\label{eq:mean-v}
F_H(D^2v,Dv,v)=0,
\end{equation}
with
\begin{align}
F_H(R,p,z)
={}&(-z)\tr\left(\left[I+4z^2(|p|^2I-p\otimes p)\right]R\right)
-|p|^2-\frac12.
\label{eq:mean-F}
\end{align}
The Bian--Guan level-set condition is especially simple. Put $a=-z>0$ and $B=A/a$. Then
\begin{align}\label{eq:mean-level}
F_H(A^{-1},p,-a)\le0
\quad\Longleftrightarrow\quad
&\tr(B^{-1})\notag
+4a^2\tr\bigl((|p|^2I-p\otimes p)B^{-1}\bigr)
\le |p|^2+\frac12.
\end{align}
Put $P=|p|^2I-p\otimes p$. Both functions
\[
B\mapsto\tr(B^{-1})=\frac{D_I\det B}{\det B}
\]
and
\[
B\mapsto\tr(PB^{-1})=\frac{D_P\det B}{\det B}
\]
are convex by the hyperbolic logarithmic-derivative principle. The second one is homogeneous of degree $-1$, so $a^2\tr(PB^{-1})$ is jointly convex by the perspective lemma. It is also nondecreasing in $a$. The same reweighted argument \eqref{eq:weightedB-real}--\eqref{eq:ahat-diff-real} therefore proves the original $(A,z)$ level-set convexity. Consequently, if $D^2v\ge0$, the Bian--Guan theorem gives constant rank for the transformed mean-curvature equation with exponent $-3$. If one uses normalized mean curvature $H/n$, only the constant $1/2$ changes; the exponent $-3$ does not.
\end{remark}

The constant-rank theorem obtained above is a local statement: once
$D^2v\ge 0$ is known, degeneration of its rank cannot occur at an
isolated interior point. We now use this result as the closedness
mechanism in a global deformation argument. The remaining ingredients
are strict convexity near the boundary, a strictly convex solution on
the unit ball, solvability and uniform estimates along a deformation
of the domain, and local stability of the corresponding solutions.

\subsection{Boundary strict convexity}
\label{subsubsec:real-boundary}

We use the following standard boundary convexity lemma due to
Korevaar, see \cite{Korevaar1983}. 

\begin{lemma}
\label{lem:boundary-convexity-general}
Let $\Omega\subset\mathbb R^N$ be bounded, smooth, and uniformly
strictly convex, and let $u\in C^2(\overline{\Omega})$ satisfy
$u<0$ in $\Omega$, $u=0$ on $\partial\Omega$, and
$Du\cdot\mathbf n>0$ on $\partial\Omega$, where $\mathbf n$ is the
exterior unit normal.

Let $f\in C^2((-\infty,0))$ satisfy
\[
f'>0,\qquad f''>0,\qquad
\lim_{t\to0^-}\frac{f'(t)}{f''(t)}=0.
\]
Then there exists $\delta>0$ such that $f(u)$ is strictly convex in
the boundary strip
\[
\{x\in\Omega:0<\operatorname{dist}(x,\partial\Omega)<\delta\}.
\]
\end{lemma}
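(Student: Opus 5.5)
We plan to compute the Hessian of $w=f(u)$ directly and show that it is positive definite near $\partial\Omega$. The decomposition we use splits $\R^N$ into the normal direction $\mathbf n$ and the tangent space to the level sets of $u$. Differentiating twice gives
\[
D^2 w = f'(u)\,D^2u + f''(u)\,Du\otimes Du .
\]
Since $Du\cdot\mathbf n>0$ on $\partial\Omega$, $u\in C^2(\overline\Omega)$ and $\partial\Omega$ is compact, there is $\delta_0>0$ with $|Du|\ge c_0>0$ on the strip $S_{\delta_0}=\{0<\operatorname{dist}(x,\partial\Omega)<\delta_0\}$. On this strip we write $\nu=Du/|Du|$ and decompose any unit vector as $\xi=s\nu+\tau$ with $\tau\perp\nu$ and $s^2+|\tau|^2=1$.

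\textbf{Step 1: tangential convexity of the level sets.} First we show that the tangential part of $D^2u$ is uniformly positive near the boundary. On $\partial\Omega$ we have $u=0$ and $Du=|Du|\mathbf n$. Hence for $\tau$ tangent to $\partial\Omega$,
\[
D^2u[\tau,\tau]=|Du|\,\mathrm{II}_{\partial\Omega}(\tau,\tau)\ge c_0\kappa_0|\tau|^2 ,
\]
where $\kappa_0>0$ is a lower bound for the principal curvatures of $\partial\Omega$ (uniform convexity). Because $D^2u$ and $\nu$ are continuous up to the boundary and $\nu=\mathbf n$ on $\partial\Omega$, shrinking $\delta_0$ gives
\[
D^2u[\tau,\tau]\ge c_1|\tau|^2 \qquad\text{for } \tau\perp\nu \text{ in } S_{\delta_0},
\]
with $c_1=c_0\kappa_0/2$. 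We also fix $M=\sup_{\overline\Omega}|D^2u|$.

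\textbf{Step 2: the quadratic form.} For $\xi=s\nu+\tau$,
\[
D^2w[\xi,\xi]\ \ge\ f'(u)\bigl(c_1|\tau|^2-2M|s||\tau|-Ms^2\bigr)+f''(u)|Du|^2s^2 .
\]
Dividing by $f''(u)>0$ and setting $\epsilon(u)=f'(u)/f''(u)>0$, this becomes
\[
\frac{D^2w[\xi,\xi]}{f''(u)}\ \ge\ \epsilon\,c_1|\tau|^2-2\epsilon M|s||\tau|+\bigl(c_0^2-\epsilon M\bigr)s^2 .
\]
Viewed as a quadratic form in $(|s|,|\tau|)$, it is positive definite provided
\[
\epsilon c_1>0,\qquad \epsilon c_1\,(c_0^2-\epsilon M)>\epsilon^2M^2,
\]
that is, provided $c_1c_0^2>\epsilon\,(c_1M+M^2)$. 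The first condition holds automatically, and the second holds once $\epsilon$ is small.

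\textbf{Step 3: choosing $\delta$.} The hypothesis $\lim_{t\to0^-}f'(t)/f''(t)=0$ says exactly that $\epsilon(u)\to0$ as $u\to0^-$. Since $u$ is continuous on $\overline\Omega$, vanishes on $\partial\Omega$ and is negative inside, $u\to0$ uniformly as $\operatorname{dist}(x,\partial\Omega)\to0$. We therefore choose $\delta\le\delta_0$ so small that the condition of Step 2 holds throughout $S_\delta$. This gives $D^2w>0$ on $S_\delta$, so $w$ is strictly convex there, interpreted locally, since the strip itself need not be convex.

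\textbf{Main obstacle.} The delicate step is the interplay in Step 2. The term $f''(u)|Du|^2s^2$ dominates only in the normal direction, while tangential positivity comes solely from $f'(u)D^2u$, which is of the smaller order $f'(u)$. The mixed term $2f'(u)M|s||\tau|$ must be absorbed, and this is possible only because both the cross term and the tangential term carry the same small factor $\epsilon$, leaving the decisive ratio
\[
\frac{\epsilon^2M^2}{\epsilon c_1\cdot c_0^2}=O(\epsilon).
\]
Keeping the constants uniform in Step 1, so that $c_0,c_1,M$ are independent of $\delta$, is what makes this absorption work.
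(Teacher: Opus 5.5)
Your proof is correct and complete. The paper gives no proof of this lemma; it attributes it to Korevaar \cite{Korevaar1983} and then uses it in exactly the pointwise form you establish, $D^2f(u)>0$ on the strip (Lemma~\ref{lem:real-boundary-strict}). Your normal--tangential splitting is the standard self-contained way to obtain that form. It has three ingredients: tangential positivity $D^2u[\tau,\tau]=\partial_{\mathbf n}u\,\mathrm{II}(\tau,\tau)$ on $\partial\Omega$, transported into the strip; the normal direction carried by $f''(u)|Du|^2$; and the cross term absorbed because $\epsilon=f'/f''$ multiplies both it and the tangential term.

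What the explicit argument buys over the bare citation is a precise list of what $\delta$ depends on. Besides $\min_{\partial\Omega}\partial_{\mathbf n}u$, the curvature bound $\kappa_0$, $\sup|D^2u|$ and the rate at which $f'/f''\to0$, Step~1 also uses a modulus of continuity of $D^2u$ up to $\partial\Omega$. That last dependence cannot be dropped. Take $u_0=(|x|^2-1)/2$ on $B_1$ and add a bump of height $O(k^{-2})$ and width $O(k^{-1})$ centred at distance $2/k$ from the boundary. For large $k$ this keeps $u<0$, leaves $u$ and $\partial_{\mathbf n}u$ on $\partial\Omega$ unchanged and $\|u\|_{C^2}$ bounded, yet can make $D^2u$ negative definite at the centre. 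So no strip width independent of $k$ exists under a bare $C^2$ bound.

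Consequently, in Subsection~\ref{subsubsec:proof-real-strict}, the paper says the strip is uniform in $s$ because of ``a uniform $C^2$ bound for $u_s$''; the input actually needed is equicontinuity of $D^2u_s$ near $\partial\Omega_s$. The uniform $C^{2,\alpha_0}$ estimate \eqref{eq:uniform-c2a} supplies this, so the paper's conclusion stands.
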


We now apply Lemma~\ref{lem:boundary-convexity-general} to the
transformation used in this paper. Let
\[
f(t)=-\sqrt{-t},\qquad t<0.
\]
Then
\[
f'(t)=\frac{1}{2\sqrt{-t}}>0,
\qquad
f''(t)=\frac{1}{4(-t)^{3/2}}>0,
\]
and
\[
\frac{f'(t)}{f''(t)}
=2(-t)\longrightarrow0
\qquad\text{as }t\to0^-.
\]
Thus it remains only to verify the Hopf condition for the solution.

\begin{lemma}
\label{lem:real-boundary-strict}
Let $u$ be an admissible solution of \eqref{eq:real-main}, and set
$v=-\sqrt{-u}$. Then there exists $\delta>0$ such that
$D^2v>0$ whenever $0<\operatorname{dist}(x,\partial\Omega)<\delta$.
\end{lemma}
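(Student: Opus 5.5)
The plan is to reduce Lemma~\ref{lem:real-boundary-strict} to Lemma~\ref{lem:boundary-convexity-general} with $f(t)=-\sqrt{-t}$. The computation preceding the statement already checks the hypotheses on $f$. What remains is to show $u<0$ in $\Omega$ and the Hopf condition $Du\cdot\mathbf n>0$ on $\partial\Omega$. The regularity $u\in C^2(\overline\Omega)$ will be taken from the a priori/regularity framework for admissible solutions; otherwise one works on slightly shrunken convex level domains, using the uniform $C^{1,1}$ bounds.

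\textbf{Negativity and subharmonicity.} A $\Gamma_2$-admissible graph has $\sigma_1(\kappa)>0$, so the mean curvature
\[
H[u]=\omega^{-3}\bigl((1+|Du|^2)\Delta u-u_iu_ju_{ij}\bigr)>0 .
\]
Hence $u$ is a strict subsolution of the uniformly elliptic linear operator
\[
L\phi=(\omega^2\delta_{ij}-u_iu_j)\phi_{ij},
\]
whose coefficients are bounded since $Du$ is bounded ($u\in C^{1,1}(\overline\Omega)$). The maximum principle with $u=0$ on $\partial\Omega$ gives $u\le0$. The strong maximum principle then gives $u<0$ in $\Omega$, since $u\equiv0$ would contradict $Lu>0$.

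\textbf{Hopf condition.} Apply the Hopf boundary lemma to $Lu\ge0$ at each boundary point. $\Omega$ is smooth and uniformly convex, so it satisfies a uniform interior sphere condition. Since $u<0$ inside and $u=0$ on the boundary, this yields $\partial_{\mathbf n}u>0$ pointwise on $\partial\Omega$.

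\textbf{Main obstacle: uniform positivity.} Lemma~\ref{lem:boundary-convexity-general} needs the Hopf bound uniformly, i.e.\ $\partial_{\mathbf n}u\ge c_0>0$, to produce a single $\delta$. Pointwise positivity together with continuity of $Du$ up to the compact set $\partial\Omega$ already gives this. If only $C^{1,1}$ is available, a barrier gives a quantitative bound instead. I would first try comparing $u$, near each boundary point $x_0$, with the radial barrier
\[
w=\varepsilon\bigl(e^{-\mu|x-y|^2}-e^{-\mu R^2}\bigr)
\]
on the annulus of the interior ball $B_R(y)$. One chooses $\mu$ large so that $Lw>0$, using only the bound on $|Du|$. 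One then takes $\varepsilon$ small, using a uniform negative upper bound on $u$ on the inner sphere, which follows from the strong maximum principle and compactness. This gives
\[
\partial_{\mathbf n}u(x_0)\ge -\partial_{\mathbf n}w(x_0)=c_0>0
\]
uniformly in $x_0$. With this, Lemma~\ref{lem:boundary-convexity-general} applies and gives $D^2v>0$ in a strip $0<\operatorname{dist}(x,\partial\Omega)<\delta$.
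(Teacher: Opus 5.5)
Your proposal is correct and follows essentially the same route as the paper: both obtain $u<0$ from the maximum principle and $\partial_{\mathbf n}u>0$ from the Hopf lemma, then apply Korevaar's boundary lemma with $f(t)=-\sqrt{-t}$. Your use of $\sigma_1(\kappa)>0$ to treat $u$ as a strict subsolution of the linear operator $(\omega^2\delta_{ij}-u_iu_j)\partial_{ij}$ is a clean way to make precise the paper's brief appeal to ellipticity.

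Two side remarks can be trimmed. The barrier step is redundant, since continuity of $Du$ on $\overline\Omega$ (already implied by $C^{1,1}(\overline\Omega)$) makes the Hopf bound uniform. The fallback to ``shrunken convex level domains'' should be dropped, because the level sets of $u$ are not known to be convex a priori; in any case, $C^2(\overline\Omega)$ regularity is available for the solutions $u_s$ to which the lemma is actually applied.
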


\begin{proof}
Since the graph is $\Gamma_2$-admissible, the equation is elliptic
on the admissible branch. Together with the zero Dirichlet condition
and the strong maximum principle, this gives $u<0$ in $\Omega$.
The Hopf boundary point lemma then yields
$\partial_{\mathbf n}u>0$ on $\partial\Omega$.
The assertion now follows directly from
Lemma~\ref{lem:boundary-convexity-general} with
$f(t)=-\sqrt{-t}$.
\end{proof}

\subsection{The unit ball}
\label{subsubsec:real-ball}

The boundary lemma above provides strict convexity near
$\partial\Omega$ for any sufficiently regular admissible solution,
but it does not provide a globally strictly convex solution from
which the deformation can start. We now construct such a reference
solution on the unit ball. Before doing so, we rewrite
\eqref{eq:real-main} in a form with constant right-hand side; this
form will also be used in the continuity argument of the next
subsection.

To obtain this formulation, we first isolate the algebraic identity already
obtained in \eqref{eq:real-metric-sigma2}. Replacing $2vDv$ there by a general vector $\vartheta$, we are naturally led to the following
notation.

For $M\in\mathrm{Sym}(n)$ and $\vartheta\in\mathbb R^n$, define
\begin{equation}\label{eq:E-definition}
\mathcal E(M,\vartheta)
:=
\sigma_2(M)+T_2(M)[\vartheta,\vartheta].
\end{equation}
Set
\[
\widehat M_\vartheta
=
(I+\vartheta\otimes\vartheta)^{-1/2}
M
(I+\vartheta\otimes\vartheta)^{-1/2}.
\]
Since
$(I+\vartheta\otimes\vartheta)^{-1}M$
is similar to $\widehat M_\vartheta$, the two matrices have the same
elementary symmetric functions.
By the same calculation as in \eqref{eq:real-metric-sigma2}, we have
\[
(1+|\vartheta|^2)\sigma_2(\widehat M_\vartheta)
=
\mathcal E(M,\vartheta).
\]

For a graph, the symmetric representative of the Weingarten map is
\[
\frac{1}{\sqrt{1+|Du|^2}}
(I+Du\otimes Du)^{-1/2}
D^2u
(I+Du\otimes Du)^{-1/2}.
\]
It follows that
\[
(1+|Du|^2)^2\sigma_2(\kappa[u])
=
\mathcal E(D^2u,Du).
\]
Consequently, equation \eqref{eq:real-main} is exactly equivalent
to
\begin{equation}\label{eq:real-main-1}
\mathcal E(D^2u,Du)=1.
\end{equation}

We also record the linearization of \eqref{eq:real-main-1}, which will
be used repeatedly in the deformation argument. For
$N\in\mathrm{Sym}(n)$,
\begin{equation}\label{eq:E-Hessian-linearization}
D_M\mathcal E(M,\vartheta)[N]
=
(1+|\vartheta|^2)
T_1(\widehat M_\vartheta):
\Bigl[
(I+\vartheta\otimes\vartheta)^{-1/2}
N
(I+\vartheta\otimes\vartheta)^{-1/2}
\Bigr].
\end{equation}
If the graph is $\Gamma_2$-admissible, then
$\widehat M_\vartheta\in\Gamma_2$, and hence
$T_1(\widehat M_\vartheta)>0$. Thus
\eqref{eq:real-main-1} is elliptic on the admissible branch.

Moreover, for $\chi\in\mathbb R^n$,
\[
D_\vartheta\mathcal E(M,\vartheta)[\chi]
=
2T_2(M)[\vartheta,\chi].
\]
Therefore the linearized operator at a smooth admissible solution has
the form
\begin{equation}\label{eq:real-linearized}
\mathcal L\phi=a^{ij}\phi_{ij}+d^k\phi_k,
\end{equation}
with $(a^{ij})>0$ and no zero-order term.

We now turn to the unit ball. The following lemma gives the initial
point of the domain deformation and does not use the constant-rank
theorem.

\begin{lemma}
\label{lem:real-ball}
On the unit ball $B_1$, the Dirichlet problem
\[
\mathcal E(D^2u,Du)=1
\quad\text{in }B_1,
\qquad
u=0
\quad\text{on }\partial B_1
\]
admits a smooth radial $\Gamma_2$-admissible solution $u_0$.
If
$v_0=-\sqrt{-u_0},$
then
$D^2v_0>0\,\,\text{in }B_1.$
\end{lemma}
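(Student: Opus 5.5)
The plan is to reduce the Dirichlet problem on $B_1$ to an ODE that can be solved essentially explicitly, and then to read off admissibility and the convexity of $v_0$ from that ODE. I will work with the geometric form \eqref{eq:real-main}, which is equivalent to $\mathcal E(D^2u,Du)=1$ as shown above. For a radial function $u=u(r)$, write $\omega=\sqrt{1+u'^2}$ and introduce the sine of the inclination angle,
\[
\varphi=\frac{u'}{\omega}\in(-1,1).
\]
The principal curvatures of the graph are then $\kappa_1=\varphi'$ in the radial direction and $\kappa_2=\cdots=\kappa_n=\varphi/r$ in the $n-1$ tangential directions. Hence
\[
\sigma_2(\kappa)=(n-1)\frac{\varphi\varphi'}{r}+\binom{n-1}{2}\frac{\varphi^2}{r^2}
=\frac{n-1}{2r^{n-1}}\bigl(r^{n-2}\varphi^2\bigr)'.
\]
Since $\omega^{-4}=(1-\varphi^2)^2$, the unknown $\psi=\varphi^2$ satisfies the first-order equation
\[
\psi'+\frac{n-2}{r}\psi=\frac{2r}{n-1}(1-\psi)^2,\qquad \psi(0)=0 .
\]

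\textbf{Step 1: solve the ODE and keep $\psi<1$.} Substituting the ansatz $\psi\sim cr^2$ forces $c=2/(n(n-1))$. I would obtain a solution that is smooth and even in $r$ by writing $\psi=r^2h(r)$ and solving the resulting regular-singular problem for $h$ near $0$, for instance as the fixed point of
\[
\psi(r)=\frac{2}{(n-1)r^{n-2}}\int_0^r s^{n-1}(1-\psi)^2\,ds .
\]
The solution then extends as long as $\psi<1$. That constraint is guaranteed by the comparison $(r^{n-2}\psi)'\le \tfrac{2}{n-1}r^{n-1}$, which gives
\[
0<\psi\le \frac{2r^2}{n(n-1)}\le\frac13\quad\text{on }(0,1].
\]

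\textbf{Step 2: recover $u_0$.} Set $\varphi=\sqrt\psi=r\sqrt h$, which is smooth and odd, take $u'=\varphi/\sqrt{1-\varphi^2}$, and define $u_0(r)=-\int_r^1u'(s)\,ds$. Then $u_0$ is smooth in $B_1$, with $u_0=0$ on $\partial B_1$ and $u_0<0$ inside. For admissibility, $\kappa_2=\varphi/r>0$ and $\sigma_2=\omega^{-4}>0$. Since $\sigma_1>0$ follows once all tangential curvatures are positive and $\sigma_2>0$, this places the curvature vector in $\Gamma_2$. (Strictly speaking this gives $\sigma_1>0$ only if we argue it directly, but $\kappa_1>0$ in Step 3 makes it immediate.)

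\textbf{Step 3: convexity of $v_0$, the main obstacle.} The radial function $v_0$ is strictly convex exactly when $v_0''>0$ and $v_0'/r>0$ (including the limit at $r=0$). We have $v_0'=u'/(2\sqrt{-u})>0$ for $r>0$. Also
\[
v_0''=\frac{2u''(-u)+u'^2}{4(-u)^{3/2}},
\]
so it suffices to show $u''>0$, that is, $\varphi'>0$, equivalently $\psi'>0$. From the ODE and the bound in Step 1,
\[
\psi'\ge \frac{2r}{n-1}\Bigl[\Bigl(1-\frac{2}{n(n-1)}\Bigr)^2-\frac{n-2}{n}\Bigr],
\]
and the bracket is positive for every $n\ge3$. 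For example, at $n=3$ it equals $4/9-1/3>0$, and for large $n$ it behaves like $1-4/n^2-(1-2/n)>0$; I would check the general inequality by a short expansion. This gives $\psi'>0$ for $r>0$.

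At the origin, $u''(0)=\varphi'(0)=\sqrt{c}>0$, and $v_0'/r\to u''(0)/(2\sqrt{-u_0(0)})>0$, so $D^2v_0(0)>0$ as well. I expect the genuine work to lie in this sign estimate for $\psi'$ together with the regularity at $r=0$; everything else is direct computation.
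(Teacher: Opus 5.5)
Your proof is correct. It follows the same overall plan as the paper: reduce to a radial ODE, show $u_0''>0$ and $u_0'/r>0$, then conclude from $D^2v_0=\frac{1}{2\sqrt{-u_0}}D^2u_0+\frac{1}{4(-u_0)^{3/2}}Du_0\otimes Du_0$. The key monotonicity step, however, is proved differently.

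\textbf{The paper's route.} It works with $\ell=u_0'^2$ directly in the form $\mathcal E(D^2u,Du)=1$, obtaining $r\ell'+(n-2)\ell(1+\ell)=\frac{2r^2}{n-1}$. It proves $\ell'>0$ by a first-zero argument: at a first zero $r_0$ of $\ell'$, the differentiated ODE gives $r_0\ell''(r_0)=\frac{4r_0}{n-1}>0$, which contradicts $\ell''(r_0)\le 0$.

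\textbf{Your route.} The variable $\psi=\varphi^2=\ell/(1+\ell)$ (one minus $\omega^{-2}$) puts the equation in the exact-derivative form $(r^{n-2}\psi)'=\frac{2}{n-1}r^{n-1}(1-\psi)^2$. A single comparison then gives the a priori bound $\psi\le\frac{2r^2}{n(n-1)}\le\frac13$, and hence existence on all of $[0,1]$. Feeding that bound back into the ODE gives an explicit lower bound for $\psi'$. The bracket you left to ``a short expansion'' equals
$\frac{2(n-3)}{n(n-1)}+\frac{4}{n^2(n-1)^2}>0$, so that step closes for every $n\ge 3$.

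\textbf{Trade-offs.} The paper's argument is qualitative, needs no numerical inequality, and gives $\ell'>0$ for every $r>0$. Yours is quantitative but uses $r\le 1$ and is tightest at $n=3$, where the bracket is only $1/9$. In exchange, the single comparison handles both global existence and monotonicity. Your integral, regular-singular treatment of the center is also more explicit than the paper's bare assertion $\ell=\frac{2}{n(n-1)}r^2+O(r^4)$.

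Your hedge about $\sigma_1>0$ in Step 2 is unnecessary. Beyond $\kappa_1>0$ from Step 3, with $n-1$ equal positive tangential curvatures the condition $\sigma_2>0$ already forces $\kappa_1>-\frac{n-2}{2}\kappa_2$, and hence $\sigma_1>\frac n2\kappa_2>0$.
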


\begin{proof}
We seek a radial solution $u_0=u_0(r)$, where $r=|x|$, and set
$\ell(r)=u_0'(r)^2$. The eigenvalues of $D^2u_0$ are $u_0''(r)$ and
$u_0'(r)/r$ with multiplicity $n-1$. Since
$Du_0=u_0'(r)e_r$, equation \eqref{eq:real-main-1} becomes
\[
(n-1)\frac{u_0'u_0''}{r}
+
\binom{n-1}{2}
\frac{u_0'^2}{r^2}(1+u_0'^2)
=
1.
\]
Equivalently,
\begin{equation}\label{eq:ball-ode}
r\ell'+(n-2)\ell(1+\ell)
=
\frac{2r^2}{n-1},
\qquad
\ell(0)=0.
\end{equation}

The smoothness condition at the center gives
\begin{equation}\label{eq:ball-center}
\ell(r)
=
\frac{2}{n(n-1)}r^2+O(r^4).
\end{equation}
Hence $\ell>0$ and $\ell'>0$ for all sufficiently small $r>0$.

We claim that in fact $\ell'(r)>0$ for every $0<r\le1$. Suppose
otherwise, and let $r_0>0$ be the first zero of $\ell'$. Differentiating
\eqref{eq:ball-ode} gives
\[
\ell'+r\ell''+(n-2)(1+2\ell)\ell'
=
\frac{4r}{n-1}.
\]
Evaluating at $r_0$ yields
\[
r_0\ell''(r_0)
=
\frac{4r_0}{n-1}>0,
\]
whereas the first-zero property of $\ell'$ requires
$\ell''(r_0)\le0$. This contradiction proves
\begin{equation}\label{eq:ell-positive}
\ell'(r)>0
\qquad\text{for }0<r\le1.
\end{equation}

Equation \eqref{eq:ball-ode} also gives
$\ell'(r)\le2r/(n-1)$ whenever $\ell\ge0$. Integrating from $0$ to $r$ and using
$\ell(0)=0$, we obtain
\[
0\le \ell(r)\le \frac{r^2}{n-1},
\]
so no finite-radius blow-up can occur. We therefore take the positive branch
$u_0'=\sqrt{\ell}$ and impose the zero boundary condition by setting
\begin{equation}\label{eq:ball-solution}
u_0(r)
=
-\int_r^1\sqrt{\ell(t)}\,dt.
\end{equation}
Then $u_0<0$ in $B_1$ and $u_0=0$ on $\partial B_1$. Moreover,
$u_0'>0$, $u_0''=\ell'/(2\sqrt{\ell})>0$, and $u_0'/r>0$.
Consequently,
\[
D^2u_0>0
\qquad\text{in }B_1.
\]
In particular, the graph of $u_0$ is $\Gamma_2$-admissible.

Finally, since
\[
D^2v_0
=
\frac{1}{2\sqrt{-u_0}}D^2u_0
+
\frac{1}{4(-u_0)^{3/2}}
Du_0\otimes Du_0,
\]
the first term is positive definite and the second is positive
semidefinite. Hence
\[
D^2v_0>0
\qquad\text{in }B_1.
\]
\end{proof}

Lemma~\ref{lem:real-ball} supplies a full-rank starting point for the
strict-convexity argument. We next deform the unit ball to the
prescribed domain and show that the corresponding admissible
Dirichlet solutions exist with estimates that are uniform along the
entire deformation path.
\subsection{Minkowski deformation, solvability, and stability}
\label{subsubsec:real-deformation}

We now deform the unit ball to the prescribed domain. The main point
of this subsection is to show that the admissible Dirichlet problem
remains solvable along the whole deformation path and that the
corresponding solutions satisfy estimates uniform in the deformation
parameter. These estimates will also give the local stability needed
in the proof of Theorem~\ref{thm:real-strict-convexity}.

After a translation, we may assume that $0\in\Omega$. For
$0\le s\le1$, define
\begin{equation}\label{eq:minkowski-path}
\Omega_s=(1-s)B_1+s\Omega.
\end{equation}
Thus $\Omega_0=B_1$ and $\Omega_1=\Omega$.

Let $\mathfrak{s}_K$ denote the support function of a convex body $K$.
Then
\[
\mathfrak{s}_{\Omega_s}
=(1-s)+s\mathfrak{s}_{\Omega}.
\]
The curvature-radius matrix of $\partial\Omega_s$ is
\[
\nabla_{\mathbb S^{n-1}}^2\mathfrak{s}_{\Omega_s}
+\mathfrak{s}_{\Omega_s}g_{\mathbb S^{n-1}},
\]
which is a convex combination of the corresponding positive definite
matrices for $B_1$ and $\Omega$. Hence $\{\Omega_s\}_{0\le s\le1}$
is a smooth uniformly strictly convex family. In particular, the
diameters, boundary curvatures, and the boundary geometry of any fixed
finite order are uniformly controlled.

The Dirichlet problem for prescribed curvature equations on graphs
has been extensively studied; see, among others,
Caffarelli--Nirenberg--Spruck \cite{CNS88},
Ivochkina \cite{Ivo90,Ivo91}, and Trudinger \cite{Tru90}.
These works provide classical existence and a priori estimate theories
for broad classes of prescribed-curvature Dirichlet problems.
They do not, however, apply directly to the present equation, since here
the prescribed curvature depends explicitly on the unit normal:
\[
\sigma_2(\kappa[u])
=(1+|Du|^2)^{-2}
=\langle \nu,e_{n+1}\rangle^4.
\]
In particular, the right-hand side is not of the purely position-dependent
form treated in the classical Weingarten Dirichlet theorem of
Caffarelli--Nirenberg--Spruck, while the hypotheses in the corresponding
existence results of Ivochkina and Trudinger do not directly yield the
smooth admissible solvability required here.

We therefore give a continuity argument for the present equation.
The main external ingredient is the global curvature estimate of
Guan--Ren--Wang \cite{GRW15}, which applies to admissible
$2$-convex solutions of equations of the form
\[
\sigma_2(\kappa)=f(X,\nu)
\]
and, in the graphical Dirichlet setting, controls the interior second
derivatives in terms of the boundary $C^2$ norm.
Together with the boundary estimates established below, this provides
the uniform $C^2$ bound needed to close the continuity method.

\begin{proposition}
\label{prop:real-solvability}
For every $s\in[0,1]$, the Dirichlet problem
\begin{equation}\label{eq:deformation-problem}
\begin{cases}
\mathcal E(D^2u_s,Du_s)=1,
& x\in\Omega_s,\\
u_s=0,
& x\in\partial\Omega_s.
\end{cases}
\end{equation}
admits a unique smooth $\Gamma_2$-admissible solution $u_s$.
Moreover, there exist $\alpha_0\in(0,1)$ and a constant $C>0$,
independent of $s\in[0,1]$, such that
\begin{equation}\label{eq:uniform-c2a}
\|u_s\|_{C^{2,\alpha_0}(\overline{\Omega_s})}\le C.
\end{equation}
\end{proposition}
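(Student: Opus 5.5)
I would prove Proposition~\ref{prop:real-solvability} by the method of continuity in $s$, combined with uniqueness via the comparison principle. Let $S\subset[0,1]$ be the set of parameters for which \eqref{eq:deformation-problem} has a smooth $\Gamma_2$-admissible solution. Lemma~\ref{lem:real-ball} gives $0\in S$. To move along the path I would pull each problem back to the fixed ball by a smooth family of diffeomorphisms $\Phi_s:\overline{B_1}\to\overline{\Omega_s}$, for instance the radial maps built from the gauge functions of $\Omega_s$. Openness of $S$ then follows from the implicit function theorem in $C^{2,\alpha}_0$. The linearized operator \eqref{eq:real-linearized} is uniformly elliptic with no zero-order term, so by the maximum principle and Schauder theory it is an isomorphism $C^{2,\alpha}_0\to C^{\alpha}$. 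Admissibility is an open condition, so it persists for nearby $s$.

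Uniqueness uses the same structure. If $u,\tilde u$ are two admissible solutions, then $u-\tilde u$ satisfies a linear equation of the form \eqref{eq:real-linearized}: the coefficients come from integrating the linearization along the segment, and they stay elliptic because $\Gamma_2$ is convex and $\mathcal E$ restricted to the admissible branch is monotone. The difference vanishes on $\partial\Omega_s$, so it is zero.

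Closedness requires estimates that are uniform in $s$, and I would obtain them in the usual order.

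\textbf{$C^0$.} Admissibility gives $\Delta u>0$, so $u<0$. For a lower bound, I would compare with a large downward paraboloid or a translated ball solution. Since $\mathcal E(D^2w,Dw)\ge\sigma_2(D^2w)$ for convex $w$ (because $T_2(M)\ge0$ when $M\ge0$), the function $w=\frac{A}{2}(|x|^2-R^2)$ with $A\ge\sqrt{2/(n(n-1))}$ and $\Omega_s\subset B_R$ is a subsolution with $w\le0$ on $\partial\Omega_s$. The comparison principle then gives $u\ge w$.

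\textbf{$C^1$ on the boundary.} The function $w$ is a lower barrier. The function $0$ is an upper barrier, and a better one comes from the uniform convexity: for each boundary point, take the distance-type barrier $h=-\mu\,d$, using that the domain is uniformly convex with controlled geometry. These bound $|Du|$ on $\partial\Omega_s$.

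\textbf{Global gradient.} Differentiating the equation, which has no $x$-dependence, shows that each $u_k$ satisfies $\mathcal L u_k=0$ (exactly, since $\mathcal E$ is independent of $x$ and $u$). The maximum principle then bounds $|Du|$ by its boundary values.

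\textbf{Second derivatives.} This is the main obstacle. The interior part is handled by citing Guan--Ren--Wang \cite{GRW15} for $\sigma_2(\kappa)=\langle\nu,e_{n+1}\rangle^4$, with $f$ positive and smooth given the gradient bound. This reduces to boundary $C^2$ estimates.

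For the boundary estimates I would follow Caffarelli--Nirenberg--Spruck \cite{CNS88} and Ivochkina \cite{Ivo90}:

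1. \emph{Tangential–tangential derivatives.} These come from $u=0$ on $\partial\Omega_s$: $u_{\alpha\beta}=-u_\nu\kappa_{\alpha\beta}$.

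2. \emph{Mixed tangential–normal derivatives.} Apply the linearized operator to the tangential rotation fields $T=x_\alpha\partial_\beta-x_\beta\partial_\alpha$ (after flattening) and use a barrier of the form $A(-u)+B|x-x_0|^2-$ (a function of $d$). The operator $\mathcal L$ has bounded first-order coefficients, and $\mathcal L(-u)$ is controlled below via concavity of $\sigma_2^{1/2}$ on $\Gamma_2$.

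3. \emph{Double normal derivative.} Write the equation at the boundary as $\sigma_1$ of the tangential block times $u_{\nu\nu}$ plus bounded terms. This is affine in $u_{\nu\nu}$ with coefficient $\sigma_1(u_\nu\kappa')(1+|Du|^2)$-type. Hopf's lemma and uniform convexity give $u_\nu\ge c>0$, so this coefficient is bounded below and $u_{\nu\nu}$ is determined and bounded.

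Step 3 also shows that the tangential block lies uniformly in $\Gamma_1$. Together with the positive constant right-hand side, the equation is then uniformly elliptic and has concave structure (via $\sigma_2^{1/2}$ of the symmetrized Weingarten matrix). Evans--Krylov and boundary Krylov then give the $C^{2,\alpha_0}$ bound \eqref{eq:uniform-c2a}, with constants depending only on the uniformly controlled geometry of $\Omega_s$. Arzelà--Ascoli plus Schauder bootstrapping make $S$ closed.

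I expect the mixed tangential–normal estimate to be the hardest step, because the gradient dependence of $\mathcal E$ through $T_2(M)[\vartheta,\vartheta]$ produces first-order terms in $\mathcal L(Tu)$ that must be absorbed. I would first try the barrier $\Psi=A v_{\mathrm{lin}}+B|x-x_0|^2$, with $A\gg B\gg1$, exploiting $\sum a^{ii}\ge c$ on $\Gamma_2$.
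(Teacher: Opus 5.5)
Your architecture matches the paper's: continuity in $s$ with openness from the implicit function theorem, barriers for $C^0$ and the boundary gradient, $\mathcal L u_k=0$ for the global gradient, Caffarelli--Nirenberg--Spruck boundary estimates, Guan--Ren--Wang for the interior curvature bound, and Evans--Krylov. However, your uniqueness step fails as written. Graphical admissibility of $(M,\vartheta)=(D^2u,Du)$ means $(I+\vartheta\otimes\vartheta)^{-1/2}M(I+\vartheta\otimes\vartheta)^{-1/2}\in\Gamma_2$. Because the congruence depends on $\vartheta$, this set is a convex cone in $M$ for each fixed $\vartheta$, but it is \emph{not} jointly convex in $(M,\vartheta)$. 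So convexity of $\Gamma_2$ does not keep the segment admissible. For instance, take $n=3$ and $L^2=10$. The pairs $(\diag(-4,1,1),Le_1)$ and $(\diag(1,-4,1),Le_2)$ are admissible, with normalized matrices $\diag(-\frac{4}{11},1,1)$ and its permutation. Their midpoint $(\diag(-\frac32,-\frac32,1),\frac L2(e_1+e_2))$ has normalized eigenvalues $-\frac14,-\frac32,1$, so $\sigma_1<0$ and $T_1$ is indefinite there. Thus the averaged coefficients $\int_0^1D_M\mathcal E(D^2u_t,Du_t)\,dt$ along $u_t=tu+(1-t)\tilde u$ are not known to be elliptic wherever $Du\neq D\tilde u$. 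You must localize where the gradients agree. At an interior positive maximum of $u-\tilde u$ one has $Du=D\tilde u$, so the segment is admissible near that point. Vertical-translation invariance makes $\tilde u+\max(u-\tilde u)$ a solution touching $u$ from above. The strong maximum (tangency) principle plus connectedness then contradicts the boundary data. This is exactly the paper's argument.

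The barrier step also needs repair. Your paraboloid $\frac A2(|x|^2-R^2)$ is strictly negative on most of $\partial\Omega_s$, so it bounds $u$ but not $\partial_\nu u$. You need a subsolution vanishing at each boundary point. One choice is $\frac A2(|x-c|^2-R_0^2)$ over a ball $B_{R_0}(c)\supset\Omega_s$ tangent there (your ``translated ball'', with $R_0$ uniform by uniform convexity); another is the paper's $C_+\beta_s$. The uniform lower bound $\partial_\nu u\ge c_0>0$ feeds the lower bound on $u_{\alpha\beta}=u_\nu\,\mathrm{II}_{\alpha\beta}$ and the solution for $u_{\nu\nu}$. It needs a smooth global supersolution vanishing on $\partial\Omega_s$. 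Hopf's lemma gives no uniformity in $s$, and $-\mu d$ has a convex kink on the ridge, where it is not a supersolution. The paper uses $c_-\beta_s$ with $D^2\beta_s\ge c_\beta I$, for which $\mathcal E(D^2(c_-\beta_s),D(c_-\beta_s))=c_-^2\sigma_2(D^2\beta_s)+c_-^4T_2(D^2\beta_s)[D\beta_s,D\beta_s]<1$.

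Two smaller corrections. First, graphical admissibility does not give $\Delta u>0$. Take $D^2u=\diag(-(1+L^2),3,3)$, $Du=Le_1$, $L^2>5$; then $\kappa$ is proportional to $(-1,3,3)\in\Gamma_2$. Admissibility gives only $H>0$, which still yields $u<0$. Second, the fields $x_\alpha\partial_\beta-x_\beta\partial_\alpha$ vanish at the boundary point and say nothing about $u_{\alpha n}$ there, while flattening destroys the Euclidean invariance. Use $Z_\alpha=\partial_\alpha-k_\alpha(x_\alpha\partial_n-x_n\partial_\alpha)$, an exact infinitesimal symmetry of $\mathcal E(D^2u,Du)$. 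Then $\mathcal L(Z_\alpha u)=0$ with no first-order error to absorb, and $|Z_\alpha u|\le C|x|^2$ on $\partial\Omega_s$.
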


\begin{proof}

All estimates below are a priori estimates. Fix $s\in[0,1]$
and let $u_s$ be the smooth $\Gamma_2$-admissible solution of
\eqref{eq:deformation-problem}. We prove estimates whose constants are independent of $s$.

\medskip
\noindent\textbf{Step 1. Uniform $C^0$ estimate and boundary normal derivative.}

Since $\{\Omega_s\}$ is a smooth uniformly strictly convex family,
we may choose smooth defining functions $\beta_s$ such that
$\beta_s<0$ in $\Omega_s$, $\beta_s=0$ on $\partial\Omega_s$, and
\[
D^2\beta_s\ge c_\beta I
\]
for some $c_\beta>0$ independent of $s$. The relevant finite-order
norms of $\beta_s$, as well as positive upper and lower bounds for
$\partial_{\mathbf n_s}\beta_s$, are also uniform in $s$, where
$\mathbf n_s$ denotes the exterior unit normal to $\partial\Omega_s$.

Choose constants $C_+\gg1$ and $0<c_-\ll1$, independent of $s$.
By the homogeneity of $\sigma_2$ and $T_2$,
\[
\mathcal E(D^2(C_+\beta_s),D(C_+\beta_s))
=
C_+^2\sigma_2(D^2\beta_s)
+
C_+^4T_2(D^2\beta_s)[D\beta_s,D\beta_s],
\]
and
\[
\mathcal E(D^2(c_-\beta_s),D(c_-\beta_s))
=
c_-^2\sigma_2(D^2\beta_s)
+
c_-^4T_2(D^2\beta_s)[D\beta_s,D\beta_s].
\]
Since $D^2\beta_s>0$, $T_2(D^2\beta_s)\ge0$. Thus $C_+$ and $c_-$
can be chosen uniformly so that
\[
\mathcal E(D^2(C_+\beta_s),D(C_+\beta_s))>1,
\qquad
\mathcal E(D^2(c_-\beta_s),D(c_-\beta_s))<1.
\]
Moreover, since $D^2\beta_s>0$, both $C_+\beta_s$ and
$c_-\beta_s$ lie on the $\Gamma_2$-admissible branch. They may
therefore be used as admissible comparison functions.
The comparison principle gives
\begin{equation}\label{eq:uniform-C0-barriers}
C_+\beta_s\le u_s\le c_-\beta_s<0.
\end{equation}
In particular, $\|u_s\|_{L^\infty(\Omega_s)}\le C$.

Since all three functions vanish on $\partial\Omega_s$, the Hopf boundary lemma applied to the two comparison functions gives
\[
c_-\partial_{\mathbf n_s}\beta_s
\le \partial_{\mathbf n_s}u_s
\le C_+\partial_{\mathbf n_s}\beta_s.
\]
Consequently,
\begin{equation}\label{eq:uniform-hopf}
0<c_0\le\partial_{\mathbf n_s}u_s\le C_0
\qquad\text{on }\partial\Omega_s,
\end{equation}
where $c_0$ and $C_0$ are independent of $s$.

\medskip
\noindent\textbf{Step 2. Uniform $C^1$ estimate.}

Let $\mathcal L_s$ be the linearization of
\eqref{eq:E-definition} at $u_s$. Since the equation is independent of
$x$ and $u$, differentiation with respect to $x_k$ gives
$\mathcal L_s((u_s)_k)=0$. Therefore
\[
\mathcal L_s\left(\frac12|Du_s|^2\right)
=
a_s^{ij}(u_s)_{ki}(u_s)_{kj}\ge0.
\]
The maximum principle yields
\[
\sup_{\Omega_s}|Du_s|
=
\sup_{\partial\Omega_s}|Du_s|.
\]
Since $u_s=0$ on $\partial\Omega_s$, the boundary gradient is purely
normal. Hence \eqref{eq:uniform-hopf} gives
\begin{equation}\label{eq:uniform-c1}
\|Du_s\|_{L^\infty(\Omega_s)}\le C.
\end{equation}
In particular, if $\nu_s$ denotes the upward unit normal to the graph,
then
\begin{equation}\label{eq:uniform-angle}
\langle\nu_s,e_{n+1}\rangle
=
\frac{1}{\sqrt{1+|Du_s|^2}}
\ge c_1>0.
\end{equation}

\medskip
\noindent\textbf{Step 3. Boundary $C^2$ estimate.}

Fix a point of $\partial\Omega_s$ and choose an orthonormal frame such
that $e_n=\mathbf n_s$ and $e_1,\ldots,e_{n-1}$ are principal
directions of $\partial\Omega_s$. Differentiating the zero boundary
condition twice in tangential directions gives
\[
(u_s)_{\alpha\beta}
=
\partial_{\mathbf n_s}u_s\,
(\mathrm{II}_{\partial\Omega_s})_{\alpha\beta}.
\]
Uniform strict convexity and \eqref{eq:uniform-hopf} imply
\begin{equation}\label{eq:boundary-tangential}
c_2I\le
\bigl((u_s)_{\alpha\beta}\bigr)_{\alpha,\beta<n}
\le C_2I.
\end{equation}

We next estimate the mixed derivatives. Near the chosen boundary
point, with the exterior-normal convention used above, write
\[
x_n
=
-\frac12\sum_{\alpha<n}
k_\alpha^{(s)}x_\alpha^2+O(|x'|^3),
\]
where $k_\alpha^{(s)}>0$ are the principal curvatures of
$\partial\Omega_s$. Define
\[
Z_\alpha
=
\partial_\alpha
-
k_\alpha^{(s)}
\bigl(x_\alpha\partial_n-x_n\partial_\alpha\bigr).
\]
The vector field $Z_\alpha$ is a constant linear combination of
an infinitesimal translation and an infinitesimal Euclidean
rotation. Since \eqref{eq:deformation-problem} is invariant under both translations
and rotations, differentiating these invariances at the identity
gives
\[
L_s(Z_\alpha u_s)=0.
\]
The constants entering this identity are uniformly controlled
because the principal curvatures of $\partial\Omega_s$ and the
relevant boundary coordinate charts are uniform in $s$,
while the boundary expansion gives
$|Z_\alpha u_s|\le C|x|^2$ on the local boundary.

Using the uniformly strict comparison functions from Step~1 in the
standard Caffarelli--Nirenberg--Spruck boundary barrier argument
\cite{CNS88}, we obtain
\begin{equation}\label{eq:boundary-mixed}
|(u_s)_{\alpha n}|\le C_3
\qquad\text{on }\partial\Omega_s.
\end{equation}

It remains to estimate $(u_s)_{nn}$. Set
\[
J_s=
\bigl((u_s)_{\alpha\beta}\bigr)_{\alpha,\beta<n}.
\]
Since $Du_s=(\partial_{\mathbf n_s}u_s)e_n$ on $\partial\Omega_s$,
the block identities give
\[
\sigma_2(D^2u_s)
=
\sigma_2(J_s)
+
(u_s)_{nn}\operatorname{tr}J_s
-
\sum_{\alpha<n}(u_s)_{\alpha n}^2
\]
and
\[
T_2(D^2u_s)_{nn}=\sigma_2(J_s).
\]
Substitution into \eqref{eq:deformation-problem} yields
\[
(u_s)_{nn}
=
\frac{
1+\displaystyle\sum_{\alpha<n}(u_s)_{\alpha n}^2
-
\bigl(1+(\partial_{\mathbf n_s}u_s)^2\bigr)\sigma_2(J_s)
}{
\operatorname{tr}J_s
}.
\]
Using \eqref{eq:uniform-hopf},
\eqref{eq:boundary-tangential}, and
\eqref{eq:boundary-mixed}, we obtain
\begin{equation}\label{eq:boundary-c2}
\|D^2u_s\|_{L^\infty(\partial\Omega_s)}\le C.
\end{equation}

\medskip
\noindent\textbf{Step 4. Global $C^2$ estimate.}

Returning to the geometric form \eqref{eq:real-main},
\[
\sigma_2(\kappa[u_s])
=
\langle\nu_s,e_{n+1}\rangle^4.
\]
By \eqref{eq:uniform-angle}, the right-hand side is bounded away from
zero uniformly in $s$.

To apply the prescribed-curvature estimate, we verify a uniform
star-shapedness condition. Write $Y=(x,u_s(x))$ and choose a fixed
point $Y_0=(0,-L_0)$ with $L_0$ sufficiently large. The uniform
$C^0$ and $C^1$ estimates imply
\[
\langle Y-Y_0,\nu_s\rangle
=
\frac{L_0+u_s-x\cdot Du_s}
{\sqrt{1+|Du_s|^2}}
\ge c_3>0
\]
uniformly in $s$. Thus all graphs are uniformly star-shaped with
respect to $Y_0$.  After translating $Y_0$ to the origin and writing the hypersurfaces
in radial form $Y-Y_0=\rho_s(\theta)\theta$, the uniform $C^0$ and
$C^1$ estimates above, together with the uniform lower bound for the
support function, imply
$\inf \rho_s\ge c>0$ and $\|\rho_s\|_{C^1}\le C$ uniformly in $s$. This verifies the star-shapedness
assumption required in the global curvature estimate of
Guan--Ren--Wang \cite{GRW15}.

Here the prescribed function is
\[
f(Y,\nu)=\langle \nu,e_{n+1}\rangle^4.
\]
It is smooth and independent of $Y$, and
\eqref{eq:uniform-angle} gives a uniform positive lower bound for
$f$ along the family. By \eqref{eq:uniform-angle}, we also know the unit normal bundles of the whole family remain in a
fixed spherical cap. Hence they admit a fixed open neighborhood
$\Gamma$ on which
$f(Y,\nu)=\langle\nu,e_{n+1}\rangle^4$ is positive and satisfies
$\inf_{\Gamma}f\ge c>0$ and $\|f\|_{C^2(\Gamma)}\le C$ uniformly in
$s$. Thus the assumptions on the prescribed function in
\cite{GRW15} are satisfied. Moreover,
$\kappa[u_s]\in\Gamma_2$ is precisely the $2$-convexity
assumption. Thus the structural quantities entering the
$k=2$ prescribed-curvature estimate are uniformly controlled
along the deformation.

The global curvature estimate for admissible $2$-convex
prescribed-curvature equations, together with its boundary version
and \eqref{eq:boundary-c2}, gives
\[
\max_{\Sigma_{u_s}}\max_i\kappa_i[u_s]\le C;
\]
see \cite{GRW15}. Since $\kappa[u_s]\in\Gamma_2$, we also have
$\sigma_1(\kappa[u_s])>0$. If
$\kappa_1\ge\cdots\ge\kappa_n$, then
\[
\kappa_n
>
-\sum_{i=1}^{n-1}\kappa_i
\ge -(n-1)C.
\]
Hence all principal curvatures are uniformly bounded from both sides.

Using the graphical expression for the second fundamental form
together with \eqref{eq:uniform-c1}, we conclude that
\begin{equation}\label{eq:global-c2}
\|D^2u_s\|_{L^\infty(\Omega_s)}\le C.
\end{equation}

\medskip
\noindent\textbf{Step 5. $C^{2,\alpha}$ estimate and the continuity method.}

By \eqref{eq:global-c2} and \eqref{eq:uniform-c1}, the principal
curvatures are uniformly bounded. Moreover, by
\eqref{eq:uniform-angle} and the equation,
\[
\sigma_2(\kappa[u_s])
=
\langle\nu_s,e_{n+1}\rangle^4
\ge c_1^4>0.
\]
Since $\kappa[u_s]\in\Gamma_2$, we have
$\sigma_1(\kappa[u_s])>0$, and the identity
\[
\sigma_1(\kappa)^2
=
|\kappa|^2+2\sigma_2(\kappa)
\]
gives
\[
\sigma_1(\kappa[u_s])\ge \sqrt{2}\,c_1^2.
\]
Thus the curvature vectors remain in a compact subset of
$\Gamma_2$, and hence the equation is uniformly elliptic along
the family.

For fixed $\vartheta$, the identity obtained in
Subsection~\ref{subsubsec:real-ball} shows that
$M\mapsto\mathcal E(M,\vartheta)^{1/2}$ is, up to a positive
factor, the composition of $\sigma_2^{1/2}$ with the linear
congruence map
\[
M\longmapsto
(I+\vartheta\otimes\vartheta)^{-1/2}
M
(I+\vartheta\otimes\vartheta)^{-1/2}.
\]
It is therefore concave in $M$ on the admissible branch.
Since the dependence on $\vartheta=Du_s$ is smooth and
\eqref{eq:uniform-c1} gives a uniform bound for $Du_s$,
Evans--Krylov theory and the boundary Schauder estimates yield
\eqref{eq:uniform-c2a}.

Define
\[
\mathcal S_{\mathrm{solv}}
=
\left\{
s\in[0,1]:
\eqref{eq:deformation-problem}
\text{ admits a smooth admissible solution}
\right\}.
\]
Lemma~\ref{lem:real-ball} gives $0\in\mathcal S_{\mathrm{solv}}$.

Let $s_0\in\mathcal S_{\mathrm{solv}}$. For $s$ close to $s_0$,
choose smooth diffeomorphisms
$\Upsilon_s:\Omega_{s_0}\to\Omega_s$ satisfying
$\Upsilon_{s_0}=\operatorname{id}$. After pulling the equation back
to the fixed domain $\Omega_{s_0}$, the linearized Dirichlet operator
at $s=s_0$ has the form \eqref{eq:real-linearized}. It is uniformly
elliptic and has no zero-order term, so the maximum principle gives
a trivial Dirichlet kernel. Schauder theory and the Fredholm
alternative imply invertibility, and the implicit function theorem
shows that $\mathcal S_{\mathrm{solv}}$ is open.

Now let $s_j\in\mathcal S_{\mathrm{solv}}$ and $s_j\to s_\infty$.
After pulling the solutions back to a fixed domain,
\eqref{eq:uniform-c2a} yields a subsequence converging in
$C^{2,\beta}$ for every $0<\beta<\alpha_0$. The limit solves the
corresponding Dirichlet problem on $\Omega_{s_\infty}$.

The limiting curvature vector belongs to
$\overline{\Gamma}_2$. Moreover, by \eqref{eq:uniform-angle},
\[
\sigma_2(\kappa)
=
\langle\nu,e_{n+1}\rangle^4
\ge c_1^4>0.
\]
Since $\sigma_1(\kappa)\ge0$ on $\overline{\Gamma}_2$ and
\[
\sigma_1(\kappa)^2=|\kappa|^2+2\sigma_2(\kappa),
\]
we have $\sigma_1(\kappa)>0$. Thus the limiting curvature vector
still lies in $\Gamma_2$. In particular, the limiting equation is
uniformly elliptic. Standard Schauder bootstrapping then upgrades
the $C^{2,\beta}$ limit to a smooth admissible solution. Hence
$s_\infty\in\mathcal S_{\mathrm{solv}}$, so
$\mathcal S_{\mathrm{solv}}$ is closed. Therefore
\[
\mathcal S_{\mathrm{solv}}=[0,1].
\]

Finally, uniqueness follows from the strong tangency principle for
the admissible Weingarten equation. Indeed, let $u$ and $\widetilde u$
be two admissible solutions with the same zero boundary data. If
\[
c=\max_{\overline{\Omega_s}}(u-\widetilde u)>0,
\]
then the maximum is attained at an interior point, and
$\widetilde u+c$ lies above $u$ and touches it there. Since the
equation is invariant under vertical translations,
$\widetilde u+c$ satisfies the same equation. The strong tangency
principle therefore gives
\[
u\equiv \widetilde u+c,
\]
which contradicts the common zero boundary data. Interchanging
$u$ and $\widetilde u$ gives the reverse inequality, and hence
$u\equiv\widetilde u$.
\end{proof}

The continuity argument also gives the stability of the solutions
with respect to the deformation parameter, which will be used in the
open--closed argument for strict convexity.

\begin{lemma}
\label{lem:real-local-stability}
Fix $s_0\in[0,1]$. For $s$ sufficiently close to $s_0$, let
$\Upsilon_s:\Omega_{s_0}\to\Omega_s$ be the diffeomorphisms used
above. Then
\[
u_s\circ\Upsilon_s
\longrightarrow u_{s_0}
\quad\text{in }
C^{2,\alpha_0}(\overline{\Omega_{s_0}})
\]
as $s\to s_0$.

If $\Omega'\Subset\Omega_{s_0}$ and
$v_s=-\sqrt{-u_s}$, then
\[
D^2v_s(\Upsilon_s(x))
\longrightarrow D^2v_{s_0}(x)
\]
uniformly for $x\in\Omega'$.
\end{lemma}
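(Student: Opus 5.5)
The plan is a compactness-plus-uniqueness argument, followed by a local computation for $D^2v_s$ on compact subsets.

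\emph{Step 1: convergence of the pullbacks.} Set $w_s=u_s\circ\Upsilon_s$ on the fixed domain $\overline{\Omega_{s_0}}$. The diffeomorphisms $\Upsilon_s$ depend smoothly on $s$ and satisfy $\Upsilon_{s_0}=\mathrm{id}$, so their $C^{3}$ norms are uniformly bounded for $s$ near $s_0$ and $\Upsilon_s\to\mathrm{id}$ in $C^3$. The uniform estimate \eqref{eq:uniform-c2a} from Proposition~\ref{prop:real-solvability} then gives $\|w_s\|_{C^{2,\alpha_0}(\overline{\Omega_{s_0}})}\le C$. Each $w_s$ solves the pulled-back equation, whose coefficients converge smoothly to those of $\mathcal E(D^2u,Du)=1$ as $s\to s_0$, with zero boundary data. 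Take any sequence $s_j\to s_0$. By Arzel\`a--Ascoli a subsequence converges in $C^{2,\beta}$, $\beta<\alpha_0$, to some $w$. Exactly as in the closedness step of Proposition~\ref{prop:real-solvability}, $w$ solves \eqref{eq:deformation-problem} on $\Omega_{s_0}$. Its curvature vector lies in $\Gamma_2$, because $\sigma_2\ge c_1^4$ and $\sigma_1\ge0$ force $\sigma_1>0$. So $w$ is a smooth admissible solution, and the uniqueness part of Proposition~\ref{prop:real-solvability} gives $w=u_{s_0}$. Since every sequence has a subsequence with this same limit, the whole family converges in $C^{2,\beta}$.

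\emph{Step 2: upgrading to $C^{2,\alpha_0}$.} This is the main obstacle: compactness alone only gives convergence in $C^{2,\beta}$ for $\beta<\alpha_0$, not in the stated norm. I would try the implicit function theorem from the openness step. The linearized Dirichlet operator at $u_{s_0}$, of the form \eqref{eq:real-linearized}, is invertible between $C^{2,\alpha_0}_0$ and $C^{0,\alpha_0}$, so the solution map $s\mapsto w_s$ is a $C^1$ map into $C^{2,\alpha_0}$ near $s_0$. Local uniqueness within the IFT neighborhood, combined with global uniqueness, identifies this branch with $w_s$. Continuity of the map then gives the $C^{2,\alpha_0}$ convergence. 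If the IFT regularity bookkeeping in the pulled-back equation becomes delicate, the fallback is to assume a slightly higher-order uniform estimate (e.g.\ $C^{3,\alpha_0}$ via Schauder bootstrapping, with constants uniform in $s$) and interpolate.

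\emph{Step 3: convergence of $D^2v_s$ on $\Omega'$.} On $\Omega'\Subset\Omega_{s_0}$ we have $u_{s_0}\le -c<0$ by \eqref{eq:uniform-C0-barriers}. Since $w_s\to u_{s_0}$ uniformly, also $w_s\le -c/2$ on $\Omega'$ for $s$ close to $s_0$. The formula
\[
D^2v=\frac{1}{2\sqrt{-u}}D^2u+\frac{1}{4(-u)^{3/2}}Du\otimes Du
\]
is a smooth function of $(u,Du,D^2u)$ on $\{u\le -c/2\}$. The chain rule expresses $(u_s,Du_s,D^2u_s)\circ\Upsilon_s$ through $(w_s,Dw_s,D^2w_s)$ and $D\Upsilon_s^{-1},D^2\Upsilon_s^{-1}$, and these converge uniformly to the corresponding quantities for $u_{s_0}$ and the identity. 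Hence $D^2v_s(\Upsilon_s(x))\to D^2v_{s_0}(x)$ uniformly on $\Omega'$.
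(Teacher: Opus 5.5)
Your proposal is correct and follows essentially the paper's route: the $C^{2,\alpha_0}$ convergence of $u_s\circ\Upsilon_s$ comes from the implicit function theorem for the pulled-back Dirichlet problem together with global uniqueness (your Step 2), and the convergence of $D^2v_s$ on $\Omega'\Subset\Omega_{s_0}$ then follows from the chain rule, since $u_{s_0}$ is bounded away from zero there (your Step 3). Your compactness argument in Step 1 and the higher-regularity fallback are unnecessary: the IFT branch is $C^{2,\alpha_0}$-close to $u_{s_0}$, hence admissible, so the global uniqueness in Proposition~\ref{prop:real-solvability} alone identifies it with $u_s\circ\Upsilon_s$.
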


\begin{proof}
After the equations on $\Omega_s$ are pulled back to
$\Omega_{s_0}$, the implicit function theorem used above, together
with uniqueness, gives
\[
u_s\circ\Upsilon_s\to u_{s_0}
\quad\text{in }C^{2,\alpha_0}(\overline{\Omega_{s_0}}).
\]

Set $\widetilde u_s=u_s\circ\Upsilon_s$. The chain rule gives
\[
D^2\widetilde u_s
=
(D\Upsilon_s)^T
(D^2u_s\circ\Upsilon_s)
D\Upsilon_s
+
\sum_{k=1}^n
\bigl((u_s)_k\circ\Upsilon_s\bigr)
D^2(\Upsilon_s)^k.
\]
Since $\Upsilon_s\to\operatorname{id}$ in $C^2$, it follows that
\[
D^2u_s(\Upsilon_s(x))
\longrightarrow D^2u_{s_0}(x)
\]
uniformly on $\overline{\Omega_{s_0}}$.

Now let $\Omega'\Subset\Omega_{s_0}$. Since
$u_{s_0}<0$ in the interior, $u_{s_0}\le-c_{\Omega'}<0$ on
$\Omega'$. The map $t\mapsto-\sqrt{-t}$ is smooth on the relevant
range, and therefore
$v_s\circ\Upsilon_s\to v_{s_0}$ in $C^2(\Omega')$. Applying the same
chain-rule argument gives the claimed convergence of the physical
Hessians.
\end{proof}

We have thus connected the unit-ball solution to the solution on
$\Omega$ through a family of admissible solutions with uniform
a priori estimates and local $C^2$ stability. In the next subsection
we combine this deformation with the boundary convexity lemma and
the real constant-rank theorem to prove
Theorem~\ref{thm:real-strict-convexity}.
\subsection{Proof of Theorem~\ref{thm:real-strict-convexity}}
\label{subsubsec:proof-real-strict}

We now combine the boundary convexity, the unit-ball model, and the
deformation results established above. The only remaining point is
closedness: a limit of strictly convex solutions is a priori only
weakly convex. This is precisely where the constant-rank theorem of
Section~3.1 enters.

Let $u_s$ be the unique admissible solution on $\Omega_s$ given by
Proposition~\ref{prop:real-solvability}, and set
$v_s=-\sqrt{-u_s}$.

We first observe that the boundary strip in
Lemma~\ref{lem:real-boundary-strict} can be chosen uniformly in $s$.
Indeed, the proof of the boundary convexity lemma depends only on a
positive lower bound for the second fundamental form of
$\partial\Omega_s$, a positive lower bound for
$\partial_{\mathbf n_s}u_s$, a uniform $C^2$ bound for $u_s$, and
uniform control of the boundary geometry. These quantities are
uniform along the Minkowski path by
Proposition~\ref{prop:real-solvability}. Hence there exists
$\delta_0>0$, independent of $s$, such that
\begin{equation}\label{eq:uniform-boundary-strip}
D^2v_s>0
\qquad\text{whenever}\qquad
0<\operatorname{dist}(x,\partial\Omega_s)<\delta_0.
\end{equation}

Define
\[
\mathcal I_{\mathrm{cvx}}
=
\{s\in[0,1]:D^2v_s>0\text{ in }\Omega_s\}.
\]

By Lemma~\ref{lem:real-ball}, $D^2v_0>0$ in $B_1$. Therefore
$0\in\mathcal I_{\mathrm{cvx}}$, so
$\mathcal I_{\mathrm{cvx}}$ is nonempty.

We next prove that $\mathcal I_{\mathrm{cvx}}$ is open. Let
$s_0\in\mathcal I_{\mathrm{cvx}}$ and consider the compact set
\[
K_{s_0}
=
\left\{
x\in\Omega_{s_0}:
\operatorname{dist}(x,\partial\Omega_{s_0})
\ge\frac{\delta_0}{2}
\right\}.
\]
Since $D^2v_{s_0}>0$ in $\Omega_{s_0}$ and $K_{s_0}$ is compact,
there exists $c_{\mathrm{int}}>0$ such that
\[
D^2v_{s_0}\ge 2c_{\mathrm{int}}I
\qquad\text{on }K_{s_0}.
\]
By Lemma~\ref{lem:real-local-stability}, for $s$ sufficiently close
to $s_0$,
\[
D^2v_s(\Upsilon_s(x))
\ge c_{\mathrm{int}}I
\qquad\text{for }x\in K_{s_0}.
\]
Since $\Upsilon_s\to\operatorname{id}$ smoothly and maps
$\partial\Omega_{s_0}$ onto $\partial\Omega_s$, for $s$ sufficiently
close to $s_0$ the part of $\Omega_s$ not covered by
$\Upsilon_s(K_{s_0})$ lies in the boundary strip
$\operatorname{dist}(x,\partial\Omega_s)<\delta_0$. On this region
strict convexity follows from \eqref{eq:uniform-boundary-strip}.
Thus $D^2v_s>0$ throughout $\Omega_s$, and
$\mathcal I_{\mathrm{cvx}}$ is open.

It remains to prove closedness. Let
$s_j\in\mathcal I_{\mathrm{cvx}}$ and suppose that $s_j\to s_0$.
For every compact set $\Omega'\Subset\Omega_{s_0}$,
Lemma~\ref{lem:real-local-stability} gives
\[
D^2v_{s_j}(\Upsilon_{s_j}(x))
\longrightarrow
D^2v_{s_0}(x)
\]
uniformly on $\Omega'$. Since
$D^2v_{s_j}>0$, we obtain
\begin{equation}\label{eq:weak-convex-limit}
D^2v_{s_0}\ge0
\qquad\text{in }\Omega_{s_0}.
\end{equation}

We may now apply
Theorem~\ref{thm:realn} to $v_{s_0}$.
It follows from \eqref{eq:weak-convex-limit} that
$\operatorname{rank}D^2v_{s_0}$ is constant in the connected domain
$\Omega_{s_0}$. On the other hand,
\eqref{eq:uniform-boundary-strip} shows that $D^2v_{s_0}$ is positive
definite on a nonempty open subset of $\Omega_{s_0}$. Hence its
constant rank is $n$, and therefore
\[
D^2v_{s_0}>0
\qquad\text{throughout }\Omega_{s_0}.
\]
Thus $s_0\in\mathcal I_{\mathrm{cvx}}$, and
$\mathcal I_{\mathrm{cvx}}$ is closed.

Since $[0,1]$ is connected and
$\mathcal I_{\mathrm{cvx}}$ is nonempty, open, and closed, we conclude
that
\[
\mathcal I_{\mathrm{cvx}}=[0,1].
\]
In particular,
\[
D^2v_1>0
\qquad\text{in }\Omega.
\]

Finally, $u_1$ is the admissible solution of \eqref{eq:real-main}
with zero Dirichlet data on $\Omega$. By the uniqueness statement in
Proposition~\ref{prop:real-solvability}, it coincides with the
solution $u$ appearing in
Theorem~\ref{thm:real-strict-convexity}. In particular, $u<0$ in
$\Omega$ and $v_1=-\sqrt{-u}$. Consequently,
\[
D^2(-\sqrt{-u})>0
\qquad\text{in }\Omega.
\]
This proves Theorem~\ref{thm:real-strict-convexity}.



\section{Optimality of the square-root exponent}
\label{sec:square-root-optimality}

The strict-convexity result proved above is equivalent to the
strict concavity of $\sqrt{-u}$. It is therefore natural to ask
whether the exponent $1/2$ can be replaced, uniformly over all
smooth uniformly convex domains and admissible solutions, by a
larger exponent. We show that this is impossible.

Moreover, if $0<\gamma\le \frac12$, then
\[
(-u)^\gamma
=
\bigl(\sqrt{-u}\bigr)^{2\gamma}
\]
is concave, since $t\mapsto t^{2\gamma}$ is increasing and concave
on $(0,\infty)$. Thus it remains only to rule out every
$\gamma>\frac12$.

The counterexample is based on a scaling mechanism near a suitably
chosen boundary point. The solution can be forced to vanish at most
quadratically, whereas a nonnegative concave function vanishing at
one endpoint of a segment must grow at least linearly along that
segment. Thus the concavity of $(-u)^\gamma$ would require
$2\gamma\le1$.


We now prove sharpness for the real graphical
$\sigma_2$-curvature equation. The construction begins with a
constant-right-hand-side $\sigma_2$-Hessian equation on domains
approaching the vertex of a convex cone. A small perturbation then
introduces the graphical $T_2$ term, and a final spatial scaling
recovers exactly equation~\eqref{eq:real-main}.

\begin{theorem}
\label{thm:real-square-root-optimality}
Let $n\ge3$. For every $\gamma>1/2$, there exist a bounded, smooth,
uniformly strictly convex domain
$\Omega_\gamma\subset\mathbb R^n$ and a smooth graphical
$\Gamma_2$-admissible solution $u_\gamma<0$ satisfying
\[
\sigma_2(\kappa[u_\gamma])
=
(1+|Du_\gamma|^2)^{-2}
\quad\text{in }\Omega_\gamma,
\qquad
u_\gamma=0
\quad\text{on }\partial\Omega_\gamma,
\]
such that $(-u_\gamma)^\gamma$ is not concave in $\Omega_\gamma$.
Consequently, the exponent $1/2$ in
Theorem~\ref{thm:real-strict-convexity} is optimal among universal
power-concavity exponents.
\end{theorem}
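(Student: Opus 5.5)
We follow the mechanism announced before the statement: the domain will have a boundary point near which $u$ vanishes quadratically along a segment, while $(-u)^\gamma$ vanishes at the endpoint. Fix $\gamma>1/2$. We take a convex cone $\mathcal C=\{x:x_n>\phi(x')\}$ with vertex at the origin, where $\phi$ is $1$-homogeneous, and build the solution near the vertex by blow-up. The natural first model is the constant-right-hand-side $\sigma_2$-Hessian equation $\sigma_2(D^2w)=1$. Its homogeneous solutions $w=-r^2\psi(\theta)$ are of degree two, with $\psi>0$ on the spherical section of the cone and $\psi=0$ on its boundary. Concretely, I would use a quadratic $w=\tfrac12\langle Mx,x\rangle$ minus a multiple of a cone-type defining function, or more safely solve the degree-two homogeneous problem on a truncated cone. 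Along the axis ray $x=te_n$ we then have $-w(te_n)=ct^2$, so $(-w)^\gamma\sim t^{2\gamma}$, and $2\gamma>1$.

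Next we smooth and round the cone. Let $\Omega^\epsilon$ be smooth, uniformly strictly convex domains that agree with a rounded version of $\mathcal C\cap B_1$ and shrink toward the vertex. Consider the admissible solutions of \eqref{eq:real-main} on $\epsilon^{-1}$-rescaled copies; these exist by the continuity method of Proposition~\ref{prop:real-solvability}. Equivalently, after the scaling $u(x)=\lambda^{2}U(x/\lambda)$, the equation $\mathcal E(D^2u,Du)=1$ becomes $\sigma_2(D^2U)+\lambda^2T_2(D^2U)[DU,DU]=1$. As $\lambda\to0$ the $T_2$ term is a small perturbation; this is the ``small perturbation'' step. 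So on a fixed domain $\Omega^*$ with a sharp but smooth, strictly convex, nearly conical boundary point $x_0$, the solutions $U_\lambda$ converge in $C^2$ to the $\sigma_2$-Hessian solution $U_0$. This uses the uniform estimates and uniqueness of Proposition~\ref{prop:real-solvability}, which carry over to the perturbed equation. Undoing the scaling gives a genuine solution of \eqref{eq:real-main} on $\lambda\Omega^*$.

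Concavity fails by a one-dimensional argument. Suppose $h=(-u)^\gamma$ is concave on the domain. Then $h$ is positive inside, $h(x_0)=0$, and restricted to the inward segment from $x_0$ it satisfies $h(x_0+te)\ge (t/t_1)h(x_0+t_1e)$, so it grows at least linearly. Hence $-u(x_0+te)\ge ct^{1/\gamma}$, and $1/\gamma<2$. It therefore suffices to show that $-u(x_0+te)\le Ct^2$ for $t\in(0,t_1)$, with $C$ fixed and $t_1$ so small that $Ct_1^2<c\,t_1^{1/\gamma}$ fails. The main obstacle is exactly this point: at a smooth boundary point Hopf gives linear vanishing, so quadratic control is available only in an intermediate range of scales. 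We therefore exploit the cone: at a genuine vertex of the comparison cone $\mathcal C$, the barrier $U_0$, which is degree-two homogeneous, dominates $-U$ by comparison on the truncated cone, as long as the rounding radius $\rho$ of the vertex is tiny. The quadratic bound $-u\le Ct^2$ then holds on $t\in(K\rho,t_1)$, while the linear lower bound from concavity, anchored at the fixed scale $t_1$, gives $-u\ge c(t/t_1)^{1/\gamma}$. Taking $t=K'\rho$ with $\rho\to0$ yields a contradiction, because the exponent of $\rho$ is $2$ versus $1/\gamma<2$. This requires the comparison constants to be uniform in $\rho$, and I expect the bulk of the work to lie in verifying that uniform barrier comparison, which rests on Step~1-type subsolutions and the admissible comparison principle.
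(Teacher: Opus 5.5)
Your overall strategy matches the paper's. An admissible quadratic that is nonpositive on a cone gives $-u\lesssim|x|^2$ near the vertex. Domains inside the cone have boundary points tending to the vertex. Concavity of $(-u)^\gamma$ along a segment from such a point forces growth like $t^{1/\gamma}$. The rescaling $u(x)=\lambda^2U(x/\lambda)$ turns \eqref{eq:real-main} into $\sigma_2(D^2U)+\lambda^2T_2(D^2U)[DU,DU]=1$.

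\textbf{The convergence step is not justified.} You treat $U_\lambda\to U_0$ as routine, but the estimates of Proposition~\ref{prop:real-solvability} do not ``carry over.'' Their constants depend on the domain geometry and on a positive lower bound for $\partial_{\mathbf n}u$, and both degenerate under $x\mapsto\lambda x$:
\begin{itemize}
\item $\partial(\lambda\Omega^*)$ has curvatures of order $\lambda^{-1}$;
\item $\partial_{\mathbf n}u_\lambda$ is of order $\lambda$;
\item $[D^2u_\lambda]_{C^{\alpha}}=\lambda^{-\alpha}[D^2U_\lambda]_{C^{\alpha}}$.
\end{itemize}
Indeed, bounds of that form that were uniform in $\lambda$ would force $[D^2U_\lambda]_{C^{\alpha_0}}\to0$. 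You would have to redo the whole a priori theory, including the global $C^2$ estimate, for the perturbed equation on the fixed domain.

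The paper avoids this. The $\sigma_2$-Hessian solution $U_0$ is smooth up to the boundary, with $D^2U_0$ in a compact subset of $\Gamma_2$. Hence the zero-Dirichlet linearization $T_1(D^2U_0)^{ij}\partial_{ij}$ is invertible between H\"older spaces. The implicit function theorem in $\varepsilon=\lambda^2$ then gives $U_\varepsilon\to U_0$ in $C^{2,\alpha}$. No graphical existence theory and no uniqueness are needed. One only checks that the rescaled function is graphically $\Gamma_2$-admissible, which holds because its Weingarten representative converges uniformly to $D^2U_0$.

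\textbf{The contradiction step lacks a uniform interior lower bound.} You need $-U\ge c_0>0$ at the anchor point, uniformly in the rounding parameter $\rho$. Nothing in your sketch supplies this; it requires a supersolution at a fixed interior point.
\begin{itemize}
\item In the paper the domains are smoothings of $\operatorname{conv}(\{d_je_n\}\cup B_{r_{\mathrm c}}(e_n))$ inside the cone, and all contain $B_{r_{\mathrm c}/2}(e_n)$. Comparison there with $\bigl(|x-e_n|^2-(r_{\mathrm c}/2)^2\bigr)/\bigl(2\sqrt{\binom n2}\bigr)$ bounds $-U$ below at $e_n$.
\item A domain that agrees with $\mathcal C\cap B_1$ away from the vertex is not uniformly strictly convex, because its lateral boundary is ruled.
\item The cone barrier must be explicit. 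Homogeneous solutions $-r^2\psi(\theta)$ are not needed. The indefinite quadratic $\mathfrak b_{\mathrm{cone}}=\mathfrak c_n(|x'|^2-\tau_n^2x_n^2)$ with $\tau_n^2=(n-2)/4$ lies in $\Gamma_2$ with $\sigma_2=1$. An indefinite matrix in $\Gamma_2$ exists only for $n\ge3$.
\end{itemize}

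\textbf{The order of choices matters.} First fix the segment parameter, then $\rho$, so that the $\sigma_2$-Hessian solution violates concavity strictly at three collinear points. Only then take $\lambda$ small, since a strict inequality at finitely many points survives $C^0$ convergence.

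\textbf{A possible shortcut.} You could instead run the barriers directly on the graphical solutions of \eqref{eq:real-main} given by Proposition~\ref{prop:real-solvability}. Then $T_2(D^2\mathfrak b_{\mathrm{cone}})[D\mathfrak b_{\mathrm{cone}},D\mathfrak b_{\mathrm{cone}}]\ge0$ on the cone, so $\mathfrak b_{\mathrm{cone}}$ stays a subsolution. The ball barrier must be scaled down to remain a strict supersolution. This would make the perturbation step unnecessary.
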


\begin{proof}
Fix $\gamma>1/2$. We divide the construction into three steps.

\medskip
\noindent\textbf{Step 1. A sharp counterexample for the limiting
$\sigma_2$-Hessian equation.}

Set
\[
\tau_n^2=\frac{n-2}{4},
\qquad
\mathfrak c_n=\frac{1}{\sqrt{(n-1)(n-2)}}.
\]
Write $x=(x',x_n)\in\mathbb R^{n-1}\times\mathbb R$ and define the
open convex cone
\begin{equation}
\label{eq:real-sharp-cone}
\mathfrak C_n
=
\left\{
x_n>0,\quad |x'|<\tau_n x_n
\right\}.
\end{equation}
Consider the quadratic function
\[
\mathfrak b_{\mathrm{cone}}(x)
=
\mathfrak c_n
\left(
|x'|^2-\tau_n^2x_n^2
\right).
\]
Then $\mathfrak b_{\mathrm{cone}}<0$ in $\mathfrak C_n$, while
\[
D^2\mathfrak b_{\mathrm{cone}}
=
2\mathfrak c_n
\operatorname{diag}
(1,\ldots,1,-\tau_n^2).
\]
A direct computation gives
\[
\sigma_1(D^2\mathfrak b_{\mathrm{cone}})>0,
\qquad
\sigma_2(D^2\mathfrak b_{\mathrm{cone}})
=
4\mathfrak c_n^2
\left[
\binom{n-1}{2}-(n-1)\tau_n^2
\right]
=1.
\]
Hence
\begin{equation}
\label{eq:real-sharp-cone-solution}
D^2\mathfrak b_{\mathrm{cone}}\in\Gamma_2,
\qquad
\sigma_2(D^2\mathfrak b_{\mathrm{cone}})=1.
\end{equation}
Notice that $D^2\mathfrak b_{\mathrm{cone}}$ is not positive
definite; the construction requires only $\Gamma_2$-admissibility.

Since $e_n\in\mathfrak C_n$, choose $r_{\mathrm c}>0$ such that
$B_{r_{\mathrm c}}(e_n)\Subset\mathfrak C_n$. Let
$d_j\downarrow0$. By smoothing the convex hull
\[
\operatorname{conv}
\left(
\{d_je_n\}\cup B_{r_{\mathrm c}}(e_n)
\right)
\]
inside $\mathfrak C_n$, we obtain bounded smooth uniformly strictly
convex domains $\mathcal O_j$ such that
each $\mathcal O_j$ is uniformly strictly convex, with the convexity constant allowed to depend on $j$, and
\begin{equation}
\label{eq:real-sharp-domains}
B_{r_{\mathrm c}/2}(e_n)
\Subset
\mathcal O_j
\Subset
\mathfrak C_n,
\end{equation}
and boundary points $y_j\in\partial\mathcal O_j$ such that
$y_j\to0$. No uniform lower bound for the boundary curvatures of the
whole sequence is required.

On $\mathcal O_j$, let $\psi_j$ be the unique smooth admissible
solution of
\begin{equation}
\label{eq:real-sharp-hessian}
\begin{cases}
\sigma_2(D^2\psi_j)=1,
& x\in\mathcal O_j,\\
\psi_j=0,
& x\in\partial\mathcal O_j.
\end{cases}
\end{equation}
Existence and uniqueness follow from the classical
$\sigma_2$-Hessian Dirichlet theory \cite{CNS85}. Since
$D^2\psi_j\in\Gamma_2$, one has $\Delta\psi_j>0$, and hence
$\psi_j<0$ in $\mathcal O_j$.

Because $\mathcal O_j\Subset\mathfrak C_n$,
$\mathfrak b_{\mathrm{cone}}<0=\psi_j$ on
$\partial\mathcal O_j$. Together with
\eqref{eq:real-sharp-cone-solution} and
\eqref{eq:real-sharp-hessian}, the comparison principle gives
\[
\mathfrak b_{\mathrm{cone}}
\le
\psi_j
\le0
\qquad\text{in }\mathcal O_j.
\]
Therefore
\begin{equation}
\label{eq:real-sharp-quadratic-upper}
-\psi_j(x)
\le
-\mathfrak b_{\mathrm{cone}}(x)
\le
\mathfrak c_n\tau_n^2x_n^2.
\end{equation}

We next obtain a lower bound at the fixed interior point $e_n$.
On $B_{r_{\mathrm c}/2}(e_n)$ define
\[
\mathfrak b_{\mathrm{ball}}(x)
=
\frac{
|x-e_n|^2-(r_{\mathrm c}/2)^2
}{
2\sqrt{\binom n2}
}.
\]
Then
\[
\sigma_2(D^2\mathfrak b_{\mathrm{ball}})=1,
\qquad
\mathfrak b_{\mathrm{ball}}=0
\quad\text{on }
\partial B_{r_{\mathrm c}/2}(e_n).
\]
Since the ball is compactly contained in $\mathcal O_j$, we have
$\psi_j<0=\mathfrak b_{\mathrm{ball}}$ on its boundary. Another
application of the comparison principle gives
$\psi_j\le\mathfrak b_{\mathrm{ball}}$ in the ball. In particular,
\begin{equation}
\label{eq:real-sharp-interior-lower}
-\psi_j(e_n)
\ge
\frac{r_{\mathrm c}^2}
{8\sqrt{\binom n2}}
=:c_{\mathrm{ball}}>0,
\end{equation}
where $c_{\mathrm{ball}}$ is independent of $j$.

Choose a number $0<\theta<1$ and set
$x_j=(1-\theta)y_j+\theta e_n$. Then
$x_j\in\mathcal O_j$ and $x_j\to\theta e_n$. By
\eqref{eq:real-sharp-quadratic-upper},
\[
\limsup_{j\to\infty}
(-\psi_j(x_j))
\le
\mathfrak c_n\tau_n^2\theta^2.
\]
Hence, for all sufficiently large $j$,
\[
-\psi_j(x_j)
\le
2\mathfrak c_n\tau_n^2\theta^2.
\]
Since $2\gamma-1>0$, we can first choose $\theta>0$ sufficiently
small that
\begin{equation}
\label{eq:real-sharp-scale-gap}
\left(
2\mathfrak c_n\tau_n^2\theta^2
\right)^\gamma
<
\theta c_{\mathrm{ball}}^\gamma.
\end{equation}
We then fix an index $j_0$ sufficiently large so that
\[
(-\psi_{j_0}(x_{j_0}))^\gamma
<
\theta(-\psi_{j_0}(e_n))^\gamma.
\]
Since $\psi_{j_0}(y_{j_0})=0$ and
$x_{j_0}=(1-\theta)y_{j_0}+\theta e_n$, this is the strict reverse
of the concavity inequality for $(-\psi_{j_0})^\gamma$.

From now on set
\[
\mathcal O_0=\mathcal O_{j_0},
\qquad
\psi_0=\psi_{j_0},
\qquad
y_0=y_{j_0},
\qquad
x_0=x_{j_0}.
\]
Thus
\begin{equation}
\label{eq:real-sharp-strict-gap}
(-\psi_0(x_0))^\gamma
<
\theta(-\psi_0(e_n))^\gamma.
\end{equation}

\medskip
\noindent\textbf{Step 2. Perturbation to the graphical
fixed-right-hand-side equation.}

On the fixed domain $\mathcal O_0$, consider for
$\varepsilon\ge0$ the Dirichlet problem
\begin{equation}
\label{eq:real-sharp-perturb}
\begin{cases}
\sigma_2(D^2\psi_\varepsilon)
+
\varepsilon
T_2(D^2\psi_\varepsilon)
[D\psi_\varepsilon,D\psi_\varepsilon]
=1,
& x\in\mathcal O_0,\\
\psi_\varepsilon=0,
& x\in\partial\mathcal O_0.
\end{cases}
\end{equation}
At $\varepsilon=0$ the solution is $\psi_0$.

Define
\[
\mathcal J(\varepsilon,f)
=
\sigma_2(D^2f)
+
\varepsilon T_2(D^2f)[Df,Df]
-1.
\]
The linearization with respect to $f$ at $(0,\psi_0)$ is
\[
D_f\mathcal J(0,\psi_0)[\varphi]
=
T_1(D^2\psi_0)^{ij}\varphi_{ij}.
\]
Since $D^2\psi_0\in\Gamma_2$ on
$\overline{\mathcal O_0}$, the coefficient matrix
$T_1(D^2\psi_0)$ is uniformly positive definite. The maximum
principle, Schauder theory, and the Fredholm alternative show that
the zero-Dirichlet linearization is invertible between the standard
Hölder spaces. The implicit function theorem therefore gives
$\varepsilon_0>0$ and a smooth local branch satisfying
\begin{equation}
\label{eq:real-sharp-IFT}
\psi_\varepsilon
\longrightarrow
\psi_0
\quad\text{in }C^{2,\alpha}(\overline{\mathcal O_0})
\quad\text{as }\varepsilon\downarrow0
\end{equation}
for any fixed Hölder exponent $0<\alpha<1$.

After decreasing $\varepsilon_0$ if necessary,
$D^2\psi_\varepsilon\in\Gamma_2$ and
$\psi_\varepsilon<0$ for $0\le\varepsilon<\varepsilon_0$.
Moreover, the strict finite-point inequality
\eqref{eq:real-sharp-strict-gap} is stable under uniform convergence.
Hence
\begin{equation}
\label{eq:real-sharp-gap-perturbed}
(-\psi_\varepsilon(x_0))^\gamma
<
\theta(-\psi_\varepsilon(e_n))^\gamma
\end{equation}
for every sufficiently small $\varepsilon>0$.

\medskip
\noindent\textbf{Step 3. Scaling back to the original graphical
equation.}

Fix a sufficiently small $\varepsilon>0$ for which
\eqref{eq:real-sharp-gap-perturbed} holds, and define
\[
\Omega_\gamma
=
\sqrt{\varepsilon}\,\mathcal O_0,
\qquad
u_\gamma(x)
=
\varepsilon
\psi_\varepsilon
\left(
\frac{x}{\sqrt{\varepsilon}}
\right),
\quad x\in\Omega_\gamma.
\]
Then
\[
Du_\gamma(x)
=
\sqrt{\varepsilon}\,
D\psi_\varepsilon
\left(
\frac{x}{\sqrt{\varepsilon}}
\right),
\qquad
D^2u_\gamma(x)
=
D^2\psi_\varepsilon
\left(
\frac{x}{\sqrt{\varepsilon}}
\right).
\]
Using \eqref{eq:real-sharp-perturb}, we obtain
\[
\sigma_2(D^2u_\gamma)
+
T_2(D^2u_\gamma)[Du_\gamma,Du_\gamma]
=1.
\]
By the fixed-right-hand-side identity established in
Subsection~\ref{subsubsec:real-ball}, this is exactly
\[
\sigma_2(\kappa[u_\gamma])
=
(1+|Du_\gamma|^2)^{-2}.
\]
Also $u_\gamma=0$ on $\partial\Omega_\gamma$ and
$u_\gamma<0$ in $\Omega_\gamma$.

We finally verify graphical admissibility. By
\eqref{eq:real-sharp-IFT},
$D^2\psi_\varepsilon\to D^2\psi_0$ uniformly, while
$Du_\gamma=O(\sqrt{\varepsilon})$. Hence the symmetric Weingarten
representative
\[
\frac{1}{\sqrt{1+|Du_\gamma|^2}}
(I+Du_\gamma\otimes Du_\gamma)^{-1/2}
D^2u_\gamma
(I+Du_\gamma\otimes Du_\gamma)^{-1/2}
\]
converges uniformly, as $\varepsilon\downarrow0$, to
$D^2\psi_0$. Since the latter takes values in a compact subset of
$\Gamma_2$, the graph of $u_\gamma$ is $\Gamma_2$-admissible when
$\varepsilon$ is sufficiently small.

Set
\[
\widehat y=\sqrt{\varepsilon}\,y_0,
\qquad
\widehat x=\sqrt{\varepsilon}\,x_0,
\qquad
\widehat e=\sqrt{\varepsilon}\,e_n.
\]
Then
$\widehat x=(1-\theta)\widehat y+\theta\widehat e$ and
$u_\gamma(\widehat y)=0$. By
\eqref{eq:real-sharp-gap-perturbed},
\[
(-u_\gamma(\widehat x))^\gamma
=
\varepsilon^\gamma
(-\psi_\varepsilon(x_0))^\gamma
<
\theta\varepsilon^\gamma
(-\psi_\varepsilon(e_n))^\gamma
=
\theta(-u_\gamma(\widehat e))^\gamma.
\]
Therefore
\[
(-u_\gamma(\widehat x))^\gamma
<
(1-\theta)(-u_\gamma(\widehat y))^\gamma
+
\theta(-u_\gamma(\widehat e))^\gamma,
\]
which is the strict reverse of the concavity inequality.
Thus $(-u_\gamma)^\gamma$ is not concave.

Since $\gamma>1/2$ was arbitrary,
Theorem~\ref{thm:real-square-root-optimality} follows.
\end{proof}

\begin{remark}
\label{rem:real-square-root-scaling}
The origin of the exponent $1/2$ is the scale balance in
\eqref{eq:real-sharp-scale-gap}. Near the cone vertex the comparison
argument gives $-u\lesssim\theta^2$, whereas concavity of
$(-u)^\gamma$ along a segment issuing from the boundary would force
$(-u)^\gamma\gtrsim\theta$. Compatibility as $\theta\downarrow0$
therefore requires $2\gamma\le1$.
\end{remark}

\small
\bibliographystyle{alpha}
\bibliography{reference}
\medskip
\noindent
(Shuning Xu) Department of Mathematics, University of Science and Technology of China, Hefei, 230026, Anhui Province, China.\;
Email address: xushuning@mail.ustc.edu.cn
\end{document}